 \newtheorem{thm}{Theorem}[section]
 \newtheorem{lem}[thm]{Lemma}
 \newtheorem{prop}[thm]{Proposition}
 \theoremstyle{definition}
 \newtheorem{defn}[thm]{Definition}
 \theoremstyle{remark}
 \newtheorem{rem}[thm]{Remark}
 \numberwithin{equation}{section}
\title[Nodal Solutions for Dirac equations with singular nonlinearities]{Existence of nodal solutions for Dirac equations with singular nonlinearities}
\author[L. Le Treust]{Lo\"{i}c Le Treust }
\address{CEREMADE (UMR CNRS no. 7534)\\
Universit\'{e} Paris-Dauphine\\
Place De Lattre De Tassigny\\
F75775 Paris Cedex 16 France}
\email{letreust@ceremade.dauphine.fr}
\subjclass{Primary 35B30; Secondary 35B05; 34C11; 35B40; 35B60; 37B55; 35Q75; 81Q60}
\keywords{Nonlinear Dirac equation, Singular nonlinearities, Compactly supported nodal solutions, Shooting method, Relativistic hadron bag model, Fractional bag model, M.I.T. bag model, Exited states, Hamiltonian regularization}
\begin{document}
\begin{abstract}
We prove, by a shooting method, the existence of infinitely many solutions of the form $\psi(x^0,x) = e^{-i\Omega x^0}\chi(x)$ of the nonlinear Dirac equation
\begin{equation*}
	i\underset{\mu=0}{\overset{3}{\sum}} \gamma^\mu \partial_\mu \psi- m\psi - F(\overline{\psi}\psi)\psi = 0
\end{equation*}
where $\Omega>m>0,$ $\chi$ is compactly supported and 
\[
	F(x) = \left\{\begin{array}{ll}
		p|x|^{p-1}	& \text{if}~ |x|>0\\
		0		& \text{if}~ x=0 
	\end{array}\right.
\]
with $p\in(0,1),$ under some restrictions on the parameters $p$ and $\Omega.$ We study also the  behavior of the solutions as $p$ tends to zero to establish the link between these equations and the M.I.T. bag model ones.
\end{abstract}

%%% ----------------------------------------------------------------------
\maketitle
%%% ----------------------------------------------------------------------
%\tableofcontents

%------------------------------------------------------------------- Introduction
\section{Introduction}
We study, in this paper, a relativistic model proposed by Mathieu and Saly \cite{mathieu1984,mathieu1985} that accounts for the internal structure of hadrons, that is how strong interaction forces bind quarks together. Their model and the M.I.T. bag one (see \cite{johnson} and the references therein) have been introduced to approximate the quantum chromodynamics model and to get the confinement of the quarks.

We will look for localized solutions of the nonlinear Dirac equation:
\begin{equation}\label{equation:stati}
i\underset{\mu=0}{\overset{3}{\sum}} \gamma^\mu \partial_\mu \psi- m\psi - F(\overline{\psi}\psi)\psi = 0.
\end{equation}
The notations are the followings: $m>0,$ $\psi :\mathbb{R}^4\rightarrow \mathbb{C}^4$, $\partial_\mu \psi =  \frac{\partial}{\partial x^\mu}$, $0\leq \mu \leq 3$, where we used Einstein's convention for summation over $\mu.$ We write $\overline{\psi}\psi = (\gamma^0\psi,\psi)$ where $(.,.)$ is the usual scalar product and $\gamma^\mu$ are the $4\times4$ Pauli-Dirac matrices \cite{Thaller1992}:
\[\gamma^0 = \left(\begin{array}{cc}I & 0\\ 0&-I\end{array}\right) \mbox{~and~} \gamma^k = \left(\begin{array}{cc}0 & \sigma^k\\ -\sigma^k&0\end{array}\right)\mbox{ for } k=1,2,3,\]
with
\[
	\sigma^1 = \left(\begin{array}{cc}0 & 1\\ 1&0\end{array}\right), \sigma^2 = \left(\begin{array}{cc}0 & -i\\ i&0\end{array}\right), \sigma^3 = \left(\begin{array}{cc}1 & 0\\ 0&-1\end{array}\right).
\]
The function $F:\mathbb{R}\rightarrow \mathbb{R}$ is defined by
\[
	F(x) = \left\{\begin{array}{ll} p|x|^{p-1}& \mbox{if~} |x|>0\\ 0& \mbox{if~} x=0\end{array}\right.
\] 
with $p\in(0,1).$ The solutions are sought among stationary states 
\begin{equation}\label{solutionstationnaire}
	\psi(x^0,x) = e^{-i\Omega x^0}\chi(x)
\end{equation}
where $x = (x^1,x^2,x^3)\in\mathbb{R}^3,$ $\Omega>m$ and  $\chi$ is solution of:
\begin{equation}\label{equation:stati2}
	i\underset{k = 1}{\overset{3}{\sum}} \gamma^k \partial_k \chi +\Omega \gamma^0 \chi- m\chi - 	F(\overline{\chi}\chi)\chi = 0.
\end{equation}
Following \cite{mathieu1984,mathieu1985}, we will search the solutions among functions of the form:
\begin{equation}\label{soler}
\chi(x)=\left(\begin{array}{c}
v(r)\left(\begin{array}{c}1\\0\end{array}\right)\\
iu(r)\left(\begin{array}{c}\cos\theta \\ \sin\theta e^{i\Phi}\end{array}\right)\\
\end{array}\right)
\end{equation}
where  $(r,\theta,\Phi)$ are the spherical coordinates of $x$ in $\mathbb{R}^3$ and $\chi$ is localized, that is :
\[
	\underset{r\rightarrow \infty}{\lim}(u,v) = 0.
\] 
Equation (\ref{equation:stati2}) then becomes a non-autonomous system of ordinary differential equations
\begin{equation}  \label{equation:uv} 
\left\{ \begin{array}{lcl}
  u' + \displaystyle\frac{2u}{r} &= & v(-F(v^2-u^2)-(m-\Omega)) \\
  v'  & = & u(-F(v^2-u^2)-(m+\Omega)).
\end{array} \right.
\end{equation}
Following Mathieu and Saly \cite{mathieu1984,mathieu1985}, we assume that $u$ is  zero at zero and we consider the following Cauchy problem for $x\in \mathbb{R}^+$:
\begin{equation}\label{systequation:uv}
\left\{ \begin{array}{l}
	(\ref{equation:uv})\\
	(u(0),v(0)) = (0,x).
	\end{array}\right.
\end{equation}
We can choose $x$ nonnegative without loss of generality thanks to the symmetry of the equations. For the sake of notation simplicity, we will not write the $p$ dependence unless it is necessary. For instance, we write $F$, $(u,v)$, \eqref{systequation:uv},\dots  \;instead of $F_p,$ $(u_p,v_p)$, \eqref{systequation:uv}$_p,$ \dots

Equation \eqref{equation:stati} has been introduced by  Mathieu and Saly \cite{mathieu1984,mathieu1985} to model the confinement of the relativistic quarks. Their model is called the fractional bag model. They observed numerically that the solutions are compactly supported. 

Balabane, Cazenave and Vazquez \cite{balabane1990} proved rigorously the existence of a ground state for a more general class of nonlinearities $F$ by a shooting method. Moreover, they obtained a necessary and sufficient condition on $F$ for the ground state solution to be compactly supported. The shooting method has already been used to get infinitely many solutions of a nonlinear Dirac equation in a regular setting by Balabane, Cazenave, Douady and Merle \cite{BCDM88} (see also the references therein). 

The main problems we have to face here occur on the set $\{|u|=|v|\}$ because the nonlinearity $F$ is singular at $0$. Since, Balabane, Cazenave and Vazquez \cite{balabane1990} studied the ground state problem,  the trajectories of the solutions they found do not cross this set. Nevertheless, in this paper, we have to consider solutions of this type to get infinitely many solutions. 

Thus, we have to weaken the notion of solution since the Cauchy problem
\begin{equation}\label{systemequation:diag}
\left\{\begin{array}{ll}
	\eqref{equation:uv}\\
	u(R)=v(R)=x
\end{array}\right.
\end{equation}
has no regular solution defined  in a neighborhood of $R$ for $R>0$ and $x\ne 0.$ 
\begin{defn} \label{def:extended}
	Let $0\leq R<R'.$ A function $w\in C^0(R,R')$ is a solution of a system of ordinary differential equations (E) in the extended sense if there exist at most a finite number $n$ of real number $R<R_1<\dots<R_n<R'$ such that $w$ is of class $\mathcal{C}^1$ on $(R,R')\backslash \{R_1,\dots,R_n\}$ and satisfies the equations of system (E) on $(R,R')\backslash \{R_1,\dots,R_n\}.$
\end{defn}

From now on, we will consider solution of this type (see also  \cite{CL55}). Notice that the nonlinearity $F$ allows the zero function to be solution of \eqref{equation:stati}. Thanks to definition \ref{def:extended}, we can thus extend by zero all the solutions which hit zero.

Since we want to use a shooting method, local existence and uniqueness are very important points. But, the main O.D.E. theorems \cite{CL55} fail to show local uniqueness for problem \eqref{systemequation:diag} and existence is not a trivial point. To overcome this, we have to introduce a regularized problem whose solutions satisfy some key qualitative properties similar to the ones of the solutions of the original system of equations \eqref{equation:uv}. The idea consists in introducing an approach system which is hamiltonian near the set $\{|u|=|v|\}$ so that we get local existence and uniqueness. Nevertheless, the solutions of the regularized problem are singular and they are only solutions in the extended sense of definition \ref{def:extended}.

Once this regularization is done, we can adapt  to our framework the shooting method of Balabane, Dolbeault and Ounaies \cite{balabane2003} which established the existence of infinitely many compactly supported solutions for a sub-linear elliptic equation with any given number of nodes. The problems given by the lack of regularity of the nonlinearity in zero occur when the solutions of their system of equations hit zero. Here, these difficulties arise on the bigger set $\{|u|=|v|\}$. Indeed, our main contribution is to deal with the shooting method of \cite{balabane2003} in this singular framework. 

Mathieu \cite{mathieu1985} has already found numerical excited state solutions. But, in this paper, we provide the first rigorous proof of their existence under some restrictions on $p$ and $\Omega.$  Mathieu and Saly \cite{mathieu1984} have also derived relations between these solutions and the M.I.T. bag model ones. Here, we prove rigorously that the ground state solutions of the fractional models converge to the ground state solution of the M.I.T. bag model as $p$ tends to $0$. Nevertheless, we also show that the limits of the sequence of the excited state solutions are not solutions of the M.I.T. bag model equations.

Let us now state our results:
\begin{thm}\label{resultshooting}
There are $\overline{p}\in(0,1)$ and for every $p \in (0,\overline{p})$, a constant $\Omega_p>m$ such that if $\ \Omega> \Omega_p$, there exists an unbounded increasing sequence $(x_k)_{k\in\mathbb{N}}$ of initial data such that for any $k\in \mathbb{N}$,  the Cauchy problem $\displaystyle{(\ref{systequation:uv})}$ has a compactly supported solution which crosses the set $\{(u,0)|u\ne 0\}$ exactly $k$ times. 
\end{thm}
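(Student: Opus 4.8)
\noindent\emph{Proof strategy.} The plan is a shooting argument in the initial datum $x=v(0)$ (so $u(0)=0$), following Balabane--Dolbeault--Ounaies \cite{balabane2003}; the whole difficulty is to push the argument through the singular set $\mathcal{S}=\{|u|=|v|\}=\{v^2-u^2=0\}$, on which $F$ fails to be locally Lipschitz. Away from $\mathcal{S}$ the vector field in \eqref{equation:uv} is smooth, so the only delicate points are $r=0$ --- handled in the usual way by a contraction argument on $r\mapsto(u,v)$, which yields a unique $\mathcal{C}^1$ local solution of \eqref{systequation:uv} --- and $\mathcal{S}$. To cross $\mathcal{S}$ I would use the regularized system announced in the introduction: in a thin tubular neighbourhood of $\mathcal{S}$ one replaces \eqref{equation:uv} by the Hamiltonian flow of
\[
H(u,v)=-\tfrac12\,\Phi(v^2-u^2)-\tfrac{m}{2}(v^2-u^2)+\tfrac{\Omega}{2}(u^2+v^2),\qquad \Phi(s)=\int_0^s F,
\]
which, after straightening the direction transverse to $\mathcal{S}$, can be integrated by quadrature; this gives local existence and uniqueness of a solution through $\mathcal{S}$ in the extended sense of Definition~\ref{def:extended}, and one checks that this solution retains the qualitative features used below. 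A direct computation gives, along any extended solution,
\[
\frac{d}{dr}H(u,v)=-\frac{2u^2}{r}\bigl(F(v^2-u^2)+m+\Omega\bigr)\le 0,
\]
so $H$ is non-increasing; since $\Omega>m$, $H$ is coercive and bounded below, hence $(u,v)$ stays bounded, the extended solution is global on $[0,\infty)$, and $H(r)$ converges as $r\to\infty$.

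Next I would describe the phase portrait, partitioning the $(u,v)$-plane by $\{u=0\}$, $\{v=0\}$, $\mathcal{S}$ and a level curve of $H$, reading off the monotonicity of $u$ and of $v$ in each region and the admissible region-to-region transitions, in the spirit of \cite{balabane1990,balabane2003}. The structural dichotomy to prove is: for every $x>0$ the trajectory either (a) reaches the origin at some finite $r$ --- and is then continued by $0$ into a compactly supported solution of \eqref{systequation:uv} --- or (b) satisfies $\liminf_{r\to\infty}(u^2+v^2)>0$. Two facts drive this. First, $\{H<0\}$ is a bounded region having the origin on its boundary and contained in $\{v^2>u^2\}$ (and shrinking to $\{0\}$ as $\Omega\to\infty$); since $\{(u,0)\mid u\ne0\}\subset\{H\ge0\}$ and $H$ is non-increasing, any trajectory that once enters $\{H<0\}$ can never again meet $\{(u,0)\mid u\ne0\}$. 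Second --- and this is where $p<1$, and ultimately the restriction on $p$, enters --- inside this region the singular term $\Phi(v^2-u^2)\sim|v^2-u^2|^{p}$ forces the trajectory to hit the origin after a \emph{finite} additional $r$-length; this is exactly what upgrades ``converges to the origin'' to ``compactly supported'' and rules out intermediate asymptotic behaviour. Under alternative (a) the number $N(x)$ of crossings of $\{(u,0)\mid u\ne0\}$ is then a finite integer.

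It then remains to study $x\mapsto N(x)$. Continuous dependence of the extended solutions on $x$ --- available across $\mathcal{S}$ precisely because of the uniqueness obtained in the regularization step --- shows that $N$ is locally constant on $\{x:\text{(a) holds}\}$ and cannot skip integers, so that between two initial data realizing $k$ and $k'>k$ crossings, every intermediate value is realized by a compactly supported solution. For $x$ small, $(0,x)\in\{H<0\}$, so by the previous step the trajectory is absorbed without ever meeting $\{(u,0)\mid u\ne0\}$, i.e. $N(x)=0$. For $x$ large one shows --- this is the quantitative heart of the argument, and where the hypothesis $\Omega>\Omega_p$ is used --- that the trajectory performs an arbitrarily large number of half-turns around the origin before being absorbed, so that $N$ attains arbitrarily large values. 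Taking, for each $k$, $x_k$ to be the threshold value $\inf\{x:\text{(a) holds and }N(x)\ge k\}$ and checking, by a limiting argument as in \cite{balabane2003}, that at $x_k$ alternative (a) still holds and the count is exactly $k$, one obtains the required unbounded increasing sequence $(x_k)_{k\in\mathbb{N}}$.

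The main obstacle is $\mathcal{S}=\{|u|=|v|\}$ itself: every ingredient of the classical shooting scheme --- local uniqueness, monotonicity of the energy, counting of crossings and, above all, continuous dependence on the shooting parameter --- has to be re-established for extended solutions that genuinely cross $\{|u|=|v|\}$, where \eqref{equation:uv} has no regular solution. Designing the Hamiltonian regularization so that it restores well-posedness without destroying the other qualitative properties --- and tracking how small $p$ and how large $\Omega$ must be for all the estimates to close --- is where the real work lies; the remainder is a by-now-standard adaptation of \cite{balabane2003}.
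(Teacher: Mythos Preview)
Your outline has the right architecture --- regularize near $\mathcal S$, exploit the monotonicity of $H$, count crossings, show the winding number tends to infinity as $x\to\infty$ --- but the central picture is inverted, and this breaks the argument.

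The claim that a trajectory entering $\{H<0\}$ ``hit[s] the origin after a finite additional $r$-length'' is false. Since $H(0,0)=0$ and $H$ is nonincreasing along the flow, any trajectory with $H<0$ at some radius is trapped in $\{H<0\}$ forever and can \emph{never} reach the origin; these are exactly the solutions in the sets $A_k$ of the paper (Definition~\ref{setsinitialdata} and Remark~\ref{rem:shootingsets}), and they are \emph{not} compactly supported. In particular, for small $x$ one has $H(0,x)<0$ and $x\in A_0$, so your alternative (a) does not hold there. The compactly supported solutions are the exceptional ones --- the sets $I_k$ --- that reach the origin while remaining in $\{H\ge 0\}$, with $H\to 0$. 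They are obtained not as generic members of an open set but as boundary points: one shows the $A_k$ are open and bounded, and that $\sup A_k\in I_{k-1}\cup I_k$, $\sup I_k\in I_k$ (Lemma~\ref{lem:topological}), whence by induction $I_k\neq\emptyset$. Your proposed threshold $x_k=\inf\{x:\text{(a) holds and }N(x)\ge k\}$ does not select these solutions, because your description of (a) is wrong. Getting the correct picture also requires the trapping region of Proposition~\ref{trapping} and the behaviour-at-infinity dichotomy of Lemma~\ref{comportementinfini}, neither of which is the simple ``(a) or (b)'' you state; indeed there is a third possible behaviour ($N_x=\infty$, $H\to 0$, never hitting zero) that has to be handled.

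There is also a structural step you omit. The paper does \emph{not} use the regularization merely as a local device to define a flow through $\mathcal S$ and then run the shooting directly on \eqref{equation:uv}. It carries out the entire shooting argument on the $\epsilon$-regularized system \eqref{equation:uvreg} for each fixed $\epsilon>0$, producing for every $k$ a solution $(u^\epsilon,v^\epsilon)$ with $k$ nodes, and only then passes to the limit $\epsilon\to 0$ (Section~\ref{section:theo}). That limit is nontrivial: it needs $\epsilon$-uniform $W^{1,s}$ bounds on $u^2,v^2,uv$ (Lemma~\ref{sobolev}), a compactness/extraction argument, the reconstruction of $(\overline u,\overline v)$ on $[0,r_0]$ via continuity in the initial datum, and a contour-integral computation to check that the node count $k$ survives the limit. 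Without this step you have solutions only of the regularized problem, not of \eqref{systequation:uv}.
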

The following theorem establishes the close link between the fractional model and the M.I.T bag one.
\begin{thm}\label{theo:mitlimit}
There is $\overline{\Omega}>m$ and for $\Omega>\overline{\Omega},$ for $k\in\mathbb{N}$,  a finite number of points $R^1,\dots,R^l$ with $l\leq 2k+1$, $(u_0,v_0)\in \mathcal{C}^1(\mathbb{R^+}\backslash\{R^1,\dots,R^l\})\cap L^{\infty}(\mathbb{R}^+)$ and a decreasing sequence $(p_n)$ converging to zero such that :
\begin{enumerate}[(i)]
	\item \label{theoextra:nodal} $v^2_0-u^2_0$ is continuous on $\mathbb{R}^+$,  positive on $[0,R^1)$ and on exactly $k$ intervals $(R^i,R^{i+1}),$ 
	\item \label{theoextra:convergence} $(u_{p_n},v_{p_n})$ converges to $(u_0,v_0)$ uniformly on every compact interval of $\{|v_0^2-u^2_0|>0\}$.
	\item \label{theoextra:diraclibre} $(u_0,v_0)$ is a solution of the free Dirac equation on $[0,R^l]\backslash \{R^1,\dots,R^l\}$:
		\[\left\{\begin{array}{rl}
			u'+\frac{2u}{r}=&v(\Omega-m)\\
			v'=&-u(\Omega+m),
		\end{array}\right.\] discontinuous in $R^1,\dots,R^l$.
	\item \label{theoextra:annulation} $(u_0,v_0)\equiv 0$ on $[R^l,\infty)$ and $(v_0^2-u_0^2)(R^i)=0,$
	
	\end{enumerate}
	where $(u_p,v_p)$ is  the solution of (\ref{systequation:uv}$)_p$ found by theorem \ref{resultshooting} with $k$ nodes.
\end{thm}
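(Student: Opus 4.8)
The plan is to obtain Theorem~\ref{theo:mitlimit} as a limit, as $p\to0$, of the excited-state solutions constructed in Theorem~\ref{resultshooting}. First I would fix $k\in\mathbb{N}$ and, for each small $p$, let $(u_p,v_p)$ be the solution of \eqref{systequation:uv}$_p$ with exactly $k$ nodes and initial datum $x_{k,p}$ given by Theorem~\ref{resultshooting}. The first task is to get $p$-uniform control: I expect that the proof of Theorem~\ref{resultshooting} already yields, via an energy/Hamiltonian functional for \eqref{equation:uv}, a bound $\|(u_p,v_p)\|_{L^\infty(\mathbb{R}^+)}\le C$ and a uniform bound $R_p\le \overline R$ on the size of the support, both independent of $p$ once $\Omega>\overline\Omega$ for a suitable $\overline\Omega\ge\Omega_p$. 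I would also record the uniform-in-$p$ locations of the nodes and of the crossings of $\{|u|=|v|\}$: there are at most $k$ nodes and, correspondingly, at most $2k+1$ points $R^i_p$ where $v_p^2-u_p^2$ vanishes (entering and leaving each lobe where the nonlinearity is active), so $l\le 2k+1$.

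Next I would extract convergence. Away from $\{|v_0^2-u_0^2|=0\}$ the term $F_p(v_p^2-u_p^2)=p|v_p^2-u_p^2|^{p-1}$ is bounded (indeed tends to $0$) locally uniformly, so on any compact subinterval of $\{|v_0^2-u_0^2|>0\}$ the right-hand sides of \eqref{equation:uv} are uniformly bounded; Ascoli--Arzel\`a plus a diagonal argument over an exhausting sequence of such compacts gives a subsequence $p_n\downarrow0$ and a limit $(u_0,v_0)$ with $(u_{p_n},v_{p_n})\to(u_0,v_0)$ locally uniformly there, establishing \eqref{theoextra:convergence}. Passing to the limit in the integrated form of \eqref{equation:uv} on each such interval, and using $F_{p_n}\to0$ pointwise on $\{v^2\ne u^2\}$, shows that $(u_0,v_0)$ solves the free Dirac system of \eqref{theoextra:diraclibre} on $[0,R^l]\setminus\{R^1,\dots,R^l\}$, where $R^1<\dots<R^l$ are the limit points of the crossing points $R^i_{p_n}$ (after passing to a further subsequence so these limits exist). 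The continuity of $v_0^2-u_0^2$ on all of $\mathbb{R}^+$, its vanishing at each $R^i$, and the sign pattern in \eqref{theoextra:nodal} should follow from the corresponding qualitative facts for $(u_{p_n},v_{p_n})$ proved en route to Theorem~\ref{resultshooting} (namely that $v_p^2-u_p^2$ is continuous, is positive on the initial lobe and on exactly $k$ later lobes, and vanishes exactly at the crossing points), together with the uniform bounds that prevent these features from escaping to infinity or degenerating. Property \eqref{theoextra:annulation}, that $(u_0,v_0)\equiv0$ on $[R^l,\infty)$, comes from the uniform compact-support bound $R_{p_n}\le\overline R$ and the fact that each $(u_{p_n},v_{p_n})$ vanishes identically past its support; the vanishing $(v_0^2-u_0^2)(R^i)=0$ is where the solution transitions between lobes and is inherited from the extended-solution structure.

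The main obstacle I anticipate is controlling the behavior of $(u_{p_n},v_{p_n})$ \emph{near} the crossing set $\{|u|=|v|\}$, i.e.\ near the points $R^i_{p_n}$, and showing that nothing pathological happens there in the limit: one must rule out that the solutions spend a non-vanishing ``time'' or accumulate oscillations near $\{v^2=u^2\}$ as $p_n\to0$, so that the number of lobes is genuinely preserved in the limit and the limit points $R^i$ are distinct (hence $l$ finite and $\le 2k+1$). This requires the regularized Hamiltonian system and its qualitative analysis from the setup of Theorem~\ref{resultshooting}: near $\{|u|=|v|\}$ the flow is governed by an (almost) Hamiltonian vector field whose level sets force the trajectory to cross transversally, with a crossing ``time'' that one can bound below uniformly in $p$, or else to stay on one side. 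A secondary delicate point is the regularity claim $(u_0,v_0)\in\mathcal C^1(\mathbb{R}^+\setminus\{R^1,\dots,R^l\})$ with genuine discontinuities at the $R^i$: continuity across a crossing point would force $v_0^2-u_0^2$ to vanish to higher order and, combined with the free Dirac equation, typically contradicts the node count, so the discontinuity must be extracted from a uniform lower bound on the jump of the derivative of $v_{p_n}^2-u_{p_n}^2$ at $R^i_{p_n}$. Once these uniform near-crossing estimates are in hand, assembling \eqref{theoextra:nodal}--\eqref{theoextra:annulation} is routine bookkeeping across the finitely many lobes.
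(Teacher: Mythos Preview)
Your overall architecture---uniform $L^\infty$ and support bounds, compactness, pass to the limit in the ODE, track the crossing points---matches the paper's, but there is a genuine logical gap in how you extract the limit, and the paper resolves it by a different (and simpler) device than the one you propose.

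The gap is a circularity: you want to apply Ascoli to $(u_{p},v_{p})$ on compact subsets of $\{|v_0^2-u_0^2|>0\}$, but $v_0^2-u_0^2$ is not yet defined at that stage, and you cannot get equicontinuity of $(u_{p},v_{p})$ on a $p$-independent set because the right-hand side of \eqref{equation:uv} contains $F_p(v_p^2-u_p^2)=p|v_p^2-u_p^2|^{p-1}$, which blows up near $\{|u_p|=|v_p|\}$ and that set moves with $p$. The paper's key observation is that the \emph{combination} $w_p:=v_p^2-u_p^2$ satisfies
\[
\frac{d}{dr}(v_p^2-u_p^2)=4\Bigl(\frac{u_p^2}{r}-\Omega\,u_pv_p\Bigr),
\]
in which the singular term $F_p$ has cancelled completely. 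Once you know $(u_p,v_p)$ is bounded in $L^\infty$ uniformly in $p$ (the paper proves this by a short contradiction argument using the explicit form of $H_p$, not as a by-product of Theorem~\ref{resultshooting}), this identity gives global equicontinuity of $v_p^2-u_p^2$ on $[r_0,\overline R]$. Ascoli then produces a continuous limit $g$ on all of $[r_0,\overline R]$, and only afterwards do you go back and apply Ascoli to $(u_{p_n},v_{p_n})$ on compacts of $\{|g|>0\}$. This is also what gives the continuity claim in \eqref{theoextra:nodal} directly, rather than via the vaguer route you sketch.

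Your proposed mechanism for showing the lobes do not collapse (uniform transversal crossing time from the Hamiltonian regularization) is not what the paper uses; indeed the regularization plays no role in Section~\ref{section:mit}. For $i$ even, between two consecutive crossings $R^i_p<R^{i+1}_p$ of $\{|u|=|v|\}$ there is a point $\widetilde R^i_p$ where $u_pv_p=0$, and there the solution sits on the axis inside $\{H_p\ge0\}$, which forces $v_p^2-u_p^2\ge (\Omega-m)/2$ for $n$ large (since $E_0^p\to\sqrt{\Omega-m}$). Passing to the limit gives $g(\widetilde R^i_0)\ge(\Omega-m)/2$, so $R^i_0<\widetilde R^i_0<R^{i+1}_0$ and the interval is nonempty; an angle argument using $\theta'\ge\Theta$ and point~\eqref{lem:point6} of Lemma~\ref{energielevel} then shows $g>0$ on all of $(R^i_0,R^{i+1}_0)$. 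Finally, the interval $[0,r_0]$ is handled separately by the continuity of the map $x\mapsto(u_x,v_x)(r_0)$ for the unregularized problem, together with boundedness of the initial data $x_{p_n}$ (from $v_p^2(r_0)-u_p^2(r_0)\ge\alpha x_p^2$ in Lemma~\ref{estimationangulaire}); you should not try to run the Ascoli argument down to $r=0$.
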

Let us notice that $(u_0,v_0)$ is discontinuous at each bound of the $k$ intervals of point (\ref{theoextra:nodal}).  In the case $k=0$, the solution $(u_0,v_0)$ is the ground state of the M.I.T. bag model as Mathieu and Saly derived in \cite{mathieu1984}. Nevertheless, the other nodal solutions that we get, are different from those derived by Mathieu in \cite{mathieu1985} by lack of continuity.

In section \ref{section:preliminaryresult}, we define the hamiltonian regularization.  We will prove that the qualitative properties we need do not depend on the regularization parameter and that the solutions of the regularized system of equations locally exist and are unique. In section \ref{section:shoot}, we prove the existence of compactly supported solutions to the regularized problem by the shooting method.  We finish the proof of theorem \ref{resultshooting} in section \ref{section:theo}. Finally, we study the relation between the fractional model and the M.I.T. bag one in section \ref{section:mit}.
%
%----------------------------------------------------------------------------------------------------------------------------------
%----------------------------------------------------------------------------------------------------------------------------------
%
%-------------------------------------------------------------------Local existence and uniqueness
\section{Notations and preliminary results} \label{section:preliminaryresult}

%-----------------------------------------------------------------------------------------------------------------------------------
%---------------------------------------------------------------------------------------Energie
In this section, we fix $p\in(0,1/2)$ and $\Omega>m.$ Following \cite{balabane1990}, we define the continuous functions:
\[
	H:\mathbb{R}^2\rightarrow\mathbb{R}~\text{and}~H_\epsilon:\mathbb{R}^2\rightarrow\mathbb{R}
\]
by
\begin{equation*}
	H(u,v) = -\frac{1}{2}(v^2-u^2)| v^2-u^2|^{p-1} +\frac{\Omega-m}{2}v^2+\frac{\Omega+m}{2}u^2
\end{equation*}
and
\begin{align*}
	\lefteqn{H_{\epsilon}(u,v) 	= H(u,v) - (\Omega -\epsilon)v^2}\\
	&= -(v^2-u^2)\left(\frac{1}{2}~|v^2-u^2|^{p-1} +\frac{\Omega+m}{2}\right) + \epsilon v^2
\end{align*}
for $\epsilon \in [0,m)$ and $(u,v)\in\mathbb{R}^2.$

%-----------------------------------------------------------------------------------------------------------------------------------
%---------------------------------------------------------------------------------------Niveau d'enegie
These functions will be of constant use in this paper. In the following lemma, we study some of their properties.
\begin{lem}
\label{energielevel}
Let $\epsilon \in [0,m)$, we have that:
\begin{enumerate}[(i)] 
	\item there exist two positive constants $A$ and $B$ such that:
			\[H(u,v) \geq A(u^2+v^2)-B \mbox{~for all~} (u,v) \in \mathbb{R}^2,\]
	\item the set $H^{-1}(-\infty,C)$ is bounded for $C\in \mathbb{R},$
	\item $H^{-1}_{\epsilon}(\{0\})$ is a connected unbounded set in $\{|u|<|v|\}\cup\{0\}$, such that 		\[H^{-1}_{\epsilon}(\{0\})\cap H^{-1}(\{0\}) = \{0\},\]
		 for every $\epsilon >0$,
	\item $H^{-1}_{0}(\{0\}) = \{|u|=|v|\},$
	\item there are two functions $\gamma\mapsto C_\gamma$ and $\gamma\mapsto D_\gamma$ defined for $\gamma\in (0,+\infty)$ such that for all $(u,v) \in \mathbb{R}^2$, $\gamma>0$:
			\[H(u,v) \geq \gamma \Rightarrow \left\{
				\begin{tabular}{c}
					$C_{\gamma}H(u,v)\geq u^2$\\
					$u^2+v^2\geq D_{\gamma}$
				\end{tabular}\right.
			\]
			$\gamma \mapsto C_\gamma$ is nonincreasing and $\lim_{\gamma \rightarrow 0} C_\gamma = \infty$,
	\item  \label{lem:point6}there are $\overline{\theta}\in(0,\pi/4)$ and $\overline{v}>0$ such that  for every $(u,v)$ which satisfy $H(u,v)\geq 0$ and $|u|\leq \tan(\overline{\theta})|v|$, then we have $|v|\geq \overline{v}.$ For any $\overline{\theta}\in(0,\pi/4)$, there are $P\in(0,1/2)$ and $\overline{v}>0$ such that this point remains true for all $p\in(0,P)$. 
\end{enumerate}

\end{lem}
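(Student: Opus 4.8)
The plan is to treat the six items essentially independently, exploiting the explicit polynomial-plus-fractional structure of $H$. For item (i), I would write $H(u,v)$ in terms of $s = v^2-u^2$ and $t = v^2+u^2$: the quadratic part is $\frac{\Omega-m}{2}v^2 + \frac{\Omega+m}{2}u^2 \geq \frac{\Omega-m}{2}(u^2+v^2)$ since $\Omega > m$, while the singular term $-\frac12 s|s|^{p-1} = -\frac12 \operatorname{sign}(s)|s|^{p}$ is bounded below on bounded sets and, more importantly, grows only like $|s|^{p} \leq t^{p}$ with $p < 1$, so it is dominated by the quadratic term up to an additive constant; a Young-type inequality $\frac12 t^{p} \leq \frac{\Omega-m}{4} t + B$ finishes it with $A = \frac{\Omega-m}{4}$. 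Item (ii) is immediate from (i): $H(u,v) < C$ forces $u^2+v^2 < (B+C)/A$. Item (v) is a refinement of the same computation: from (i) we already get $u^2 + v^2 \leq (H+B)/A$, and on $\{H \geq \gamma\}$ we have $H + B \leq H(1 + B/\gamma)$, giving the second inequality with $D_\gamma = A\gamma/(\ldots)$-type constant and showing it is bounded away from $0$; for $C_\gamma H \geq u^2$ I would again use $H \geq \frac{\Omega+m}{2}u^2 - \frac12|s|^p$ and absorb the fractional term using that on $\{H \geq \gamma\}$ the point is bounded away from the origin, so $|s|^p$ is controlled by $u^2 + v^2$ which is controlled by $H$; tracking constants gives $C_\gamma$ nonincreasing with $C_\gamma \to \infty$ as $\gamma \to 0$ (this blow-up reflects that near the origin $H$ is $O(|s|^p)$, which vanishes slower than $u^2$).

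For items (iii) and (iv), the key observation is that $H_\epsilon(u,v) = -s\bigl(\frac12|s|^{p-1} + \frac{\Omega+m}{2}\bigr) + \epsilon v^2$ and the bracket is strictly positive for $s \neq 0$. When $\epsilon = 0$ this shows $H_0 = 0 \iff s = 0 \iff |u| = |v|$, which is item (iv). For $\epsilon > 0$: on $\{|u| \geq |v|\} \setminus \{0\}$ we have $s \leq 0$ so $-s(\cdots) \geq 0$ and $\epsilon v^2 > 0$ unless $v = 0$; and if $v = 0, u \neq 0$ then $s = -u^2 < 0$ and the expression is $u^2(\frac12 u^{2(p-1)} + \frac{\Omega+m}{2}) > 0$. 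Hence $H_\epsilon^{-1}(\{0\}) \subset \{|u| < |v|\} \cup \{0\}$. Connectedness and unboundedness I would get by solving $H_\epsilon = 0$ for $u$ as a function of $v$: for each $v$ with $|v|$ large the equation $s(\frac12|s|^{p-1} + \frac{\Omega+m}{2}) = \epsilon v^2$ has, for $s \in (0, v^2)$, a unique solution $s = s(v^2)$ because the left side is continuous, vanishes at $s=0$, and is eventually increasing; this gives a continuous curve $v \mapsto (u(v), v)$ filling out the set, hence connected and unbounded. The intersection $H_\epsilon^{-1}(\{0\}) \cap H^{-1}(\{0\})$: on this set $H = 0$ and $H_\epsilon = H - (\Omega - \epsilon)v^2 = 0$ force $(\Omega-\epsilon)v^2 = 0$, so $v = 0$, and then $H = 0$ with $v = 0$ gives $\frac12 u^2|u|^{2(p-1)} + \frac{\Omega+m}{2}u^2 = 0$, so $u = 0$.

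For item (vi), on the cone $|u| \leq \tan(\overline\theta)|v|$ with $\overline\theta < \pi/4$ we have $u^2 \leq \tan^2(\overline\theta) v^2$, hence $s = v^2 - u^2 \geq (1 - \tan^2\overline\theta)v^2 > 0$; then $H(u,v) = -\frac12 s^{p} + \frac{\Omega-m}{2}v^2 + \frac{\Omega+m}{2}u^2 \leq -\frac12\bigl((1-\tan^2\overline\theta)v^2\bigr)^{p} + \frac{\Omega+m}{2}(1 + \tan^2\overline\theta)v^2$, wait — I need a lower bound on $|v|$ from $H \geq 0$, so I use instead that $H \geq 0$ combined with $s^{p} \geq$ (something); more cleanly, $0 \leq H \leq \frac{\Omega+m}{2}(1+\tan^2\overline\theta)v^2 - \frac12((1-\tan^2\overline\theta)v^2)^{p}$ forces $v^{2-2p} \geq \frac{(1-\tan^2\overline\theta)^{p}}{(\Omega+m)(1+\tan^2\overline\theta)}$, giving the desired $\overline v > 0$ since $2 - 2p > 0$. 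For the uniformity in $p$: fixing $\overline\theta$, the constant obtained is $\overline v(p) = \bigl(\frac{(1-\tan^2\overline\theta)^{p}}{(\Omega+m)(1+\tan^2\overline\theta)}\bigr)^{1/(2-2p)}$, a continuous positive function of $p$ on $[0, 1/2]$, hence bounded below by a positive constant on any $[0, P]$; one checks the bound does not degenerate as $p \to 0$. I expect item (vi), and especially chasing the $p$-uniform constant, together with the connectedness argument in (iii), to be the fiddly parts, but none involves a genuine obstacle — everything reduces to monotonicity of one-variable functions and elementary inequalities once the substitution $s = v^2 - u^2$ is in place.
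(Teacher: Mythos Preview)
Your proposal is correct, and for items (i)--(iv) it matches the paper's approach (which in any case mostly defers to \cite{balabane1990}). There is some expository confusion in your treatment of (v) --- you derive an \emph{upper} bound $u^2+v^2\le (H+B)/A$ and then refer to it as ``the second inequality,'' which in the lemma is the \emph{lower} bound $u^2+v^2\ge D_\gamma$; the lower bound follows simply from continuity of $H$ and $H(0,0)=0$, as you seem to intend --- but the mathematics is sound.

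For (v) and (vi) your route is genuinely different from the paper's. The paper parameterizes each level set $H^{-1}(\{\gamma\})$ (respectively $H^{-1}(\{0\})$) as a graph $v\mapsto u(v)$ via the implicit function theorem, locates the critical points of $u(v)$ (respectively of $u(v)/v$) explicitly, and reads off $C_\gamma$ and $\overline\theta,\overline v$ from the extremal values. You instead use the coarse coercivity bound from (i) to get $u^2\le (1+B/\gamma)A^{-1}H$ directly, and for (vi) you bound $H$ above on the cone $|u|\le\tan(\overline\theta)|v|$ by $-\tfrac12((1-\tan^2\overline\theta)v^2)^p + C'v^2$ and solve the resulting one-variable inequality. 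Your approach is shorter and gives an explicit $\overline v(p,\overline\theta)$ whose positive limit as $p\to0$ is transparent, which handles the $p$-uniformity in (vi) cleanly; the paper's approach yields sharper constants (the exact maximal $u^2/\gamma$ on each level set, the exact tangent angle $\overline\theta$) and more geometric information about the level sets, though that extra precision is not used elsewhere in the paper.
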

We define $E_0=(\Omega-m)^\frac{1}{2(1-p)}= \sup\{v| \exists u:~ H(u,v)=0\}$.
%%%%%%%%%%%%%%%%%%%%%%%%%%%%%%%%%%%%%%%%%%%%%%%%%%%%%%%%%%%%%
\begin{figure}
\includegraphics[scale=0.9]{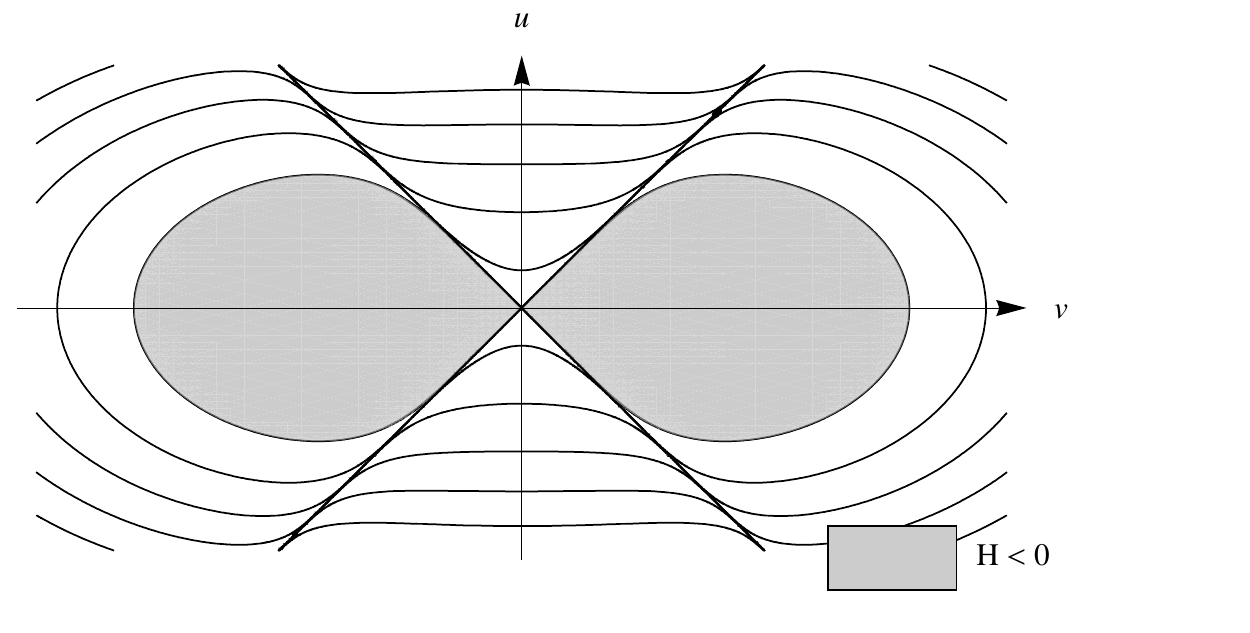}
\caption{Energy levels}
\label{graph:level}
\end{figure}
The proof is a straightforward calculation and is postponed in the appendix. 
%
%
%----------------------------------------------------------------------------------------------------------------------------------
%----------------------------------------------------------Region de regularisation

\subsection{The regularized problem}\label{subsection:regul}
$F$ is so singular at zero that the main O.D.E. theorems \cite{CL55} fail to show existence and local uniqueness for problem \eqref{systemequation:diag}. To overcome this, we  introduce a regularized problem.

Let $E_1$ be a positive constant that will be fixed later. We define for $\epsilon \in (0,m)$ the sets (figure \ref{graph:regul}):
\begin{equation*}
\begin{aligned}
	\mathcal{R}^1_{\epsilon}=&\left\{(u,v)\in \mathbb{R}^2:~|v-u| \leq E_1, H_{\epsilon}(u,v)\geq 0,H_{\epsilon}(v,u)\geq 0\right\}\\
	\mathcal{R}^2_{\epsilon}=&\left\{(u,v)\in \mathbb{R}^2:~|v-u| \leq E_1/2, H_{\frac{\epsilon}{2}}(u,v)\geq 0,H_{\frac{\epsilon}{2}}(v,u)\geq 0\right\}.\\
\end{aligned}
\end{equation*}
Let us remark that by lemma \ref{energielevel}, we have
\[
	\mathcal{R}^2_{\epsilon}\subset\mathcal{R}^1_{\epsilon}\subset H^{-1}((0,+\infty))\cup \{0\}
\]
and
\[
	\cap_{\epsilon >0}\mathcal{R}^1_{\epsilon} = \{|u|=|v|\}.
\]
%
%%%%%%%%%%%%%%%%%%%%%%%%%%%%%%%%%%%%%%%%%%%%%%%%%%%%%%%%%
\begin{figure}
\includegraphics[scale=0.8]{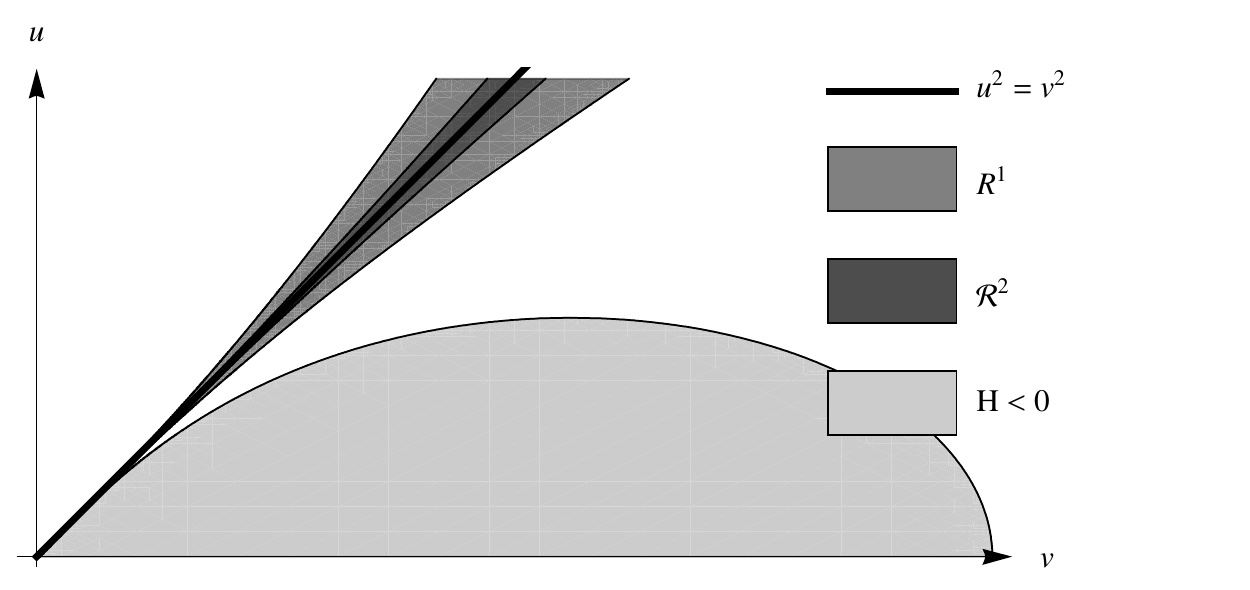}
\caption{Regularization area}
\label{graph:regul}
\end{figure}
Let $\varphi_{\epsilon}:\mathbb{R}^2\backslash\{(0,0)\}\rightarrow [0,1]$ be a smooth function such that 
\[\varphi_{\epsilon} \equiv \left\{
\begin{array}{l}
	0 \mbox{~on~}  \mathcal{R}^2_{\epsilon}\\
	1 \mbox{~on~}  (\mathcal{R}^1_{\epsilon})^c\\

\end{array} \right.
\]
and
\[
	\varphi_\epsilon(u,v) = \varphi_\epsilon(|u|,|v|)=\varphi_\epsilon(|v|,|u|)
\]
for any $(u,v)$ in $\mathbb{R}^2\backslash\{(0,0)\}.$
We will then study the following system of equations:
\begin{equation}  \label{equation:uvreg}
	\left\{ \begin{array}{lcr}
  u' + \displaystyle\frac{2u}{r}\varphi{}_\epsilon(u,v) &= & v(-F(v^2-u^2)-(m-\Omega)) \\
  v'  & = & u(-F(v^2-u^2)-(m+\Omega)).
\end{array} \right.
\end{equation}
Let us remark that there is no regular solution of the Cauchy problem:
\begin{equation}\label{systequationdiag}
	\left\{\begin{array}{ll}
		\eqref{equation:uvreg}\\
		u(R)=v(R)=x
	\end{array}\right.
\end{equation}
where $R>0$ and $x\in\mathbb{R}^*.$  Nevertheless, near the set $\{|u|=|v|\}$ the system of equations is autonomous and hamiltonian. This will allow us to get existence and local uniqueness while keeping the qualitative properties of the solutions to problem \eqref{systequation:uv} that we need for the shooting method.

% Notice that for every $x\geq a$, $(u_x,v_x)(r_0)$ does not belong to $\mathcal{R}^1_{\epsilon}$ by lemma \ref{estimationangulaire}.

%
%
%
%-----------------------------------------------------------------------------------------------------------------------------------------------------------------------------------------------
\subsection{Qualitative results}
We assume in this part, that $(u,v)$ is a solution in the extended sense of equations \eqref{equation:uvreg} defined on an interval $I$ and $\epsilon>0.$
%---------------------------------------------------------------------------------------Variation de l'Žnergie.
As in \cite{balabane1990}, we show, in the next lemma, that the function $H(u,v)$ is nonincreasing.
\begin{lem}
\label{deriveeenergie}
We have for $r\in I$:
\[\frac{d}{dr}H(u,v)(r)  = -\frac{2u^2\varphi{}(u,v)}{r}(p| v^2-u^2|^{p-1}+(m+\Omega)),\]
so $r\mapsto H(u,v)$ is nonincreasing. 
\end{lem}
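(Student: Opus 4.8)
The plan is to compute $\frac{d}{dr}H(u,v)$ directly by the chain rule, using the expression
\[
	H(u,v) = -\tfrac{1}{2}(v^2-u^2)|v^2-u^2|^{p-1} +\tfrac{\Omega-m}{2}v^2+\tfrac{\Omega+m}{2}u^2,
\]
and then substituting the equations \eqref{equation:uvreg}. First I would work on any subinterval of $I$ where $(u,v)$ is $\mathcal{C}^1$ and $v^2-u^2\neq 0$, so that $H$ is differentiable there. Writing $s = v^2-u^2$, one has $-\tfrac12 s|s|^{p-1} = -\tfrac12 \operatorname{sgn}(s)|s|^{p}$, whose derivative with respect to $s$ is $-\tfrac{p}{2}|s|^{p-1}$; hence $\partial_u H = u\big(p|s|^{p-1} + (\Omega+m)\big) = u\big(p|s|^{p-1} + (\Omega+m)\big)$ and $\partial_v H = -v\big(p|s|^{p-1}\big) + (\Omega-m)v = v\big(-p|s|^{p-1} + (\Omega-m)\big)$. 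Note $F(s) = p|s|^{p-1}$ when $s\ne 0$, so $\partial_u H = u(F(s)+\Omega+m)$ and $\partial_v H = v(-F(s)+\Omega - m) = -v(F(s) - \Omega + m) = -v(F(s)+m-\Omega)$.

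Next I substitute the two ODEs. From \eqref{equation:uvreg}, $v' = u(-F(s)-(m+\Omega)) = -u(F(s)+m+\Omega)$ and $u' = v(-F(s)-(m-\Omega)) - \tfrac{2u}{r}\varphi_\epsilon(u,v)$. Then
\[
	\frac{d}{dr}H(u,v) = \partial_u H \cdot u' + \partial_v H \cdot v'.
\]
The term $\partial_v H \cdot v' = \big(-v(F(s)+m-\Omega)\big)\big(-u(F(s)+m+\Omega)\big) = uv(F(s)+m-\Omega)(F(s)+m+\Omega)$. The term $\partial_u H \cdot u' = u(F(s)+m+\Omega)\big[v(-F(s)-(m-\Omega)) - \tfrac{2u}{r}\varphi_\epsilon\big] = -uv(F(s)+m+\Omega)(F(s)+m-\Omega) - \tfrac{2u^2}{r}\varphi_\epsilon(F(s)+m+\Omega)$. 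Adding, the $uv$-terms cancel exactly, leaving
\[
	\frac{d}{dr}H(u,v) = -\frac{2u^2\varphi_\epsilon(u,v)}{r}\big(F(v^2-u^2)+(m+\Omega)\big) = -\frac{2u^2\varphi_\epsilon(u,v)}{r}\big(p|v^2-u^2|^{p-1}+(m+\Omega)\big),
\]
which is the claimed formula. Since $p, m, \Omega > 0$, $\varphi_\epsilon \geq 0$ and $r>0$ on $I$, the right-hand side is $\leq 0$, so $r\mapsto H(u,v)(r)$ is nonincreasing on each such subinterval.

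Finally I would handle the two exceptional sets. On the set $\{v^2-u^2 = 0\}$, i.e.\ $|u|=|v|$: here $H$ is still well-defined and in fact, by Lemma \ref{energielevel}(iv) and the surrounding discussion, this set lies in $\cap_\epsilon \mathcal{R}^1_\epsilon$, so $\varphi_\epsilon \equiv 0$ on a neighborhood of it — the system is hamiltonian there and the formula still reads $\frac{d}{dr}H = 0$, consistent with the stated expression (interpreting it as $0$ when $\varphi_\epsilon$ vanishes; the apparent singularity of $|v^2-u^2|^{p-1}$ is killed by $u^2\varphi_\epsilon \equiv 0$). Since a solution in the extended sense is $\mathcal{C}^1$ off a finite set $\{R_1,\dots,R_n\}$ and continuous everywhere, $H(u,v)$ is continuous on all of $I$ and piecewise $\mathcal{C}^1$ with nonpositive derivative, hence nonincreasing on $I$. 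The only mild subtlety — and the one point worth stating carefully rather than the computation itself — is this bookkeeping near $\{|u|=|v|\}$, ensuring the product $u^2\varphi_\epsilon(u,v)|v^2-u^2|^{p-1}$ is genuinely bounded (indeed zero near that set because $\varphi_\epsilon$ vanishes there), so that the identity and the monotonicity extend across it.
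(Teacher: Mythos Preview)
Your proof is correct and follows essentially the same approach as the paper: a direct chain-rule computation using the ODEs \eqref{equation:uvreg}, observing that the cross terms cancel and only the $\tfrac{2u}{r}\varphi_\epsilon$ contribution survives. The paper organizes the cancellation slightly more compactly by recognizing that $-p(v'v-u'u)|v^2-u^2|^{p-1} + v'v(\Omega-m) + u'u(\Omega+m) = v'(u'+\tfrac{2u}{r}\varphi_\epsilon) - u'v'$, but the content is identical; your additional care about the set $\{|u|=|v|\}$ and the finitely many non-$\mathcal{C}^1$ points is a welcome explicitness that the paper leaves implicit.
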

\begin{proof}
We have for $r\in I$:
\begin{align*}
\frac{d}{dr}H(u,v)(r)	&= -p(v'v-u'u)| v^2-u^2|^{p-1} + v'v(\Omega-m)+ u'u(\Omega+m)\\
					&= v'(u' + \displaystyle\frac{2u}{r}\varphi{}(u,v)) -u'v'\\
					&= v'\left(\displaystyle\frac{2u}{r}\varphi{}(u,v)\right)\\
					&=-\frac{2u^2\varphi{}(u,v)}{r}(p| v^2-u^2|^{p-1}+(m+\Omega)).
\end{align*}
\end{proof}
%
%%-----------------------------------------------------------------------------------------------------------------------------------
%-----------------------------Variation angulaire.
In the next lemma, we study the speed of rotation of the trajectory of $(u,v)$ around zero.
\begin{lem}
\label{deriveeangulaire}
Let $\theta(r) = \int_0^r\frac{d}{dr}\arctan(\frac{u}{v})(s)ds$. We have for $r\in I$:
\[
	\theta(r)' =  \frac{pH(u,v) + (1-p)(v^2(\Omega-m))+u^2(\Omega+m))- \displaystyle 2uv\varphi(u,v)/r}{u^2+v^2} .
\]
If $H(u,v)(r)\geq 0$, we get moreover 
\[
	\theta(r)' \geq (1-p)(\Omega-m)-\varphi(u,v)/r.
\]
\end{lem}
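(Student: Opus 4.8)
The statement is a direct computation, so the plan is simply to differentiate and substitute. On any subinterval of $I$ on which $(u,v)$ is of class $\mathcal{C}^1$ and $v$ does not vanish,
\[
\frac{d}{dr}\arctan\!\Big(\tfrac{u}{v}\Big) \;=\; \frac{1}{1+(u/v)^2}\,\frac{u'v-uv'}{v^2} \;=\; \frac{u'v-uv'}{u^2+v^2},
\]
and the right-hand side is continuous and bounded across the finitely many points of $I$ where $v=0$ or where $(u,v)$ fails to be $\mathcal{C}^1$ (on $I$ the solution never meets the origin, since an extended solution is stopped and continued by zero as soon as it hits $0$); this is the meaning of $\theta'$, and it records the angular velocity of the trajectory around the origin. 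Thus the first step is to compute the numerator $u'v-uv'$ by inserting $u'$ and $v'$ from \eqref{equation:uvreg}.

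Carrying this out, the terms involving $F$ combine as $-(v^2-u^2)F(v^2-u^2)$, the remaining linear-in-$u,v$ contributions give $(\Omega-m)v^2+(\Omega+m)u^2$, and the regularizing term $\tfrac{2u}{r}\varphi_\epsilon$ contributes only $-\tfrac{2uv}{r}\varphi_\epsilon(u,v)$ (it enters $u'v$ but not $uv'$), so that
\[
u'v-uv' \;=\; (\Omega-m)v^2+(\Omega+m)u^2 - (v^2-u^2)F(v^2-u^2) - \frac{2uv}{r}\varphi_\epsilon(u,v).
\]
The only step that is not purely elementary is rewriting the singular term: since $F(v^2-u^2)=p\,|v^2-u^2|^{p-1}$, the definition of $H$ rearranges to $(v^2-u^2)|v^2-u^2|^{p-1} = (\Omega-m)v^2+(\Omega+m)u^2-2H(u,v)$, whence $(v^2-u^2)F(v^2-u^2)$ is $p$ times that right-hand side. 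Substituting this back, collecting the $H(u,v)$ term together with the $(1-p)$-weighted quadratic part, and dividing by $u^2+v^2>0$, one obtains the stated expression for $\theta'(r)$.

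For the lower bound, assume $H(u,v)(r)\geq 0$. Then the $H(u,v)$ contribution to the numerator is nonnegative and may be discarded. Since $m>0$, one has $(\Omega-m)v^2+(\Omega+m)u^2 = (\Omega-m)(u^2+v^2)+2mu^2 \geq (\Omega-m)(u^2+v^2)$, so the $(1-p)$-term is bounded below by $(1-p)(\Omega-m)(u^2+v^2)$. Finally $2uv\leq u^2+v^2$ together with $\varphi_\epsilon\in[0,1]$ and $r>0$ gives $-\tfrac{2uv}{r}\varphi_\epsilon(u,v) \geq -\tfrac{\varphi_\epsilon(u,v)}{r}(u^2+v^2)$. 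Dividing the resulting estimate of the numerator by $u^2+v^2$ yields $\theta'(r)\geq (1-p)(\Omega-m)-\varphi_\epsilon(u,v)/r$. There is no genuine obstacle here beyond careful bookkeeping with the absolute values hidden in $F$ and $H$ (which is what makes the one algebraic identity above slightly delicate) and the remark explaining why $\theta'$ is well defined at the zeros of $v$.
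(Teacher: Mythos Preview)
Your proof is correct and follows exactly the same route as the paper: substitute the regularized system into $u'v-uv'$, rewrite the singular term via the identity $(v^2-u^2)|v^2-u^2|^{p-1}=(\Omega-m)v^2+(\Omega+m)u^2-2H(u,v)$, and for the lower bound drop the nonnegative $H$-contribution and use $(\Omega+m)u^2+(\Omega-m)v^2\ge(\Omega-m)(u^2+v^2)$ together with $2|uv|\le u^2+v^2$. (Note that your own substitution in fact produces a coefficient $2p$ in front of $H(u,v)$ rather than the $p$ printed in the statement; this appears to be a harmless typo in the paper, since only the sign of that term is used in the subsequent estimate.)
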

\begin{proof}
We have for $r\in I$:
\begin{align*}
&\theta(r)'= \frac{u'v-v'u}{u^2+v^2}\\
		&= \frac{ -p(v^2-u^2)| v^2-u^2|^{p-1} + v^2(\Omega-m))+u^2(\Omega+m)-  2uv\varphi(u,v)/r }{u^2+v^2}\\
		&= \frac{pH(u,v) + (1-p)(v^2(\Omega-m))+u^2(\Omega+m))-  2uv\varphi(u,v)/r}{u^2+v^2}.
\end{align*}
If we assume moreover that $H(u,v)(r)\geq 0$, we get
\begin{align*}
		\lefteqn{\theta(r)'\geq \frac{(1-p)(\Omega-m)(v^2+u^2)- 2uv\varphi(u,v)/r}{u^2+v^2}}\\
		&\geq (1-p)(\Omega-m)-\varphi(u,v)/r.
\end{align*}
\end{proof}

%------------------------------------------------------------------------------------------------------------------------------------------------------------------------------
%----------------------------------------------------------Existence and local uniqueness results
\subsection{Existence and local uniqueness results}
Let  $\epsilon \in (0,m)$. 
%
%
%----------------------------------------------------------Existence et unicitŽ locale.
We show, in the next lemma, that problem \eqref{systequationdiag} has a unique local solution.
\begin{lem}
\label{existanceunicite}
Consider the initial value problem for $r_1\geq0,$ $(u_1,v_1)\in\mathbb{R}^2\backslash \{0\}$:
\begin{equation}\label{systequationlocal:uvreglem}
\left\{
\begin{array}{l}
 	(\ref{equation:uvreg}) \\
 	(u,v)(r_1)  = (u_1,v_1),
\end{array}
\right.
\end{equation} 
such that $ u_1 = 0$ if $r_1=0$.
Then, there exists a unique local solution.
\end{lem}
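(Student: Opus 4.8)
The plan is to distinguish three regimes according to the position of $(r_1,u_1,v_1)$, the regularization being needed only on the singular set $\{|u|=|v|\}$. \emph{If $r_1>0$ and $|u_1|\ne|v_1|$:} then $v_1^2-u_1^2\ne0$, so $F$ is of class $\mathcal C^1$ near that value; since $(u_1,v_1)\ne(0,0)$ the function $\varphi_\epsilon$ is smooth near $(u_1,v_1)$, and $r\mapsto 1/r$ is smooth near $r_1>0$. Hence the right-hand side of \eqref{equation:uvreg} is $\mathcal C^1$ on a neighbourhood of $(r_1,u_1,v_1)$, and the Cauchy--Lipschitz theorem provides a unique local $\mathcal C^1$ solution. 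This already covers, in particular, the case $r_1>0$, $u_1=0$, $v_1\ne0$.

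\emph{If $r_1=0$ (hence $u_1=0$, $v_1\ne0$):} one first notes that $(0,v_1)\notin\mathcal R^1_\epsilon$, since
\[
	H_\epsilon(0,v_1)=-v_1^2\Bigl(\tfrac12|v_1|^{2(p-1)}+\tfrac{\Omega+m}{2}-\epsilon\Bigr)<0
\]
because $\epsilon<m<\Omega+m$; consequently $\varphi_\epsilon\equiv1$ on a neighbourhood of $(0,v_1)$. There $v^2-u^2$ remains close to $v_1^2\ne0$, so $F$ is $\mathcal C^1$, and the only difficulty is the singular coefficient $2/r$ at $r=0$. I would recast the problem as the coupled integral equations
\begin{align*}
	v(r)&=v_1+\int_0^r u\,\bigl(-F(v^2-u^2)-(m+\Omega)\bigr)\,ds,\\
	u(r)&=\frac1{r^2}\int_0^r s^2\,v\,\bigl(-F(v^2-u^2)-(m-\Omega)\bigr)\,ds,
\end{align*}
the boundary contribution at $s=0$ vanishing since we look for a bounded $u$ with $u(0)=0$, and run a Banach fixed point argument on a small ball of $\mathcal C^0([0,\delta];\mathbb R^2)$: for $\delta$ small enough the associated map is a contraction, the decisive estimate being $\bigl\|\,r^{-2}\int_0^r s^2h(s)\,ds\,\bigr\|_\infty\le\tfrac{\delta}{3}\|h\|_\infty$. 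The fixed point is continuous, vanishes at $0$, and is $\mathcal C^1$ on $(0,\delta]$.

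\emph{If $r_1>0$ and $|u_1|=|v_1|\ne0$} --- the core of the lemma, where $F$ is singular. Set $w=v^2-u^2$, so that $w(r_1)=0$; differentiating \eqref{equation:uvreg} gives
\[
	w'=-4\Omega uv+\frac{4u^2}{r}\,\varphi_\epsilon(u,v).
\]
At $r_1$ this is non-zero: if $u_1=v_1$ then $(u_1,v_1)$ lies in the interior of $\mathcal R^2_\epsilon$ (on the diagonal the defining inequalities of $\mathcal R^2_\epsilon$ hold strictly), hence $\varphi_\epsilon\equiv0$ near it and $w'(r_1)=-4\Omega u_1^2<0$; if $u_1=-v_1$ then $w'(r_1)=4\Omega u_1^2+\tfrac{4u_1^2}{r_1}\varphi_\epsilon(u_1,v_1)\ge4\Omega u_1^2>0$ regardless of the value $\varphi_\epsilon(u_1,v_1)\in[0,1]$. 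Thus the flow is transversal to $\{w=0\}$, and $w$ can be taken as the new independent variable: the system governing $(u,v,r)$ as functions of $w$ has a right-hand side equal to $|w|^{p-1}$ times a function locally Lipschitz in $(u,v,r)$ plus a function locally Lipschitz in $(u,v,r)$, the singular factor coming from $F(w)=p|w|^{p-1}$. Since $p-1\in(-1,0)$ this singularity is integrable, hence removable: the change of variable $\sigma=\tfrac1p\,\mathrm{sgn}(w)|w|^{p}$, for which $dw/d\sigma=(p|\sigma|)^{(1-p)/p}$ is continuous and vanishes at $\sigma=0$, turns the $w$-system into a system for $(u,v,r)$ in the variable $\sigma$ whose right-hand side is continuous in $\sigma$ and locally Lipschitz in $(u,v,r)$ near $(u_1,v_1,r_1)$. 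The Cauchy--Lipschitz theorem then yields a unique solution with $(u,v,r)(0)=(u_1,v_1,r_1)$; a short computation shows that the constraint $v^2-u^2-w\equiv0$ is propagated, and since $dr/dw=1/w'$ does not vanish, $r(w)$ is invertible, so $(u,v)(r):=(u,v)(w(r))$ is continuous, is $\mathcal C^1$ for $r\ne r_1$ and solves \eqref{equation:uvreg} there --- i.e.\ it is a solution in the sense of Definition \ref{def:extended}. Uniqueness follows by reparametrizing an arbitrary such solution through $(u_1,v_1)$ at $r_1$ by $w$ (near $r_1$ its $w$-derivative stays close to the non-zero value $w'(r_1)$, so $w$ is locally strictly monotone) and identifying it with the unique solution of the $\sigma$-system. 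When $\varphi_\epsilon\equiv0$ near $(u_1,v_1)$ one may instead bypass the substitution and use the conservation of $H$ (Lemma \ref{deriveeenergie}) to reduce directly to a scalar equation $w'=f(w)$ with $f$ continuous and $f(w(r_1))\ne0$.

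The delicate point is of course this last regime: $F$ blows up on $\{|u|=|v|\}$, so neither Picard iteration nor Peano's theorem applies to \eqref{equation:uvreg} as such. Local existence and uniqueness are recovered because the regularization $\varphi_\epsilon$ keeps the flow transversal to $\{|u|=|v|\}$ --- through the sign of $w'$ --- while the blow-up $|w|^{p-1}$ of $F$, with $p-1\in(-1,0)$, is integrable and is absorbed by the Hölder change of time variable $\sigma$.
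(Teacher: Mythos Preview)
Your proof is correct, and in the first two regimes it coincides with the paper's argument (Cauchy--Lipschitz when $r_1>0$ and $|u_1|\ne|v_1|$; a contraction on integral equations when $r_1=0$, exactly as sketched in \cite{balabane1990}). In the singular regime $|u_1|=|v_1|$, however, your route is genuinely different from the paper's. The paper exploits that $\varphi_\epsilon\equiv0$ near $\{|u|=|v|\}$, so the system becomes the \emph{autonomous Hamiltonian} system \eqref{equation:uvautonome}: it picks a regular point $(u_2,v_2)$ on the same level set $H^{-1}(H(u_1,v_1))$ inside $\mathcal R^2_\epsilon$, solves from there by ordinary Cauchy--Lipschitz, and uses conservation of $H$ together with the angular-velocity bound $\theta'\ge(1-p)(\Omega-m)>0$ from Lemma~\ref{deriveeangulaire} to show the trajectory reaches $(u_1,v_1)$ in finite time; a time-translation then gives the solution through $(u_1,v_1)$ at $r_1$. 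Uniqueness comes from the fact that any extended solution stays on the compact level curve and winds with the same angular speed. Your approach is more analytic: you observe that $w=v^2-u^2$ satisfies $w'=-4\Omega uv+\tfrac{4u^2}{r}\varphi_\epsilon$, which is nonsingular and nonzero at $r_1$, and then absorb the integrable blow-up $|w|^{p-1}$ of $F$ by the H\"older reparametrization $\sigma=\tfrac1p\operatorname{sgn}(w)|w|^p$, reducing to a standard Picard--Lindel\"of problem in $\sigma$. The paper's argument is more geometric and explains \emph{why} the regularization was designed to be Hamiltonian near the diagonal; yours is more self-contained and makes the role of the transversality $w'(r_1)\ne0$ explicit. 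One small remark: by the symmetry $\varphi_\epsilon(u,v)=\varphi_\epsilon(|u|,|v|)$, the anti-diagonal $u_1=-v_1$ is also mapped into the interior of $\mathcal R^2_\epsilon$, so $\varphi_\epsilon\equiv0$ there as well; your separate treatment of that case is correct but not actually needed.
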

\begin{proof}
The Cauchy-Lipschitz theorem shows this result provided that $r_1 \neq 0$ and $|u_1|\neq |v_1|.$ The contraction mapping argument sketched in \cite{balabane1990} ensures the result for $r_1=0.$ So the point is when $|u_1|= |v_1|.$ In this case, the system (\ref{equation:uvreg}) reduces itself into the autonomous hamiltonian system of equations: 
\begin{equation}  \label{equation:uvautonome} 
\left\{ \begin{array}{ll}
  u'  =& v(-p| v^2-u^2|^{p-1}-(m-\Omega)) \\
  v'  =& u(-p| v^2-u^2|^{p-1}-(m+\Omega)).
\end{array} \right.
\end{equation}
We assume:
\[
	u_1=v_1>0.
\]
We choose 
\[
	(u_2,v_2)\in H^{-1}(H(u_1,v_1))\cap \{0<u<v\}\cap \mathcal{R}^2_\epsilon.
\]
Then, the solution $(u,v)$ to problem
\[
	\left\{
		\begin{array}{l}
			\eqref{equation:uvautonome}\\
			(u(0),v(0)) = (u_2,v_2)
		\end{array}
	\right.
\]
is well-defined, $\mathcal{C}^1$ on a maximal interval $[0,r_2)$, locally unique and 
\[
	H(u(r),v(r))=H(u_1,v_1)
\]
 for each $r\in[0,r_2)$ by lemma \ref{deriveeenergie}. Since the set
\[
	\{(u,v),~H(u,v) = H(u_1,v_1) \}
\]
is compact and 
\[
	\theta(r)'= \frac{pH(u,v) + (1-p)(v^2(\Omega-m))+u^2(\Omega+m))}{u^2+v^2}\geq (1-p)(\Omega-m)>0
\]
 for $r\in[0,r_2)$ by lemma \ref{deriveeangulaire}, we get that
\[
	\underset{r\rightarrow r_2}{\lim}~(u,v)(r)=( u_1,v_1).
\]
We define then $(\tilde{u},\tilde{v})$ by
\[
	(\tilde{u},\tilde{v})(r) = (u,v)(r-r_1+r_2)
\]
for any $r\in[r_1-r_2,r_1],$ it solves problem \eqref{systequationlocal:uvreglem} on the interval $[r_1-r_2,r_1].$ We proved the existence and the local uniqueness on one side. The same argument works as well for the remaining cases.
\end{proof}
%
%
%
%----------------------------------------------------------Existence globale des solutions.
We will now show the existence and the uniqueness of the maximal solution in the extended sense of the regularized problem.
\begin{lem}
\label{exitenceglobale}
For each $x>0$, there is a unique solution $(u,v)$ of the problem 
\begin{equation}\label{systequation:uvreg}
	\left\{\begin{array}{l}
		\eqref{equation:uvreg}\\
		(u(0),v(0))=(0,x)
	\end{array}\right.
\end{equation}
on an interval $[0,R_x)$ with $R_x\in(0,+\infty]$ such that 
\[
	(u(r),v(r))\ne (0,0),~\forall~r\in[0,R_x),
\]
and $R_x$ is maximal for this property.
 There is a positive constant $C$ such that $(u,v)$ satisfies
\[  
	\underset{r\in[0,R_x)}{\sup}\sqrt{u^2(r)+v^2(r)}\leq C.
\]
	 Moreover, if $R_x<\infty$, then we have
\[
	\underset{r\rightarrow R_x}{\lim}~(u(r),v(r)) = (0,0).
\]
\end{lem}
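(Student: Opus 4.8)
The plan is to construct the maximal solution by gluing together the local solutions provided by Lemma \ref{existanceunicite}, then use the energy functional $H$ from Lemma \ref{energielevel} to get the a priori bound, and finally analyze the behavior at a finite lifespan. First, starting from the initial datum $(0,x)$ with $x>0$, Lemma \ref{existanceunicite} (in the case $r_1=0$) gives a unique local solution on some interval $[0,\delta)$. As long as $(u,v)\ne(0,0)$, we may re-apply Lemma \ref{existanceunicite} at any point of the trajectory — distinguishing the generic case $|u|\ne|v|$ (Cauchy--Lipschitz) from the diagonal case $|u|=|v|$ (the autonomous Hamiltonian argument) — to extend the solution. A standard Zorn/supremum argument on the set of extended-sense solutions agreeing near $0$ yields a maximal interval $[0,R_x)$ with $R_x\in(0,+\infty]$; the uniqueness on $[0,R_x)$ follows because local uniqueness holds at \emph{every} point where $(u,v)\ne 0$, and the only possible non-uniqueness (on a set $\{|u|=|v|\}$) has been excluded by Lemma \ref{existanceunicite}. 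Here $R_x$ is defined to be maximal subject to $(u,v)$ never vanishing on $[0,R_x)$.

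Next, the a priori bound. By Lemma \ref{deriveeenergie}, $r\mapsto H(u(r),v(r))$ is nonincreasing on $[0,R_x)$, so $H(u(r),v(r))\le H(0,x)=H(u(0),v(0))$ for all $r$. By Lemma \ref{energielevel}(i), $H(u,v)\ge A(u^2+v^2)-B$, hence
\[
	A\bigl(u^2(r)+v^2(r)\bigr)-B \le H(u(r),v(r)) \le H(0,x),
\]
which gives $u^2(r)+v^2(r)\le \bigl(H(0,x)+B\bigr)/A =: C^2$, uniformly in $r\in[0,R_x)$. This is the desired bound.

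Finally, suppose $R_x<\infty$; we must show $(u,v)(r)\to(0,0)$ as $r\to R_x$. Because of the uniform bound just obtained, the right-hand side of \eqref{equation:uvreg} is bounded on any region staying away from the vanishing set and away from $r=0$ — note $R_x>0$ so the singular term $2u\varphi_\epsilon/r$ is harmless near $r=R_x$, while on $\{|u|=|v|\}$ the term $|v^2-u^2|^{p-1}$ is multiplied by $u\varphi_\epsilon(u,v)$ and $\varphi_\epsilon$ vanishes there. More carefully: if $\liminf_{r\to R_x}(u^2+v^2)>0$, then the trajectory remains in a compact subset of $\mathbb{R}^2\setminus\{0\}$; one then shows the velocity field is bounded there (using that $\varphi_\epsilon$ kills the singularity exactly on the diagonal, and is smooth and bounded elsewhere), so $(u,v)$ is uniformly Lipschitz near $R_x$ and extends continuously to $r=R_x$ with $(u,v)(R_x)\ne 0$; but then Lemma \ref{existanceunicite} would extend the solution past $R_x$ while keeping it nonzero on a slightly larger interval, contradicting maximality of $R_x$. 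Hence $\liminf_{r\to R_x}(u^2+v^2)=0$. To upgrade this to a genuine limit, use the angular estimate of Lemma \ref{deriveeangulaire}: on the region $\{H\ge 0\}$ (which contains the whole trajectory since $H(u,v)\ge H(0,x)>0$, assuming $x$ is such that $H(0,x)>0$; in general $H\ge A(u^2+v^2)-B$ suffices together with point (v) of Lemma \ref{energielevel}) the radial and angular dynamics can be decoupled, and combined with the monotonicity of $H$ one shows $u^2+v^2$ cannot oscillate back to a positive value — more directly, Lemma \ref{energielevel}(v) gives that $H(u,v)\ge\gamma>0$ forces $u^2+v^2\ge D_\gamma>0$, so $H(u(r),v(r))\to 0$ along any sequence where $u^2+v^2\to0$, and by monotonicity $H(u(r),v(r))\to 0$, whence $u^2(r)+v^2(r)\to 0$ by point (v) again.

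The main obstacle is the last step: ruling out the scenario where the trajectory approaches $0$ along a subsequence but not as a full limit, and simultaneously handling the singular/degenerate behavior of the vector field on the diagonal $\{|u|=|v|\}$ and at $r=0$. The key mechanism is that $\varphi_\epsilon$ was \emph{designed} to make the field Hamiltonian (hence locally well-behaved and with $H$ exactly conserved) precisely on a neighborhood of the diagonal, so that the only genuine loss of regularity is at $(u,v)=(0,0)$; combined with the coercivity and monotonicity of $H$, this forces the trajectory either to survive for all time or to converge to the origin.
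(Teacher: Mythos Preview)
Your construction of the maximal solution and the a priori bound via coercivity of $H$ are correct and coincide with the paper's argument. The issues are all in the last paragraph, where you try to justify $\lim_{r\to R_x}(u,v)=(0,0)$.

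First, a sign slip: $H$ is nonincreasing, so $H(u(r),v(r))\le H(0,x)$, not $\ge$; the trajectory need not stay in $\{H\ge 0\}$. More seriously, your claim that ``$\varphi_\epsilon$ kills the singularity exactly on the diagonal'' misreads the regularization: in \eqref{equation:uvreg} the cut-off $\varphi_\epsilon$ multiplies only the damping term $2u/r$, \emph{not} the nonlinear term $F(v^2-u^2)=p|v^2-u^2|^{p-1}$. So the vector field is \emph{not} bounded on a compact set away from the origin --- it blows up along $\{|u|=|v|\}$ --- and you cannot conclude that $(u,v)$ is uniformly Lipschitz near $R_x$ from boundedness of the field. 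What the regularization buys is that near the diagonal the system is autonomous Hamiltonian: the trajectory is pinned to a level curve of $H$ and, by Lemma~\ref{deriveeangulaire} with $\varphi=0$, moves at angular speed $\ge(1-p)(\Omega-m)>0$, so it traverses $\mathcal{R}^2_\epsilon$ in uniformly bounded time rather than getting stuck. That is the mechanism you need, not boundedness of the field.

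Second, your upgrade from $\liminf$ to $\lim$ is circular. You correctly deduce $H(u(r),v(r))\to 0$ from monotonicity plus a subsequence going to the origin, but ``whence $u^2+v^2\to 0$ by point~(v) again'' is false: the level set $H^{-1}(\{0\})$ is a nontrivial curve through the origin (cf.\ Figure~\ref{graph:level}), so $H\to 0$ does not force $(u,v)\to 0$. Point~(v) of Lemma~\ref{energielevel} gives the implication in only one direction.

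The paper's own proof simply invokes the standard continuation principle: the solution is bounded, and Lemma~\ref{existanceunicite} allows extension through any point $(u,v)\ne(0,0)$, hence maximality of $R_x<\infty$ forces the limit to be $(0,0)$. If you want to make this rigorous you should argue that on any compact set away from the origin there are only finitely many diagonal crossings in bounded time (via the angular speed bound above), so near $R_x$ the system is eventually either regular or purely Hamiltonian on a fixed level curve; in either case the limit $\lim_{r\to R_x}(u,v)$ exists and is nonzero, and Lemma~\ref{existanceunicite} then extends past $R_x$, contradicting maximality.
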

\begin{proof}
Lemma \ref{existanceunicite} ensures the existence and the uniqueness of the maximal solution $(u,v)$ of problem \eqref{systequation:uvreg}.
 The function $H(u,v)$ is non increasing in $[0,R_x)$ by lemma \ref{deriveeenergie}, so there exists a positive constant $C$ which depends only on $H(0,x),$ such that 
\[  
	\underset{r\in[0,R_x)}{\sup}\sqrt{u^2(r)+v^2(r)}\leq C
\]
by lemma \ref{energielevel}. Any solution to problem \eqref{systequation:uvreg} can then be extended thanks to lemma \ref{existanceunicite} as far as $(u,v)(r)\neq 0.$  Thus, if $R_x<\infty$, we have
\[
	\underset{r\rightarrow R_x}{\lim}~(u(r),v(r)) = (0,0).
\]
\end{proof}
%
%----------------------------------------------------------------------------------------------------------------------------------
%---------------------------------------------------------------------------------------Main results
\section{The shooting method}\label{section:shoot}
In this section, we fix $p\in(0,1/2).$
Once the existence and the local uniqueness are shown for the regularized problem, we can adapt the shooting method of Balabane, Dolbeault and Ounaies \cite{balabane2003} to our problem. In this section, we will denote by $(u_x,v_x)$ the maximal solution of problem \eqref{systequation:uvreg} where $x>0$, to insist on the dependence on $x$. 

%----------------------------------------------------------Definition des ensembles de la mŽthodes de tir.
We define 
\[
	N_x(a,b) := \#\{r\in(a,b)|v_x(r)=0 \}\in[0,+\infty]
\] 
for $0\leq a<b\leq+\infty $ and for every $\gamma \geq 0$, 
\[
	\rho_x(\gamma) := \sup\{r>0: H(u_x, v_x)\geq \gamma\}\in[0,+\infty]\cup\{-\infty\}.
\]  
We will write $N_x(b)$ instead of $N_x(0,b).$
The core of the shooting method will be the study of the following sets which are introduced in \cite{balabane2003}:
\begin{defn}\label{setsinitialdata} 
Let $k\in\mathbb{N}.$ We define
\begin{align*}
\lefteqn{A_k :=\{ x>0|\underset{r\rightarrow\infty}{\lim} H(u_x,v_x)(r) <0,(u_x,v_x)(r)\neq (0,0)~\forall r \geq0, \dots}\\
	&&N_x(\infty) = k \}\\
&I_k  :=\{ x>0|\underset{r\rightarrow\rho_x(0)}{\lim} (u_x,v_x)(r) =(0,0), ~N_x(\rho_x(0)) = k\}. 
\end{align*}
\end{defn}

\begin{figure}[!ht]
\begin{center}
\includegraphics[scale=0.9]{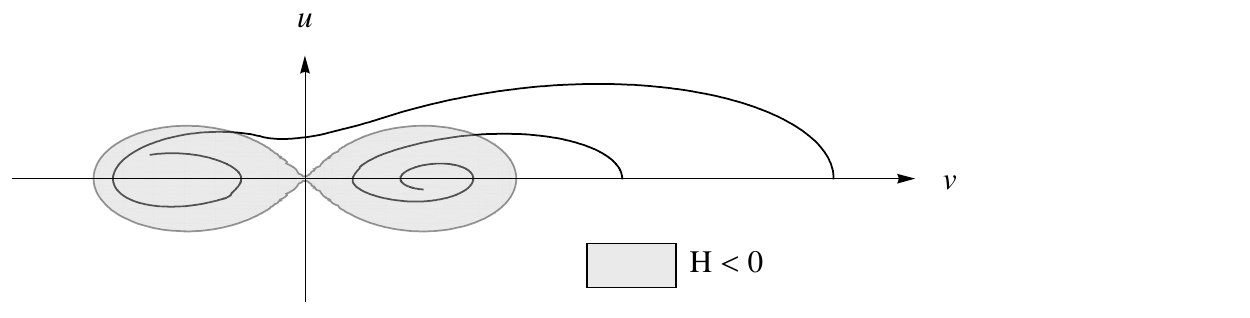}
\caption{Solutions belonging to $A_0$ and $A_1.$}
\label{graph:shooting}
\end{center}
\end{figure}
\begin{rem}\label{rem:shootingsets}
If there is $R>0$ such that $H(u_x,v_x)(R)<0$ then by lemmas \ref{deriveeenergie} and \ref{exitenceglobale}, we get that $R_x=+\infty.$ So, the sets $A_k$ are well defined for any $k$. The key idea is that $(u_x,v_x)$ winds around the connected set $\{H=0\}$ and cross it at finite radius $\rho_x(0)$. We can have
\[
	\underset{r\rightarrow R_x}{\lim} H(u_x,v_x)(r)<0
\] 
and $x$ belongs to $\underset{k\in\mathbb{N}}{\cup}~A_k$ or $R_x<\infty,$ 
\[
	\underset{r\rightarrow R_x}{\lim} H(u_x,v_x)(r)=0
\] 
and $x$ belongs to $\underset{k\in\mathbb{N}}{\cup}~I_k.$ The goal is to show that $I_k$ is not an empty set for any $k$.
\end{rem}
%-----------------------------------------------------------------------------------------------------------------------------------%-----------------------------------------------------------------------------------------------------------------------------------
\subsection{Main results}
%
%
%-----------------------------------------------------------------------------------------------------------------------------------
%-----------Estimation de premier temps de sortie de v^2-u^2>0 et borne sur la variation angulaire.
%
%
In the next lemma, we give uniform estimations far enough from $r=0$ and from the radius 
\[
	\inf\{r>0,~|u_x|(r)=|v_x|(r)\}.
\]
\begin{lem}
\label{estimationangulaire}
There exist $\overline{p}\in (0,1/2)$ and for all $0<q<\overline{p},$ a constant $\Omega_q>m$ such that if $\Omega > \Omega_q,$  then there are $r_0>0,~ \alpha >0$ and $\Theta >0$ which satisfy:
\begin{enumerate}[(i)]
	\item $r_0 > \frac{1}{(\Omega-m)(1-p)},$
	\item $v_{}^2(r)-u_{}^2(r) \geq \alpha x^2$ for all $r\in[0,r_0],$
	\item $\theta_{}(r)'\geq \Theta,$	whenever  $r\geq r_0$ and $H(u_{},v_{})(r)\geq0,$
\end{enumerate}
for any  $0<p\leq q$ and $0<x$ where  $(u_{},v_{})$ is the solution of problem \eqref{systequation:uv} with initial condition $(0,x)$ ($\overline{p}\simeq 0.0173622$).
\end{lem}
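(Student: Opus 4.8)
The plan is to establish the three items essentially separately, exploiting that for the \emph{unregularized} problem \eqref{systequation:uv} the trajectory starting at $(0,x)$ stays, for small $r$, in the region $\{|u|<|v|\}$ where $F(v^2-u^2)=p(v^2-u^2)^{p-1}$ and Lemmas \ref{deriveeenergie}--\ref{deriveeangulaire} apply with $\varphi\equiv 1$. First I would record the obvious local behaviour near $r=0$: from \eqref{equation:uv} with $(u(0),v(0))=(0,x)$ we have $v(0)=x$, $v'(0)=0$, and $u'(0)+\lim 2u/r = v(0)(-F(x^2)-(m-\Omega))$, so $u'(0)=\tfrac13 x(\Omega-m-px^{2(p-1)})$ — in particular $u=O(r)$ and $v^2-u^2 = x^2 + O(r^2)$ near zero. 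The point is to turn this into a quantitative statement valid \emph{uniformly in $x>0$ and in $p\le q$}. By scaling out $x$ (set $\tilde u = u/x$, $\tilde v=v/x$) the nonlinearity $F(v^2-u^2)$ becomes $p x^{2(p-1)}|\tilde v^2-\tilde u^2|^{p-1}$; one checks that on the cone $|u|\le \tan(\overline\theta)|v|$ the ``bad'' coefficient $p|v^2-u^2|^{p-1}$ is controlled because $|v^2-u^2|\ge (\cos^2\overline\theta)(u^2+v^2)$ is bounded below by Lemma \ref{energielevel}(\ref{lem:point6}) (using $H(u,v)\ge H(0,x)\ge 0$, which holds on the relevant interval by monotonicity of $H$), and for $p$ small $p|v^2-u^2|^{p-1}$ is uniformly bounded there. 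This is what forces the restriction $p<\overline p$ and the explicit numerical value.

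For item (ii), I would argue that as long as the trajectory remains in the cone $\{|u|\le\tan(\overline\theta)|v|\}$ the derivative estimate of Lemma \ref{deriveeangulaire} gives $\theta' \ge (1-p)(\Omega-m) - 1/r$, which is strictly positive once $r > \frac{1}{(1-p)(\Omega-m)}$; conversely, for $r$ below a fixed multiple of that threshold, a Gronwall-type bound on $\frac{d}{dr}(v^2-u^2)$ (whose right-hand side, after using the equations, is a bounded multiple of $v^2-u^2$ plus a term $-\tfrac{4u^2}{r}\varphi$ which here is harmless since we can work before entering $\mathcal R^1_\epsilon$, or simply bound $|u|\le\tan\overline\theta|v|$) shows $v^2-u^2\ge \alpha x^2$ on $[0,r_0]$ for suitable fixed $r_0,\alpha$. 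Concretely I would choose $r_0$ slightly larger than $\frac{1}{(1-p)(\Omega-m)}$ — which is why (i) is stated — and then (iii) is immediate from Lemma \ref{deriveeangulaire}: for $r\ge r_0$ with $H(u,v)(r)\ge 0$ we get $\theta'\ge (1-p)(\Omega-m) - 1/r_0 =: \Theta > 0$, and we fix $\Omega_q$ large enough (depending on $q$ through $\overline\theta$ and the constants $\overline v$, $P$ of Lemma \ref{energielevel}(\ref{lem:point6})) that $\Theta>0$ and that the cone is not escaped before $r_0$.

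The subtle part, and the main obstacle, is the uniformity in $p$: the constants $r_0,\alpha,\Theta$ must be chosen once and for all for every $p\le q$, and the threshold $\overline p$ must be pinned down. The delicate mechanism is the interplay between the singular factor $|v^2-u^2|^{1-p}$ in the denominator of $F$ and the lower bound $|v^2-u^2|\ge \overline v^{\,2}\cos^2\overline\theta$ on the cone: the product $p\,|v^2-u^2|^{p-1}$ must be made $<(\Omega-m)$-ish uniformly, and since $|v^2-u^2|$ could a priori be large (of order $x^2$) one uses that $t\mapsto p\,t^{p-1}$ is decreasing, so the worst case is $|v^2-u^2|$ small, i.e. near $\overline v^{\,2}\cos^2\overline\theta$; then $p\,(\overline v^2\cos^2\overline\theta)^{p-1}\to 0$ as $p\to0$, giving $\overline p$. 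I would also need to verify the trajectory actually stays in the cone up to $r_0$, which is a closing-the-loop argument: assume it leaves at some first $r_*\le r_0$, derive from $\theta'\ge\Theta>0$ on $[r_0\wedge\text{(threshold)},r_*]$ and the explicit sign of $u'(0)>0$ that $|u|/|v|$ is still small at $r_*$, contradiction. The numerical constant $\overline p\simeq 0.0173622$ presumably comes from optimizing the choice of $\overline\theta$ in this last estimate.
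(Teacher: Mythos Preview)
Your overall architecture --- Gronwall for (ii), then (iii) from (i) via Lemma~\ref{deriveeangulaire} --- is right, but you miss the one observation that makes the argument clean and that produces the explicit $\overline p$. Compute directly from \eqref{equation:uv}:
\[
\frac{d}{dr}(v^2-u^2)=2vv'-2uu'=\frac{4u^2}{r}-4\Omega\,uv.
\]
The singular factor $F(v^2-u^2)$ \emph{cancels completely}. So no control on $p|v^2-u^2|^{p-1}$ is needed for (ii), and Lemma~\ref{energielevel}(\ref{lem:point6}) plays no role here. Your whole mechanism of bounding $F$ on a cone, and the speculation that $\overline p$ comes from ``optimizing the choice of $\overline\theta$'', is off target. (You also have the wrong sign on the friction term: it is $+\tfrac{4u^2}{r}$, which helps.)

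With the correct identity, the paper proceeds as follows. On the interval where $v>|u|$ and $r\le r_0$ one has $uv\le v^2$ and $u^2\le x^2$, so
\[
\frac{d}{dr}(v^2-u^2)\ \ge\ -4\Omega(v^2-u^2)-4\Bigl(\Omega-\tfrac{1}{r_0}\Bigr)x^2,
\]
a linear differential inequality that integrates explicitly to
\[
v^2-u^2\ \ge\ x^2\Bigl[e^{-4\Omega r_0}\Bigl(1+\tfrac{\Omega-1/r_0}{\Omega}\Bigr)-\tfrac{\Omega-1/r_0}{\Omega}\Bigr].
\]
Choosing $r_0=\dfrac{1}{(\Omega-m)(1-q)}$ makes the bracket equal to an explicit function $g(\Omega,q)$; item (i) holds for all $p\le q$ automatically. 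One then checks that $\Omega\mapsto g(\Omega,q)$ is increasing with $\lim_{\Omega\to\infty}g(\Omega,q)=f(q):=e^{-4/(1-q)}(1+q)-q$. The threshold $\overline p\simeq 0.0173622$ is the unique zero of this $f$, and $\Omega_q$ is the unique zero of $g(\cdot,q)$ for $q<\overline p$; then $\alpha:=g(\Omega,q)>0$ works for every $p\le q$. Item (iii) is, as you say, immediate from Lemma~\ref{deriveeangulaire} once (i) holds: $\theta'\ge(1-p)(\Omega-m)-1/r_0=:\Theta>0$.

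Finally, note that your cone route cannot deliver the statement as written: Lemma~\ref{energielevel}(\ref{lem:point6}) gives only a \emph{fixed} lower bound $|v|\ge\overline v$, which yields $v^2-u^2\ge\mathrm{const}$, not the required $v^2-u^2\ge\alpha x^2$ uniformly in $x$. The $x^2$ scaling is exactly what the $F$-free Gronwall above provides.
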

\begin{rem}
$r_0,$ $\Theta,$ $\alpha$ and $\Omega_q $ may be chosen independent on $p$ and $x$.
\end{rem}

This result is slightly finer than a result of Balabane, Cazenave and Vazquez \cite{balabane1990} but the proof is straightforward and based on their ideas. It is postponed in the appendix.

From now on, we fix $p<\overline{p},$ $\Omega_p<\Omega$ and  $E_1=E_0^2\alpha$ where $E_1$ is the constant in the regularization sets of subsection \ref{subsection:regul}, $E_0$ comes from lemma \ref{energielevel} and $\alpha$ from lemma \ref{estimationangulaire}. 
\begin{rem}
Lemma \ref{estimationangulaire} remains true for the solutions of problem \eqref{systequation:uvreg} thanks to these choices for $E_1$ and for the regularization.
\end{rem}
%------------------------------------------------------------------------------------------------------------------------------------------------------------------------------
%-----------------------------controlenombredetour
We study now the winding number $N_x(a,b).$
\begin{lem}\label{controlenombredetour}
Let us assume that $r_0< \rho_x(0)$ and let $r_0\leq a < b\leq \rho_x(0)$, then:
\[N_x(a,b)\geq \lfloor\frac{\Theta}{\pi}(b-a)\rfloor\]
where $\lfloor.\rfloor$ is the floor function.
\end{lem}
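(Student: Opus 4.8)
The plan is to bound the winding number $N_x(a,b)$ from below by controlling the angular variable $\theta$ introduced in Lemma~\ref{deriveeangulaire}. The key observation is that on the interval $[a,b]\subset[r_0,\rho_x(0)]$ we have, by definition of $\rho_x(0)$, that $H(u_x,v_x)(r)\geq 0$ for every $r\in[a,b]$; hence point (iii) of Lemma~\ref{estimationangulaire} (which, by the remark following it, applies to the regularized solutions with our choice of $E_1$) gives $\theta_x(r)'\geq \Theta$ for all such $r$. Integrating this inequality yields
\[
	\theta_x(b)-\theta_x(a) = \int_a^b \theta_x(r)'\,dr \geq \Theta(b-a).
\]

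Next I would translate this total increase of the argument into a count of zeros of $v_x$. Recall $\theta_x(r)=\int_0^r \frac{d}{ds}\arctan(u_x/v_x)(s)\,ds$, so $\theta_x$ is a continuous (indeed $C^1$ away from the finitely many singular radii, and continuous across them since $(u_x,v_x)\neq(0,0)$ there) lift of the polar angle of the trajectory $(v_x,u_x)$ around the origin. A zero of $v_x$ corresponds precisely to the trajectory crossing the vertical axis, i.e. to $\arctan(u_x/v_x)$ passing through $\pm\pi/2$, i.e. to $\theta_x$ taking a value in $\frac{\pi}{2}+\pi\mathbb{Z}$. Since $\theta_x$ is continuous and monotone increasing on $[a,b]$ (its derivative is $\geq\Theta>0$), by the intermediate value theorem it hits at least one point of any arithmetic progression of step $\pi$ in each subinterval over which it increases by at least $\pi$. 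More precisely, the number of elements of $\frac{\pi}{2}+\pi\mathbb{Z}$ strictly between $\theta_x(a)$ and $\theta_x(b)$ is at least $\lfloor \frac{\theta_x(b)-\theta_x(a)}{\pi}\rfloor \geq \lfloor \frac{\Theta}{\pi}(b-a)\rfloor$, and each such value is attained at a radius in $(a,b)$ where $v_x$ vanishes. This gives $N_x(a,b)\geq \lfloor\frac{\Theta}{\pi}(b-a)\rfloor$.

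The main technical point to handle carefully is the reduction from ``angle increases by $\pi$'' to ``at least one zero of $v_x$'': one must ensure that the radii counted are genuinely distinct (guaranteed by strict monotonicity of $\theta_x$, since $\theta_x'\geq\Theta>0$ forces injectivity, so distinct values of $\theta_x$ come from distinct radii) and that no zero is double-counted or lies outside $(a,b)$ (handled by taking the count of lattice points strictly interior to the open interval $(\theta_x(a),\theta_x(b))$, together with the floor). A minor subtlety is that the trajectory is only a solution in the extended sense, so $\theta_x$ is a priori only defined piecewise; but since $(u_x,v_x)$ never vanishes on $[a,b]\subset[0,\rho_x(0))$ — where it can only reach the origin in the limit $r\to\rho_x(0)$ — the angle is well-defined and continuous on the whole closed interval $[a,b]$, and the finitely many singular radii do not affect the integration of $\theta_x'$ nor the counting argument. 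I expect this bookkeeping with the floor function and the half-integer shift $\pi/2$ to be the only place where care is needed; everything else is a direct consequence of Lemma~\ref{estimationangulaire}(iii).
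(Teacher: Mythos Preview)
Your proposal is correct and follows exactly the paper's approach: integrate the lower bound $\theta_x'\geq\Theta$ from Lemma~\ref{estimationangulaire}(iii) over $[a,b]$ and then convert the total angular increase into a count of crossings of $\{v=0\}$. In fact you supply considerably more justification than the paper, which simply writes $\theta_x(b)-\theta_x(a)\geq\Theta(b-a)$ and asserts the conclusion; your discussion of the lattice-point counting and the well-definedness of $\theta_x$ across the singular radii fills in details the paper leaves implicit.
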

\begin{proof}
By lemma \ref{estimationangulaire}, we get  
\[
	\theta_x(b)-\theta_x(a) = \int^b_a \theta_x(s)'ds\geq \Theta(b-a),
\]
so that $N_x(a,b)\geq \lfloor\frac{\Theta}{\pi}(b-a)\rfloor.$
\end{proof}

%-----------------------------controledecroissanceenergie
Let $\gamma >0$ be such that $q:=2(1-p)(\Omega+m)C_\gamma-1>0.$ Such a $\gamma$ exists by lemma \ref{energielevel}.
The following lemma gives estimations on the decay of $H(u_x,v_x).$
\begin{lem}\label{controledecroissanceenergie}
Let us assume that $\rho_x(\gamma)>0$, then we have for $r\in (0,\rho_x(\gamma))$:
\[\frac{d}{dr}\left(r^{2(\Omega+m)C_\gamma}H(u_x,v_x)\right)^{1-p}\geq -2p(1-p)C_\gamma^p\rho(\gamma)^{q}|1-2\sin^2\theta_x|^{p-1}. \]
\end{lem}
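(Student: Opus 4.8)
The plan is to differentiate the quantity $\bigl(r^{2(\Omega+m)C_\gamma}H(u_x,v_x)\bigr)^{1-p}$ directly, using the formula for $\frac{d}{dr}H(u_x,v_x)$ from Lemma \ref{deriveeenergie}, and then bound the resulting expression from below by exploiting the three inequalities available when $H(u_x,v_x)\geq\gamma$: the bound $C_\gamma H(u_x,v_x)\geq u^2$ from Lemma \ref{energielevel}(v), the crude upper bound on $H$ coming from Lemma \ref{exitenceglobale}, and the definition of $\rho_x(\gamma)$ which gives $r\leq \rho_x(\gamma)$ on the relevant interval. First I would write, with $G:=r^{2(\Omega+m)C_\gamma}H(u_x,v_x)$,
\[
	\frac{d}{dr}G^{1-p} = (1-p)G^{-p}\left(2(\Omega+m)C_\gamma\, r^{2(\Omega+m)C_\gamma-1}H + r^{2(\Omega+m)C_\gamma}\frac{d}{dr}H\right).
\]
Since $\varphi_\epsilon\in[0,1]$, Lemma \ref{deriveeenergie} gives $\frac{d}{dr}H = -\frac{2u^2\varphi_\epsilon}{r}\bigl(p|v^2-u^2|^{p-1}+(m+\Omega)\bigr)\geq -\frac{2u^2}{r}\bigl(p|v^2-u^2|^{p-1}+(m+\Omega)\bigr)$, so splitting the $(m+\Omega)$ part off gives a term $-\frac{2u^2(m+\Omega)}{r}$ which, using $u^2\leq C_\gamma H$, is at least $-\frac{2(\Omega+m)C_\gamma H}{r}$; this exactly cancels the positive term $2(\Omega+m)C_\gamma r^{2(\Omega+m)C_\gamma-1}H$ inside the parentheses. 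What remains is
\[
	\frac{d}{dr}G^{1-p} \geq (1-p)G^{-p}\, r^{2(\Omega+m)C_\gamma}\left(-\frac{2p\,u^2}{r}|v^2-u^2|^{p-1}\right) = -2p(1-p)\,G^{-p}\,r^{2(\Omega+m)C_\gamma-1}u^2|v^2-u^2|^{p-1}.
\]

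Next I would absorb the factors into the stated form. Write $G^{-p} = r^{-2p(\Omega+m)C_\gamma}H^{-p}$, so $G^{-p}r^{2(\Omega+m)C_\gamma-1} = r^{2(1-p)(\Omega+m)C_\gamma-1}H^{-p} = r^{q}H^{-p}$ with $q=2(1-p)(\Omega+m)C_\gamma-1$. Then bound $u^2 H^{-p}$: since $u^2\leq C_\gamma H$ we get $u^2 H^{-p}\leq C_\gamma H^{1-p}\leq C_\gamma^p u^{2(1-p)}\cdot$(something)—more directly, $u^2 H^{-p} = u^{2p}\cdot u^{2(1-p)}H^{-p}\leq u^{2p}\cdot (C_\gamma H)^{1-p}H^{-p}\cdot$… the cleanest route is $u^2 H^{-p}\leq C_\gamma^p\, u^{2}\,(u^2)^{-p} = C_\gamma^p u^{2(1-p)}$, using $H^{-p}\leq (u^2/C_\gamma)^{-p}=C_\gamma^p u^{-2p}$. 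Finally, for the factor $u^{2(1-p)}|v^2-u^2|^{p-1}$, writing $u=\rho\sin\theta$, $v=\rho\cos\theta$ with $\rho^2=u^2+v^2$ gives $u^{2(1-p)} = \rho^{2(1-p)}\sin^{2(1-p)}\theta\leq \rho^{2(1-p)}$ and $|v^2-u^2|^{p-1}=\rho^{2(p-1)}|\cos^2\theta-\sin^2\theta|^{p-1}=\rho^{2(p-1)}|1-2\sin^2\theta|^{p-1}$, so the product is $|1-2\sin^2\theta_x|^{p-1}$ (the $\rho$ powers cancel). It remains to bound $r^{q}\leq \rho_x(\gamma)^{q}$ on $(0,\rho_x(\gamma))$, which holds since $q>0$ by hypothesis.

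Putting these together yields
\[
	\frac{d}{dr}G^{1-p}\geq -2p(1-p)C_\gamma^p\,\rho_x(\gamma)^{q}\,|1-2\sin^2\theta_x|^{p-1},
\]
which is the claim (up to the harmless notational point that the statement writes $\rho(\gamma)$ for $\rho_x(\gamma)$).

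The main obstacle I anticipate is the careful bookkeeping of the homogeneity of $H$ versus $u^2+v^2$ near the singular set $\{|u|=|v|\}$: one must check that the inequality $u^2\leq C_\gamma H$ is genuinely strong enough to cancel the growing prefactor $r^{2(\Omega+m)C_\gamma-1}$ pointwise (it is, because the cancellation is exact and algebraic, not asymptotic), and that the remaining singular factor $|v^2-u^2|^{p-1}$ is correctly reduced to $|1-2\sin^2\theta_x|^{p-1}$ without stray powers of $\rho=\sqrt{u^2+v^2}$. A secondary technical point is that $(u_x,v_x)$ here is a solution only in the extended sense of Definition \ref{def:extended}, so the differentiation is valid only off the finitely many singular radii $R_1,\dots,R_n$; since the inequality is a pointwise differential inequality on $(0,\rho_x(\gamma))\setminus\{R_i\}$ and $G^{1-p}$ is continuous, this is exactly what the statement asserts.
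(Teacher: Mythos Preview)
Your proof is correct and follows essentially the same approach as the paper: both use Lemma~\ref{deriveeenergie} together with the bound $u^2\le C_\gamma H$ from Lemma~\ref{energielevel}(v) to absorb the $(\Omega+m)$ term into the power $r^{2(\Omega+m)C_\gamma}$, then reduce the remaining singular factor $u^2|v^2-u^2|^{p-1}$ to $|1-2\sin^2\theta_x|^{p-1}$ via $u^{2p}\le C_\gamma^p H^p$ and $(u^2/(u^2+v^2))^{1-p}\le 1$, and finally bound $r^q\le\rho_x(\gamma)^q$. The only difference is organizational: you differentiate $G^{1-p}$ at the outset, while the paper first bounds $\frac{d}{dr}H$, then $\frac{d}{dr}(r^{2(\Omega+m)C_\gamma}H)$, and only at the end passes to the $(1-p)$ power.
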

\begin{proof}
For the sake of notation simplicity, we remove here the $x$ subscripts. By lemma \ref{deriveeenergie}, we have for $r\in(0, \rho(\gamma))$:
\begin{align*}
\frac{d}{dr}H(u,v) 		&\geq-\frac{2u^2}{r}(p| v^2-u^2|^{p-1}+(m+\Omega)) \\
					&\geq -\frac{2C_\gamma(\Omega+m) H(u,v)}{r} - \frac{2pu^2}{r(u^2+v^2)^{1-p}}\left|\frac{v^2-u^2}{u^2+v^2}\right|^{p-1}\\
					&\geq-\frac{2C_\gamma(\Omega+m) H(u,v)}{r} - \frac{2pu^{2p}}{r}\left(\frac{u^{2}}{u^2+v^2}\right)^{1-p}| 1-2\sin^2\theta|^{p-1}\\
					&\geq-\frac{2C_\gamma(\Omega+m) H(u,v)}{r} - \frac{2pC_\gamma^pH(u,v)^p| 1-2\sin^2\theta|^{p-1}}{r},
\end{align*}
so,
\begin{align*}
\frac{d}{dr}(r^{2(\Omega+m)C_\gamma}&H(u,v))\\
 	&\geq -2pC_\gamma^pH(u,v)^p| 1-2\sin^2\theta|^{p-1}r^{2(\Omega+m)C_\gamma-1}\\
												&\geq -2pC_\gamma^p| 1-2\sin^2\theta|^{p-1}r^{q}(r^{2(\Omega+m)C_\gamma}H(u,v))^p \\
												&\geq -2pC_\gamma^p| 1-2\sin^2\theta|^{p-1}\rho(\gamma)^q(r^{2(\Omega+m)C_\gamma}H(u,v))^p. \\				
\end{align*}
Finally, we get
\begin{align*}
	\frac{d}{dr}\left(r^{2(\Omega+m)C_\gamma}H(u_x,v_x)\right)^{1-p}&=(1-p)\frac{\frac{d}{dr}(r^{2(\Omega+m)C_\gamma}H(u,v))}{r^{2(\Omega+m)C_\gamma}H(u,v))^p}\\
	&\geq -2p(1-p)C_\gamma^p| 1-2\sin^2\theta|^{p-1}\rho(\gamma)^q.
\end{align*}
\end{proof}
%
%
%
%-----------------------------PropriŽtŽ d'enroulement.
The following proposition ensures that the number of times the solutions circle around the set $\{H=0\}$ tends to infinity when $x$ tends to infinity.
\begin{prop}
\label{proprietedenroulement}
We have $\lim_{x\rightarrow\infty} N_x(r_0,\rho_x(\gamma)) = \infty$.
\end{prop}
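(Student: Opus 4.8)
The plan is to reduce the statement to the claim that $\rho_x(\gamma)\to\infty$ as $x\to\infty$, and then to obtain a lower bound on $\rho_x(\gamma)$ that blows up with $x$ by integrating the differential inequality of Lemma~\ref{controledecroissanceenergie}. For the reduction: if $\rho_x(\gamma)=+\infty$ then, $H$ being nonincreasing (Lemma~\ref{deriveeenergie}), $H(u_x,v_x)\ge\gamma>0$ everywhere, so $\theta_x'\ge\Theta$ on $[r_0,\infty)$ by Lemma~\ref{estimationangulaire}(iii) and $v_x$ vanishes infinitely often, whence $N_x(r_0,\rho_x(\gamma))=+\infty$. If $\rho_x(\gamma)<\infty$ (which in particular forces $\rho_x(\gamma)<R_x$, since $H\to 0<\gamma$ at a finite $R_x$ by Lemma~\ref{exitenceglobale}), then once we know $\rho_x(\gamma)>r_0$ we may apply Lemma~\ref{controlenombredetour} with $a=r_0$, $b=\rho_x(\gamma)$ — this is legitimate because $\rho_x(\gamma)\le\rho_x(0)$ — to get $N_x(r_0,\rho_x(\gamma))\ge\lfloor\tfrac{\Theta}{\pi}(\rho_x(\gamma)-r_0)\rfloor$. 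So it suffices to prove $\rho_x(\gamma)\to\infty$.

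Next I would show $H(u_x,v_x)(r_0)\to\infty$ as $x\to\infty$. By Lemma~\ref{exitenceglobale} and Lemma~\ref{energielevel}(i) one has $u_x^2+v_x^2\le (H(0,x)+B)/A$ along the whole solution, while Lemma~\ref{estimationangulaire}(ii) gives $0<\alpha x^2\le v_x^2(r_0)-u_x^2(r_0)$. Since $H(0,x)=\tfrac{\Omega-m}{2}x^2-\tfrac12 x^{2p}$, evaluating the definition of $H$ at $r_0$ and discarding the nonnegative $u_x^2$ term yields $H(u_x,v_x)(r_0)\ge \tfrac{\Omega-m}{2}\alpha x^2-O(x^{2p})$, which tends to $+\infty$ because $p<1$. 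In particular $H(u_x,v_x)(r_0)>\gamma$ for all large $x$, and by continuity of $H(u_x,v_x)$ this gives $\rho_x(\gamma)>r_0$.

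Now the heart of the argument: integrate the inequality of Lemma~\ref{controledecroissanceenergie} over $[r_0,\rho]$, where $\rho:=\rho_x(\gamma)<\infty$. Writing $c:=2(\Omega+m)C_\gamma$, so that $c(1-p)=q+1$, and using $H(u_x,v_x)(\rho)=\gamma$ (continuity plus the definition of $\rho_x(\gamma)$), this gives
\[
r_0^{\,q+1}\,H(u_x,v_x)(r_0)^{1-p}\ \le\ \gamma^{1-p}\rho^{\,q+1}+2p(1-p)C_\gamma^{p}\,\rho^{\,q}\int_{r_0}^{\rho}|1-2\sin^2\theta_x(s)|^{p-1}\,ds .
\]
To tame the integral I change variables $\theta=\theta_x(s)$: this is admissible since $\theta_x'\ge\Theta>0$ on $[r_0,\rho]$ (Lemma~\ref{estimationangulaire}(iii), as $H\ge\gamma>0$ there), and it gives $\int_{r_0}^{\rho}|1-2\sin^2\theta_x|^{p-1}\,ds\le \Theta^{-1}\int_{\theta_x(r_0)}^{\theta_x(\rho)}|\cos 2\theta|^{p-1}\,d\theta\le \Theta^{-1}K_p\bigl(1+\tfrac{2}{\pi}(\theta_x(\rho)-\theta_x(r_0))\bigr)$, where $K_p:=\int_0^{\pi/2}|\cos 2\theta|^{p-1}\,d\theta<\infty$ (the singularity at $\pi/4$ is integrable since $p>0$) and I use the $\tfrac{\pi}{2}$-periodicity of $|\cos 2\theta|^{p-1}$. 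Finally, from Lemma~\ref{deriveeangulaire}, bounding $|2u_xv_x\varphi/r|\le (u_x^2+v_x^2)/r_0$, $\varphi\le 1$, and $H\le \tfrac12(u^2+v^2)^p+\tfrac{\Omega+m}{2}(u^2+v^2)$ together with the lower bound $u_x^2+v_x^2\ge D_\gamma$ on $\{H\ge\gamma\}$ from Lemma~\ref{energielevel}(v), one gets $\theta_x'\le\Lambda$ on $[r_0,\rho]$ for a constant $\Lambda$ independent of $x$, hence $\theta_x(\rho)-\theta_x(r_0)\le\Lambda\rho$. Substituting back, $r_0^{\,q+1}H(u_x,v_x)(r_0)^{1-p}\le \gamma^{1-p}\rho^{\,q+1}+c_1\rho^{\,q}+c_2\rho^{\,q+1}$ with $c_1,c_2$ independent of $x$; since the left side tends to $+\infty$ by the previous step while the right side would stay bounded if $\rho_x(\gamma)$ did, we conclude $\rho_x(\gamma)\to\infty$, and therefore $N_x(r_0,\rho_x(\gamma))\to\infty$.

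The main obstacle is this last step: controlling the singular integral $\int|1-2\sin^2\theta_x|^{p-1}$ — the blow-up occurs exactly on the set $\{|u|=|v|\}$ — and, more delicately, ensuring that every constant in the final chain of inequalities is uniform in $x$. The genuinely non-obvious point is the uniform upper bound for $\theta_x'$ on $[r_0,\rho]$: naively the term $pH$ in the numerator of Lemma~\ref{deriveeangulaire} is as large as $H(u_x,v_x)(r_0)\sim x^2$, and it is only after dividing by $u_x^2+v_x^2$ and invoking the comparison between $H$ and $u^2+v^2$ valid on the superlevel set $\{H\ge\gamma\}$ that one sees $\theta_x'$ is in fact bounded independently of $x$. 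Everything else (the reduction, the growth of $H(u_x,v_x)(r_0)$, the periodicity/integrability of $|\cos 2\theta|^{p-1}$) is routine.
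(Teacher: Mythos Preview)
Your proof is correct and follows the paper's strategy: show $H(u_x,v_x)(r_0)\to\infty$ via Lemma~\ref{estimationangulaire}, integrate the differential inequality of Lemma~\ref{controledecroissanceenergie} over $[r_0,\rho_x(\gamma)]$, and tame the singular integral $\int|1-2\sin^2\theta_x|^{p-1}\,dr$ by the substitution $\theta=\theta_x(r)$ (legitimate since $\theta_x'\ge\Theta$). The only divergence is in the last comparison step. The paper bounds the resulting $\theta$-integral directly by $(N_x(r_0,\rho_x(\gamma))+2)\int_0^\pi|1-2\sin^2\theta|^{p-1}\,d\theta$, using periodicity and the correspondence between zeros of $v_x$ and $\theta$ crossing half-integer multiples of $\pi$, and then invokes $\rho_x(\gamma)\le\tfrac{\pi}{\Theta}(N_x+1)+r_0$ from Lemma~\ref{controlenombredetour} so that the entire right-hand side becomes a polynomial in $N_x$; divergence of the left-hand side then forces $N_x\to\infty$ directly. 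You instead first establish a uniform upper bound $\theta_x'\le\Lambda$ on $[r_0,\rho_x(\gamma)]$ (correctly, via Lemma~\ref{energielevel}(v)), turn the right-hand side into a polynomial in $\rho_x(\gamma)$, deduce $\rho_x(\gamma)\to\infty$, and only then apply Lemma~\ref{controlenombredetour} to get $N_x\to\infty$. The paper's route is slightly more economical in that it never needs the upper bound on $\theta_x'$; your route buys a cleaner logical structure (the explicit reduction to $\rho_x(\gamma)\to\infty$ and the separate treatment of $\rho_x(\gamma)=\infty$).
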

\begin{proof}
Lemma \ref{estimationangulaire} ensures that
\[
	v_x^2(r_0)-u_x^2(r_0)\geq\alpha x^2
\]
and lemma \ref{energielevel} gives
\begin{align*}
	\underset{x\rightarrow +\infty}{\lim}~H(u_x,v_x)(r_0)&\geq \underset{x\rightarrow +\infty}{\lim}~A(u_x^2(r_0)+v_x^2(r_0))-B \\
	&\geq \underset{x\rightarrow +\infty}{\lim}~A\alpha x^2-B= +\infty.
\end{align*}
Then, there exists $x_0>0$ such that 
\[
	\rho_x(0)>\rho_x(\gamma)>r_0
\]
for any $x\geq x_0.$
By lemma \ref{controledecroissanceenergie}, we get
\begin{align*}
	\lefteqn{\left(\rho_x(\gamma)^{2(\Omega+m)C_\gamma}\gamma\right)^{1-p} -\left(r_0^{2(\Omega+m)C_\gamma}H(u_x,v_x)(r_0)\right)^{1-p}}\\
	&= \int_{r_0}^{\rho_x(\gamma)} \frac{d}{dr}\left(r^{2(\Omega+m)C_\gamma}H(u_x,v_x)\right)^{1-p}dr\\
	&\geq \int_{r_0}^{\rho_x(\gamma)} -2p(1-p)C_\gamma^p\rho_x(\gamma)^{q}|1-2\sin^2\theta_x(r)|^{p-1} dr.
\end{align*}
Lemma \ref{estimationangulaire} gives then that:
\begin{align*}
	\lefteqn{\left(\rho_x(\gamma)^{2(\Omega+m)C_\gamma}\gamma\right)^{1-p} -\left(r_0^{2(\Omega+m)C_\gamma}H(u_x,v_x)(r_0)\right)^{1-p}}\\
	&\geq -\frac{2p(1-p)C_\gamma^p\rho_x(\gamma)^{q} }{\Theta}\int_{r_0}^{\rho_x(\gamma)}| 1-2\sin^2\theta_x(r)|^{p-1}\theta'_x(r)dr\\
	&\geq -\frac{2p(1-p)C_\gamma^p\rho_x(\gamma)^{q} }{\Theta}\int_{\theta_x(r_0)}^{\theta_x(\rho_x(\gamma))}| 1-2\sin^2\theta|^{p-1}d\theta\\
	&\geq -\frac{2p(1-p)C_\gamma^p\rho_x(\gamma)^{q} }{\Theta}(N_x(r_0,\rho_x(\gamma))+2)\int_{0}^{\pi}| 1-2\sin^2\theta|^{p-1}d\theta.\\
\end{align*}
Since $p\in(0,1)$, the integral $\int_{0}^{\pi}| 1-2\sin^2\theta|^{p-1}d\theta$ converges. Moreover, we have by lemma \ref{controlenombredetour} that
\[
	\frac{\pi(N_x(r_0,\rho_x(\gamma))+1)}{\Theta}+r_0\geq \rho_x(\gamma).
\]
We have already shown that
\[
	\underset{x\rightarrow +\infty}{\lim}~H(u_x,v_x)(r_0)=+\infty
\]
so these inequalities ensure that
\[
	\underset{x\rightarrow +\infty}{\lim}~N_x(r_0,\rho_x(\gamma))=+\infty.
\] 
\end{proof}
%------------------------------------------------------------------------------------------------------------------------------------------------------------------------------
%---------------------------------------------------------------------------------------Trapping region
We have now to construct the trapping zone as in proposition $3$ of \cite{balabane2003}. Nevertheless, the zone we construct is more complicated (see figure \ref{graph:trapping}).
\begin{prop}
\label{trapping}
For all $k \in \mathbb{N},$ there exists $\sigma >0$ such that if $N_x(\overline{R}) = k$ and $u^2_x(\overline{R})+v^2_x(\overline{R})< \sigma^2$ for some $x$ and $\overline{R}$ positive, then $x$ belongs to $A_k\cup I_k\cup A_{k+1}.$ 
\end{prop}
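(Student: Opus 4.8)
The plan is to show that once a solution enters a small ball around the origin after exactly $k$ zeros of $v_x$, it is "trapped" in the sense that it can only do one of three things: spiral in toward $0$ forever with no new zero of $v_x$ (landing in $I_k$), escape to the region $\{H<0\}$ with no new zero (landing in $A_k$), or cross $\{v=0\}$ exactly once more and then get trapped again, landing in $A_{k+1}$. The key structural fact, available from Lemma~\ref{deriveeenergie}, is that $H(u_x,v_x)$ is nonincreasing along trajectories, so if we start with $u_x^2+v_x^2<\sigma^2$ then $H(u_x,v_x)(\overline R)$ is close to $H(0,0)=0$ — more precisely, by continuity of $H$ there is $\delta(\sigma)\to 0$ with $|H(u_x,v_x)(\overline R)|\le\delta(\sigma)$, and from Lemma~\ref{energielevel}(i) the sublevel set $\{H\le\delta(\sigma)\}$ is contained in a ball of radius $\to 0$. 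So for $r\ge\overline R$ as long as $(u_x,v_x)\ne 0$ the trajectory stays in a small neighborhood of the origin where $\overline R$ is already large — in particular $r>r_0$, so Lemma~\ref{estimationangulaire}(iii) applies and the angular variable $\theta_x$ is strictly increasing whenever $H\ge 0$, with $\theta_x'\ge\Theta$.

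First I would fix $k$ and choose $\sigma$ small enough that: (a) $\overline R> r_0$ is forced (using that near the origin $H$ small and $N_x(\overline R)=k$ finite already constrains $\overline R$ — actually one argues that on $[0,r_0]$, $v_x^2-u_x^2\ge\alpha x^2>0$ so $v_x$ has no zero there, hence $k$ zeros occur only for $r>r_0$, and having $H$ as small as $\delta(\sigma)$ at a zero of $v_x$ forces $\overline R$ large via monotonicity and the lower bound of Lemma~\ref{estimationangulaire}); (b) the ball $\{H\le\delta(\sigma)\}$ is small enough that the sign analysis below is valid. Then I would track the trajectory for $r\ge\overline R$. Two cases: either $H(u_x,v_x)(r)$ stays $\ge 0$ for all $r<R_x$, or it hits a negative value at some finite $r_1$. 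In the second case, once $H<0$ the monotonicity of $H$ keeps it negative, and I claim $v_x$ changes sign at most once afterward and then the solution is global with $N_x(\infty)=k$ or $k+1$: the point is that in $\{H<0\}$ near $0$ the nonlinearity dominates, the set $\{H<0\}$ near the origin is contained in the cone $\{|u|<|v|\}$ union its reflection (from Lemma~\ref{energielevel}(iii)–(iv): $H_\epsilon^{-1}(0)$ sits in $\{|u|<|v|\}$ and $H^{-1}_0(0)=\{|u|=|v|\}$), so the trajectory is confined to one such cone or crosses $\{v=0\}$ to pass to the other, and within a cone $\{|u|<|v|\}$ the sign of $v$ is fixed; so at most one more zero of $v_x$, giving $x\in A_k\cup A_{k+1}$. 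In the first case ($H\ge 0$ forever), $\theta_x$ increases at rate $\ge\Theta$ but, because the trajectory is confined to the shrinking ball $\{H\le H(u_x,v_x)(\overline R)\}$ and $H$ is nonincreasing, the trajectory cannot keep sweeping full turns of radius bounded below — so either it spirals into $0$ (then $\lim(u_x,v_x)=0$ at $\rho_x(0)$ and counting the single possible extra crossing gives $x\in I_k\cup I_{k+1}$... here one must be careful to show it is exactly $I_k$ or exactly $I_{k+1}$, i.e. at most one extra zero) or $R_x=\infty$.

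The main obstacle I anticipate is the bookkeeping of exactly how many times $v_x$ vanishes after $\overline R$: one must show it is at most once, so that the outcome is $A_k\cup I_k\cup A_{k+1}$ and not $A_k\cup I_k\cup A_{k+1}\cup A_{k+2}\cup\cdots$. This is where the geometry of the energy levels near $0$ enters crucially: after entering the small ball the trajectory is confined to a region split by the lines $\{|u|=|v|\}$ into four sectors, and one shows that in each "spiraling" passage the trajectory either crosses $\{v=0\}$ (increasing the count by one and then it is trapped in a sub-ball where the argument can only terminate) or it crosses $\{u=0\}$, and the monotone decay of $H$ together with the rotation estimate $\theta_x'\ge\Theta$ forbids completing a further full revolution that would bring a second zero of $v_x$. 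I would make this precise by the following device: let $\overline R'\ge\overline R$ be the first radius (if any) after $\overline R$ with $v_x(\overline R')=0$; then at $\overline R'$ we again have $u_x^2+v_x^2\le$ (radius of $\{H\le H(\overline R)\}$), and by choosing $\sigma$ (hence $\delta(\sigma)$) small we arrange this is $<\sigma'^2$ where $\sigma'$ is a radius small enough that from $\overline R'$ no further zero of $v_x$ is possible before either reaching $0$ or entering $\{H<0\}$ — this last "no further zero" statement is proved by the confinement-to-a-cone argument above, since at $\overline R'$ the point $(u_x,v_x)(\overline R')=(u_x(\overline R'),0)$ lies on the $u$-axis inside $\{|u|<|v|\}^c$, and the flow there pushes it into one cone $\{|u|<|v|\}$ where $v_x$ keeps a constant sign. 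Chaining these observations, $x\in A_k\cup I_k\cup A_{k+1}$.
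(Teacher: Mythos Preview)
Your plan has a genuine gap at its first step, and the rest of the argument rests on it. You claim that ``from Lemma~\ref{energielevel}(i) the sublevel set $\{H\le\delta(\sigma)\}$ is contained in a ball of radius $\to 0$''. This is false. Lemma~\ref{energielevel}(i) gives $H(u,v)\ge A(u^2+v^2)-B$, so $\{H\le\delta\}$ is contained in a ball of radius $\sqrt{(\delta+B)/A}$, which tends to $\sqrt{B/A}>0$, not to $0$. More concretely, the zero level set $H^{-1}(\{0\})$ is a curve through the origin that extends all the way out to $(0,\pm E_0)$ with $E_0=(\Omega-m)^{1/(2(1-p))}$; hence $\{H\le\delta\}$ contains a tube around this curve and does \emph{not} shrink to the origin. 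So your conclusion that ``for $r\ge\overline R$ the trajectory stays in a small neighborhood of the origin'' does not follow, and with it the confinement mechanism you use throughout (the ``spiral in a shrinking ball'', the ``four sectors split by $\{|u|=|v|\}$'', the inductive chaining at the end) collapses.

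This is precisely the difficulty the paper's proof has to overcome. The trajectory \emph{could} in principle drift away from the origin along the thin region $\{0<H<K\}$ while keeping $H$ small and positive. The paper handles this by building a trapping region $\mathcal{D}=\{0<H<K,\ 0<u<M_1\}$ with a carefully chosen wall at $\{u=M_1\}$: the point $(M_1,M_2)$ is placed on $H^{-1}(\{0\})$ so that the strip $\{M_1/2<u<M_1,\ 0<H<K\}$ lies outside the regularization set $\mathcal{R}^1_\epsilon$, hence $\varphi_\epsilon\equiv 1$ there and the full dissipation term $-2u^2/r\cdot(\dots)$ is active. A quantitative comparison of the energy lost crossing from $u=M_1/2$ to $u=M_1$ against the size of $H$ on entry (controlled by $\sigma$ via \eqref{lemeq:sigma}) then forces $H$ to drop below $0$ before the wall is reached --- a contradiction. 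This dissipation-versus-displacement estimate is the missing idea in your proposal; without it one cannot rule out the trajectory escaping along the $\{H\approx 0\}$ tube and acquiring many further zeros of $v$. Your observation that $\{H<0\}\subset\{|v|>|u|\}$ (so no further zero of $v$ once $H<0$) is correct and is indeed what makes the conclusion immediate once one knows the only exit from $\mathcal{D}$ is through $\{H=0\}$ --- but getting that exit statement is the real work.
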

%The parameter $\sigma$ may be chosen independent on $\epsilon.$
%
%
%
\begin{proof}
For the sake of clarity, we remove here the $x$ subscripts. The decay of the energy in lemma \ref{deriveeenergie} makes the result obvious if $H(u,v)(\overline{R}) \leq 0$. By symmetry, we can assume without loss of generality that
\[
	(u,v)(\overline{R}) \in \left\{u>0\right\}\cup\left\{H>0\right\}~\mbox{and}~x>E_0
\]
where $E_0$ is defined in lemma \ref{energielevel}.
Let $(M_1,M_2)$ be the unique point of 
\[
	H^{-1}(\{0\})\cap\{v>u>0\}\cap\{v^2-u^2=(\frac{\Omega-m}{p})^{\frac{1}{p-1}}\}.
\]
It exists since $(\frac{\Omega-m}{p})^{\frac{1}{p-1}}<E_0^2$ (see figure \ref{graph:trapping}). Let $v_1>0$ such that $(M_1/2,v_1)$ is the unique point of $\partial\mathcal{R}^1_{\epsilon}\cap\{v>u>0\}.$ We define
\[
	K := H(M_1/2,v_1)>0.
\]
The parameter $\sigma$ is chosen such that:
\begin{equation}\label{lemeq:sigma}
	0<\sigma  <  \min\{\frac{M_1^3}{4M_2(\frac{\pi(k+2)}{\Theta}+r_0)}, \sqrt{\alpha E_0^2}\}
\end{equation}
and $B(0,\sigma) \subset H^{-1}(-\infty, K)\cap\{u<M_1/2\}$ where $B(0,\sigma)$ is the euclidean ball of $\mathbb{R}^2$ centered in $0$ and of radius $\sigma.$ Let 
\[\mathcal{D} = \left\{(u,v):~0<H(u,v)<K,~0<u<M_1\right\},\]
(see figure \ref{graph:trapping}). We have that
\[
	(u,v)(\overline{R})\in \mathcal{D}.
\]
If $(u,v)$ exits $\mathcal{D}$ crossing the boundary at $\{H=0\}$, we have the result. We now prove that this is the only possible way to exit $\mathcal{D}$. 
\begin{figure}[!ht]
\begin{center}
\includegraphics[scale=0.9]{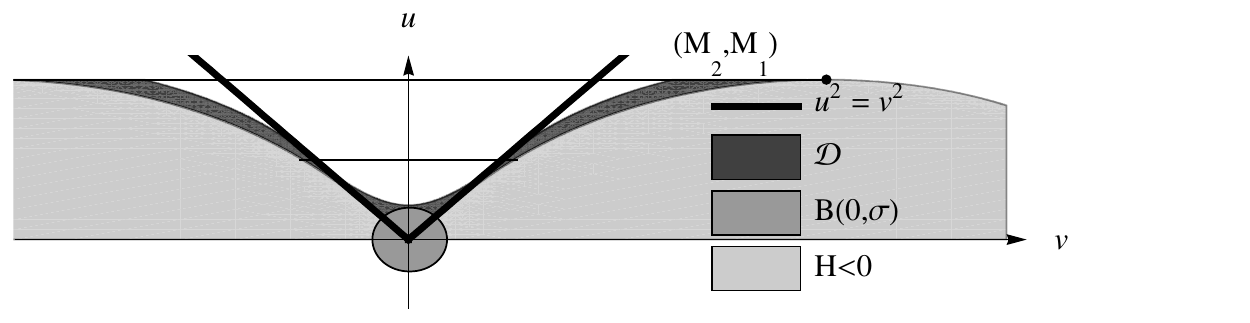}
\caption{Trapping region}
\label{graph:trapping}
\end{center}
\end{figure}
Let us assume by contradiction that $(u,v)$ do not cross the boundary of $\mathcal{D}$ at $\{H=0\}$, then by lemmas \ref{deriveeenergie} and \ref{deriveeangulaire}, $(u,v)$ must exit $\mathcal{D}$ at  $\{u=M_1\}$. 
We define $R''$ by
\[
	R'':= \inf\{r>\overline{R}:~(u,v)(r)\notin \mathcal{D}\}\in(\overline{R},+\infty].
\]
We have $H(u,v)(R'')>0$ by assumption. 
Since $x>E_0$, lemma \ref{estimationangulaire} ensures that 
\[
	v^2(r)+u^2(r)\geq v^2(r)-u^2(r)\geq \alpha x^2\geq \alpha E_0^2
\]
for any $r\in[0,r_0].$ By inequality \eqref{lemeq:sigma}, we get that $\overline{R}>r_0$ and the third point of lemma \ref{estimationangulaire} ensures that $R''<+\infty.$ We have moreover 
\[
	k = N(\overline{R})\leq N(R'')\leq k+1,
\]
lemma \ref{controlenombredetour} ensures that
\[
	k+1\geq N(R'')\geq N(r_0,R'')\geq \lfloor\frac{\Theta}{\pi}(R''-r_0)\rfloor,
\]
so
\begin{equation}\label{eq:estimationradiustrapping}
	R''\leq \frac{\pi(k+2)}{\Theta}+r_0.
\end{equation}
We define now
\[
	R':= \sup\{r\in(\overline{R},R'') |~|u|<M_1/2\}.
\]
It is well-defined and $R'\in(\overline{R},R'')$ because 
\[
	|u(\overline{R})|\leq \sqrt{u^2(\overline{R})+v^2(\overline{R})}< \sigma<M_1/2~\text{and}~u(R'')=M_1.
\]
By lemma \ref{deriveeenergie}, we have that
\[
	0<H(u,v)(r)\leq H(u,v)(\overline{R})<K
\]
and by lemma \ref{estimationangulaire} that
\[
	u(r)\geq 0\geq v(r)
\]
for any $r\in[R',R''].$ By the construction of $\mathcal{D},$  we have that
\[
	\{(u,v):~M_1/2<|u|<M_1,~0<H(u,v)<K\}\cap \mathcal{R}^1_\epsilon=\emptyset
\]
thanks to our choices of $v_1,$ $K$ and the symmetries of $\varphi_\epsilon.$ Thus, we get
\[
	\varphi_\epsilon(u,v)(r) = 1
\]
for any $r\in[R',R''].$ By lemma \ref{deriveeenergie}, inequality \eqref{eq:estimationradiustrapping} and the definitions of $R'$ and $R'',$ we obtain
\begin{align*}
	\lefteqn{H(u,v)(R')-H(u,v)(R'') =-\int_{R'}^{R''}\frac{d}{dr}\left(H(u,v)(r)\right)dr}\\
	&=\int_{R'}^{R''} \frac{2u^2}{r}(p| u^2-v^2|^{p-1}+(m+\Omega))dr\\
	&\geq \frac{M_1^2}{2(\frac{\pi(k+2)}{\Theta}+r_0)}\int_{R'}^{R''} (p| u^2-v^2|^{p-1}+(m+\Omega))dr.
\end{align*}
Then, we have
\begin{align*}
	\lefteqn{M_1/2 = u(R'')-u(R')}\\
	&= \int_{R'}^{R''}\left[ -v(p| u^2-v^2|^{p-1}+(m+\Omega))- \frac{2u}{r}\right]dr\\
	&\leq M_2 \int_{R'}^{R''} (p| u^2-v^2|^{p-1}+(m+\Omega))dr
\end{align*}
and
\begin{align*}
	\lefteqn{H(u,v)(R'')\leq H(u,v)(R')- \frac{M_1^3}{4M_2(\frac{\pi(k+2)}{\Theta}+r_0)}}\\
	&\leq H(u,v)(\overline{R})-\sigma\leq 0.
\end{align*}
This contradicts $H(u,v)(R'')>0$. 
\end{proof}
We show in the next lemma that the solution can be controlled uniformly in some Sobolev norm.
%-----------------------------estimation sobolev.
\begin{lem}\label{sobolev}
Let us assume that there are $\overline{R}>r_0$ and $y>0$ such that
\[
	H(u_x,v_x)(r)>0~\text{and}~\sqrt{u^2_x(r)+v^2_x(r)} \leq y
\] 
for any $r\in[r_0,\overline{R}]$ where $(u_x,v_x)$ is the solution of problem \eqref{systequation:uvreg} with $x>0$. 
Then, for all $s \in (1,\frac{1}{1-p})$, there exists $C>0$ such that for all $r_0<r_1<r_2<\overline{R}:$
\begin{align*}
	\lefteqn{\|u_x^2\|_{W^{1,s}(r_1,r_2)},\|v_x^2\|_{W^{1,s}(r_1,r_2)},\|u_xv_x\|_{W^{1,s}(r_1,r_2)}}\\
	&&\leq C(N_x(r_1,r_2)+ (r_2-r_1)+ 1),
\end{align*}
$C$ does not depend on $\epsilon$ and $x$. 
\end{lem}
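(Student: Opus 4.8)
The plan is to estimate each of the three quadratic quantities $u_x^2$, $v_x^2$, $u_xv_x$ and their derivatives directly from the system \eqref{equation:uvreg}, using the lower bound $H(u_x,v_x)>0$ together with Lemma \ref{energielevel} to control the singular factor $|v^2-u^2|^{p-1}$. Throughout I drop the $x$ subscripts. First I would record the differential identities: from \eqref{equation:uvreg} one gets, for instance,
\[
(u^2)' = 2uu' = -2\frac{2u^2}{r}\varphi_\epsilon(u,v) - 2uv\bigl(p|v^2-u^2|^{p-1}+(m-\Omega)\bigr),
\]
and analogous expressions for $(v^2)'$ and $(uv)'$. Since $H(u,v)>0$ on $[r_0,\overline R]$, point (v) of Lemma \ref{energielevel} gives $u^2\le C_\gamma H(u,v)$ (with $\gamma$ replaced by any fixed positive lower level once we know $H\ge$ something, or more simply using the bound $\sqrt{u^2+v^2}\le y$ together with $r\ge r_0$), and the bounded factor $(m\pm\Omega)$, $\varphi_\epsilon\in[0,1]$, $1/r\le 1/r_0$ cause no trouble. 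The only dangerous term is $|v^2-u^2|^{p-1}$ multiplied by a product like $uv$ or $u^2$; the key observation is that near the zero set $\{|u|=|v|\}$ we have $\varphi_\epsilon=0$ wherever the singularity would bite in the $u$-equation, while the combination $uv\,|v^2-u^2|^{p-1}$ is what must be shown to lie in $L^s$.

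The heart of the matter is therefore to bound
\[
\int_{r_1}^{r_2} \bigl(|u v|\,|v^2-u^2|^{p-1}\bigr)^s\,dr \quad\text{and}\quad \int_{r_1}^{r_2}\bigl(u^2|v^2-u^2|^{p-1}\bigr)^s\,dr
\]
by $C(N(r_1,r_2)+(r_2-r_1)+1)$. Here I would pass to the angular variable $\theta$ of Lemma \ref{deriveeangulaire}: writing $u=\varrho\sin\theta$, $v=\varrho\cos\theta$ with $\varrho^2=u^2+v^2\le y^2$, one has $v^2-u^2=\varrho^2(1-2\sin^2\theta)$ and $|uv|=\tfrac12\varrho^2|\sin2\theta|$, so the integrand is comparable to $\varrho^{2p}|1-2\sin^2\theta|^{p-1}$, which is bounded by $y^{2p}|1-2\sin^2\theta|^{p-1}$. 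Since $H(u,v)>0$ forces (by Lemma \ref{energielevel}(vi), or directly) $\theta'\ge (1-p)(\Omega-m)-\varphi_\epsilon/r\ge (1-p)(\Omega-m)-1/r_0>0$ on $[r_0,\overline R]$ — this is exactly where the hypothesis $r_0>\frac1{(\Omega-m)(1-p)}$ from Lemma \ref{estimationangulaire}(i) is used — I can change variables $r\mapsto\theta$ with a bounded Jacobian $dr = \theta'^{-1}d\theta \le C\,d\theta$, reducing the estimate to
\[
\int \bigl(|1-2\sin^2\theta|^{p-1}\bigr)^s\,d\theta
\]
over an interval of length $\le \theta(r_2)-\theta(r_1)$. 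Because each half-turn of $\theta$ crosses the two zeros of $1-2\sin^2\theta$ and corresponds to a bounded number of sign changes of $v$ (hence is counted by $N(r_1,r_2)+O(1)$), and because the local integral $\int_{-\delta}^{\delta}|\tau|^{(p-1)s}d\tau$ is finite precisely when $(p-1)s>-1$, i.e. $s<\frac1{1-p}$, the total is bounded by $C(N(r_1,r_2)+(r_2-r_1)+1)$, which is the desired bound for $\|(u^2)'\|_{L^s}$ etc.

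Finally I would assemble the pieces: the $L^s$ norms of $u^2,v^2,u v$ themselves on $(r_1,r_2)$ are controlled by $y^2(r_2-r_1)^{1/s}\le C(r_2-r_1+1)$, and the $L^s$ norms of the derivatives are controlled by the sum of a "regular" part — $\int_{r_1}^{r_2}\bigl(\tfrac{C}{r_0}+C\bigr)^s\,dr\le C(r_2-r_1)$, using $\varphi_\epsilon\le1$, $u^2\le y^2$, and the boundedness of $m\pm\Omega$ — and the "singular" part estimated above. Adding these gives the $W^{1,s}$ bound with a constant $C$ depending only on $p,s,\Omega,m,r_0,y$, hence independent of $\epsilon$ and $x$, as claimed. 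The main obstacle is the singular integral: making rigorous the change of variables to $\theta$ and the bookkeeping that relates the number of passages through the zero set of $1-2\sin^2\theta$ to $N_x(r_1,r_2)$ plus a bounded error, while checking that the constant produced is genuinely uniform in $\epsilon$ (which holds because the bounds on $\varphi_\epsilon$, on $\theta'$, and on $H$ are all $\epsilon$-independent once $r\ge r_0$).
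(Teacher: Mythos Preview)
Your proposal is correct and follows essentially the same route as the paper: compute the derivatives of $u^2$, $v^2$, $uv$ from \eqref{equation:uvreg}, isolate the singular factor $|v^2-u^2|^{p-1}$, rewrite it as $\rho^{2(p-1)}|1-2\sin^2\theta|^{p-1}$, use the uniform lower bound $\theta'\ge\Theta$ on $[r_0,\overline R]$ from Lemma~\ref{estimationangulaire} to change variables $dr\mapsto d\theta$, and then invoke the integrability of $|1-2\sin^2\theta|^{s(p-1)}$ over a period together with the count $N_x(r_1,r_2)$ of half-turns. Two minor remarks: your aside about point~(v) of Lemma~\ref{energielevel} is not needed here (the hypothesis is only $H>0$, not $H\ge\gamma$), and indeed the bound $\sqrt{u^2+v^2}\le y$ is what both you and the paper actually use; and your comment that $\varphi_\epsilon=0$ near $\{|u|=|v|\}$ helps with the singularity is a red herring, since the singular term $uv\,|v^2-u^2|^{p-1}$ carries no factor of $\varphi_\epsilon$ --- but you handle that term correctly anyway.
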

\begin{proof}For the sake of clarity, we remove here the $x$ subscripts. We have:
\begin{align*}
	\frac{du^2}{dr}	&= 2uv(-p| u^2-v^2|^{p-1} +(\Omega-m)) - \frac{2u^2\varphi(u,v)}{r},
\end{align*}
so, for $r_0<r_1<r_2<\overline{R},$
\begin{align*}
	\int_{r_1}^{r_2}\left|\frac{du^2}{dr}\right|^sdr	&\leq C\left( \int_{r_1}^{r_2}\left|\frac{uv}{(u^2+v^2)^{1-p}}\right|^s\left|\frac{u^2-v^2}{u^2+v^2}\right|^{s(p-1)}dr+(r_2-r_1)\right)\\
										&\leq C\left( \int_{r_1}^{r_2}| 1-2\sin^2\theta(r)|^{s(p-1)}dr + (r_2-r_1)\right)\\
										&\leq C\left( \int_{r_1}^{r_2}\frac{1}{\Theta}| 1-2\sin^2\theta(r)|^{s(p-1)}\theta'(r)dr + (r_2-r_1)\right)
\end{align*} 
by lemma \ref{estimationangulaire}. Since $s(p-1)>-1,$ the integral
\[
	\int_0^\pi | 1-2\sin^2\theta|^{s(p-1)}d\theta 
\]
converges and
\[
	\int_{r_1}^{r_2}\left|\frac{du^2}{dr}\right|^sdr\leq C(N(r_1,r_2)+ (r_2-r_1)+ 2).
\]
The same proof works as well for $v^2$ and  $uv$.
\end{proof}
%
%-----------------------------ContinuitŽ du flot.
We study in the following lemma, the dependance of the solutions on the initial conditions. This is a very important point in the shooting method and that is the reason why we introduced a regularized problem.
\begin{lem}\label{continuity}
Let $(\overline{u},\overline{v})$ be a solution of \eqref{equation:uvreg} defined on an interval $[r_1,r_2]$ such that 
\[
	(\overline{u},\overline{v})(r)\neq (0,0)
\] 
for all $r\in[r_1,r_2].$ For all $\eta >0$, there exists $\delta >0,$ such that if $(u,v)$ is also a solution of \eqref{equation:uvreg} with 
\[
	\|(u,v)(r_1)-(\overline{u},\overline{v})(r_1)\|< \delta,
\] 
we have 
\[
	\|(u,v)(r)-(\overline{u},\overline{v})(r)\|< \eta
\] 
for all $r\in[r_1,r_2]$ where $\|.\|$ is the euclidian norm of $\mathbb{R}^2$.
\end{lem}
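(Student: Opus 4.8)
The plan is to prove continuous dependence on initial data by a standard Grönwall-type argument, but carried out carefully because the right-hand side of \eqref{equation:uvreg} is only locally Lipschitz away from the bad set $\{|u|=|v|\}$ and from $r=0$, while on a neighborhood of $\{|u|=|v|\}$ the system is autonomous and Hamiltonian (hence smooth there). Since $(\overline u,\overline v)$ is a solution in the extended sense on $[r_1,r_2]$ with $(\overline u,\overline v)\neq(0,0)$ throughout, its trajectory is a compact subset $\Gamma$ of $\mathbb{R}^2\setminus\{0\}$, and $r_1>0$ or else $\overline u(r_1)=0$; in any case the compact tube $\{r\}\times\Gamma$ stays away from the origin. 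First I would cover $[r_1,r_2]$ by finitely many subintervals on each of which either the solution stays in the region $|\,|u|-|v|\,|\geq c>0$ (where $F$ and hence the vector field is $C^1$ on a neighborhood of the relevant compact set, with a uniform Lipschitz constant $L$), or the solution lies in the Hamiltonian zone $\mathcal{R}^1_\epsilon$ where the vector field is autonomous and smooth (again uniformly Lipschitz on the relevant compact set). On each such piece the classical Cauchy--Lipschitz continuous-dependence estimate applies: $\|(u,v)(r)-(\overline u,\overline v)(r)\|\leq \|(u,v)(r_1')-(\overline u,\overline v)(r_1')\|\,e^{L(r-r_1')}$, where one must also absorb the factor $2u/r\cdot\varphi_\epsilon$ by noting $r$ is bounded below by $r_1>0$ on the non-Hamiltonian pieces (on the Hamiltonian pieces that term is absent).

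Next I would chain these estimates: choosing $\delta$ small enough at $r_1$ propagates smallness to the end of the first subinterval, which serves as the new initial datum for the second, and so on; since there are finitely many pieces and each contributes a finite multiplicative constant, a sufficiently small $\delta$ at $r_1$ yields $\|(u,v)(r)-(\overline u,\overline v)(r)\|<\eta$ on all of $[r_1,r_2]$. To make the covering argument legitimate one uses a compactness/uniform-continuity argument: the distance from $\Gamma$ to $\{0\}$ is positive, so shrink $\eta$ if necessary so that the $\eta$-tube around the graph of $(\overline u,\overline v)$ avoids the origin; then on the portions of $\Gamma$ bounded away from $\{|u|=|v|\}$ the nonlinearity $x\mapsto p|x|^{p-1}$ is smooth, while near $\{|u|=|v|\}$ one works inside $\mathcal{R}^1_\epsilon$ where, by the construction in subsection~\ref{subsection:regul}, the dynamics is governed by the smooth autonomous system \eqref{equation:uvautonome}; Lebesgue-number considerations give a finite subcover.

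The main obstacle is handling the transitions between the "Hamiltonian zone" and the "regular zone," i.e. the points where the trajectory $\Gamma$ meets $\partial\mathcal{R}^1_\epsilon$ or $\partial\mathcal{R}^2_\epsilon$, and more delicately any point of $\Gamma$ lying exactly on $\{|u|=|v|\}$. On $\mathcal{R}^1_\epsilon$ the vector field is still $C^1$ (it equals the smooth field of \eqref{equation:uvautonome} on $\mathcal{R}^2_\epsilon$ and interpolates smoothly via $\varphi_\epsilon$ in between), so in fact the whole region $\{r\geq r_1\}\times(\mathbb{R}^2\setminus\{0\})$ carries a locally Lipschitz field and the only real issue is obtaining a \emph{uniform} Lipschitz constant along the compact graph of $(\overline u,\overline v)$; this follows by covering that compact graph by finitely many balls on each of which the field is $C^1$. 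One should also note that, because $(\overline u,\overline v)$ is only a solution in the extended sense, it may be non-$C^1$ at finitely many radii $R_1<\dots<R_n$; but continuity of $(\overline u,\overline v)$ across these points plus continuity of the comparison estimate on each closed subinterval $[R_i,R_{i+1}]$ lets us glue, since the value at $R_i$ is an admissible initial datum for the next interval and the local uniqueness of Lemma~\ref{existanceunicite} guarantees the pieced-together trajectory is the genuine solution. With the uniform Lipschitz bound in hand, the iterated Grönwall estimate closes the argument.
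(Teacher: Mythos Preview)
Your argument has a genuine gap: you repeatedly assert that the vector field is smooth (or at least locally Lipschitz) inside the Hamiltonian zone $\mathcal{R}^2_\epsilon$, but this is false. The autonomous system \eqref{equation:uvautonome} still carries the factor $p|v^2-u^2|^{p-1}$, and since $p-1<0$ this term blows up as $(u,v)$ approaches $\{|u|=|v|\}$. The regularization of subsection~\ref{subsection:regul} does \emph{not} smooth the nonlinearity; it only removes the explicit $r$-dependence near the singular set, making the system autonomous and Hamiltonian there so that trajectories sit on level sets of $H$. That is precisely why Lemma~\ref{existanceunicite} has to construct the local solution by hand via the conserved energy rather than by Cauchy--Lipschitz. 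Consequently no uniform Lipschitz constant exists on any neighborhood of a point of $\Gamma\cap\{|u|=|v|\}$, and the Gr\"onwall chain you describe cannot close on any subinterval where the reference trajectory touches $\{|u|=|v|\}$.

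The paper's route avoids this obstacle entirely. It invokes Lemma~\ref{sobolev}, which shows that the quadratic quantities $u^2$, $v^2$, $uv$ are bounded in $W^{1,s}$ (for $1<s<1/(1-p)$) uniformly in the initial data; the point is that the singular factor $|v^2-u^2|^{p-1}$ is integrable in this range of exponents once one changes variables via the lower bound $\theta'\geq\Theta$. By compact embedding into $C^0$, any sequence of solutions with converging initial data has a subsequence whose squares and product converge uniformly, and the local uniqueness of Lemma~\ref{existanceunicite} then identifies the limit with $(\overline u,\overline v)$. This compactness-plus-uniqueness argument is the one in \cite{balabane2003} that the paper cites, and it is needed exactly because a direct Lipschitz/Gr\"onwall estimate is unavailable across the set $\{|u|=|v|\}$.
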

The proof is the same as in \cite{balabane2003} and follows from lemmas \ref{existanceunicite} and \ref{sobolev}.

%
%
%-----------------------------Comportement ˆ l'infini.
We will now study the behaviors at infinity.
\begin{lem}
\label{comportementinfini}
For all $x>0$, we have:
\begin{enumerate}[(i)]
	\item either $N_x(R_x) < \infty,$
	\item or $R_x=+\infty,$ $N_x(\infty) = \infty$, $\underset{r\rightarrow \infty}\lim H(u_x,v_x)=0$ and $(u_x,v_x)\neq 0$ on $[0,\infty)$.
\end{enumerate}
\end{lem}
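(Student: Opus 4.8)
The plan is to prove the dichotomy by assuming that alternative (i) fails, i.e. $N_x(R_x)=\infty$, and deducing every assertion of (ii). Write $(u,v)$ for $(u_x,v_x)$; recall $(u,v)(r)\ne(0,0)$ on $[0,R_x)$ by Lemma \ref{exitenceglobale} and that $r\mapsto H(u,v)(r)$ is nonincreasing by Lemma \ref{deriveeenergie}. The first step is to show $H(u,v)>0$ on $[0,R_x)$: the explicit formula for $H$ gives $\{H\le 0\}\setminus\{0\}\subset\{|v|>|u|\}\subset\{v\ne 0\}$, so if $H(u,v)(r_1)\le 0$ for some $r_1\in[0,R_x)$, then, by monotonicity of $H$ and nonvanishing of $(u,v)$, $v$ has no zero on $[r_1,R_x)$, while on the compact set $[0,r_1]$ the zeros of $v$ are isolated — at a zero $r^*$ one has $(u,v)(r^*)\ne 0$, hence $u(r^*)\ne 0$ and $v'(r^*)=u(r^*)(-F(v^2-u^2)-(m+\Omega))\ne 0$, apart from the finitely many non-$\mathcal{C}^1$ points of Definition \ref{def:extended} — so $N_x(R_x)<\infty$, a contradiction. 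A similar use of Lemma \ref{estimationangulaire}(ii) gives $r_0<R_x$, since otherwise $v^2-u^2\ge\alpha x^2>0$ on all of $[0,R_x)$. Hence $H(u,v)>0$ and $\theta'\ge\Theta>0$ on $[r_0,R_x)$ by Lemma \ref{estimationangulaire}(iii); as $v(r)=0$ exactly when the continuous lift $\theta(r)$ lies in $\frac\pi2+\pi\mathbb{Z}$ and $N_x(0,r_0)<\infty$, the assumption $N_x(R_x)=\infty$ forces $\theta(r)\to+\infty$ (otherwise $\theta$ being bounded and increasing would make $N_x(R_x)$ finite).

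Next I would show $R_x=+\infty$. Since $\theta(r)\to+\infty$ and $u(r)=0$ exactly when $\theta(r)\in\pi\mathbb{Z}$, there is an increasing sequence $r_k\in[r_0,R_x)$ with $r_k\to R_x$ and $u(r_k)=0$; at each such point $H(0,v(r_k))=H(u,v)(r_k)>0$. But the one-variable identity $H(0,v)=-\tfrac12|v|^{2p}+\tfrac{\Omega-m}{2}v^2$ shows $\{v:H(0,v)>0\}=\{|v|>v_1\}$ with $v_1:=(\Omega-m)^{-1/(2(1-p))}>0$, so $\|(u,v)(r_k)\|=|v(r_k)|>v_1$. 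If $R_x$ were finite, this would contradict $\lim_{r\to R_x}(u,v)(r)=(0,0)$ given by Lemma \ref{exitenceglobale}; hence $R_x=+\infty$, so also $N_x(\infty)=\infty$ and $(u,v)\ne(0,0)$ on $[0,\infty)$.

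It remains to prove $\lim_{r\to\infty}H(u,v)=0$, which I expect to be the main obstacle. Since $H(u,v)$ is nonincreasing, bounded below (Lemma \ref{energielevel}(i)) and positive, the limit $\ell:=\lim_{r\to\infty}H(u,v)$ exists and is $\ge 0$; assume $\ell>0$. Integrating Lemma \ref{deriveeenergie} gives
\[
	\int_{r_0}^{\infty}\frac{2u^2\varphi(u,v)}{r}\bigl(p|v^2-u^2|^{p-1}+(m+\Omega)\bigr)\,dr=H(u,v)(r_0)-\ell<\infty,
\]
and I would contradict this by localising near the zeros $s_n$ of $v$ in $[r_0,\infty)$. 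Two ingredients are needed. First, $H(u,v)\ge\ell$ forces $H(u(s_n),0)\ge\ell$, hence $u(s_n)^2\ge u_\ell^2>0$, where $u_\ell$ solves $\tfrac12u_\ell^{2p}+\tfrac{\Omega+m}{2}u_\ell^2=\ell$. Second — and this is the crucial point — the computation $H_\epsilon(0,w)=w^2\bigl(-\tfrac12|w|^{2(p-1)}-\tfrac{\Omega+m}{2}+\epsilon\bigr)<0$ for $w\ne 0$ (recall $\epsilon<m$), together with the symmetry $\varphi(|u|,|v|)=\varphi(|v|,|u|)$, shows $\varphi\equiv 1$ on the whole punctured axes $(\{u=0\}\cup\{v=0\})\setminus\{0\}$, hence on a neighbourhood of the compact set $\{(w,0):u_\ell\le|w|\le C\}$ of width bounded below (here $C$ is the uniform bound of Lemma \ref{exitenceglobale}). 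Near $s_n$ one has $|v^2-u^2|\ge u_\ell^2/2$, so the right-hand side of \eqref{equation:uvreg} is bounded and $(u,v)(r)$ stays within a fixed distance of $(u(s_n),0)$ for $|r-s_n|\le\delta$ with $\delta>0$ uniform; shrinking $\delta$, the intervals $J_n:=[s_n-\delta,s_n+\delta]$ are disjoint and satisfy $\varphi(u,v)\equiv 1$, $u^2\ge u_\ell^2/4$ on $J_n$. Finally, consecutive zeros of $v$ obey $s_{n+1}-s_n\le\pi/\Theta$, so $s_n\le\mathrm{const}\cdot n$ and
\[
	\int_{r_0}^{\infty}\frac{2u^2\varphi(u,v)}{r}\bigl(p|v^2-u^2|^{p-1}+(m+\Omega)\bigr)\,dr\;\ge\;(m+\Omega)\frac{u_\ell^2}{2}\sum_{n\ge n_0}\frac{2\delta}{s_n+\delta}\;=\;+\infty,
\]
contradicting the finiteness above. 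Therefore $\ell=0$, which completes (ii).

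The hard part is this last step: the regularisation factor $\varphi$ could a priori switch off the dissipative term $-\tfrac{2u^2\varphi}{r}(\cdots)$ exactly where $u^2$ is large, destroying the energy decay. What prevents this is that membership in $\mathcal{R}^1_\epsilon$ requires \emph{both} $H_\epsilon(u,v)\ge 0$ and $H_\epsilon(v,u)\ge 0$, and the second condition fails identically on the punctured coordinate axes; so $\varphi=1$ there, not merely near $\{|u|=|v|\}$, and this is what makes the lower bound on each $J_n$ uniform in $\ell$ and in $\epsilon$. Everything else is bookkeeping with the monotonicity of $H$, the isolated-zeros property of $v$, and the uniform angular lower bound $\theta'\ge\Theta$.
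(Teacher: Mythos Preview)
Your proof is correct and follows a somewhat different route from the paper's. The paper argues by exhausting four cases: (a) $R_x<\infty$, (b) $R_x=\infty$ with $H$ eventually $\le 0$, (c) $R_x=\infty$ with $H\ge\gamma>0$ throughout, and (d) the residual case, which is exactly alternative~(ii). You instead assume $N_x(R_x)=\infty$ and derive all of~(ii) directly. Two concrete differences stand out. To rule out $R_x<\infty$, the paper invokes Lemma~\ref{energielevel}(vi) to bound $\theta'$ from above inside a cone $|u|\le\tan(\overline{\theta})|v|$, so that each traversal of that cone takes a fixed positive time; you instead look at the zeros of $u$ and use that $H(0,v)>0$ forces $|v|>v_1$, which is more elementary and bypasses Lemma~\ref{energielevel}(vi) entirely. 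For the step $\lim H=0$, both arguments manufacture a divergent harmonic-type sum by finding, on each lap of the trajectory, a region where $\varphi\equiv 1$ and $u^2$ is bounded below: the paper uses the angular sectors $\{u>|v|>0\}\cap(\mathcal{R}^1_\epsilon)^c$, while you use fixed-width neighbourhoods of the zeros of $v$, exploiting directly that the condition $H_\epsilon(v,u)\ge 0$ in the definition of $\mathcal{R}^1_\epsilon$ fails on the punctured axis $\{v=0\}$. Your localisation is arguably more transparent; the paper's is more geometric. One small gap to fill: making the intervals $J_n$ disjoint by ``shrinking $\delta$'' requires a uniform lower bound on $s_{n+1}-s_n$, i.e.\ an \emph{upper} bound on $\theta'$ on $\{H\ge\ell\}$; this follows from Lemma~\ref{deriveeangulaire} combined with $u^2+v^2\ge D_\ell$ from Lemma~\ref{energielevel}(v), and is precisely the bound the paper itself uses in its cases~(a) and~(c).
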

\begin{proof}
For the sake of notation simplicity, we remove here the $x$ subscripts. We recall that $R$ is the largest radius before the solution hits $0$. We study now the different cases.
\begin{enumerate}[(a)]
	\item We assume that $R<\infty$. By lemma \ref{exitenceglobale}, we have 
	\[
		H(u,v)(r)\geq0
	\]
	for all $r\in(0,R).$ We have by lemmas \ref{energielevel} and \ref{deriveeangulaire} that for any $R\geq r\geq r_0$ such that $|u|(r)\leq \tan(\overline{\theta})|v|(r):$
	\begin{align*}
		\theta(r)' 	&=	  \frac{pH(u,v) + (1-p)(v^2(\Omega-m))+u^2(\Omega+m))- 2uv\varphi(u,v)/r}{u^2+v^2}\\
				&\leq \frac{pH(u,v)}{\overline{v}^2}+(1-p)(\Omega+m)+\frac{2|uv|}{r_0(u^2+v^2)}\\
				&\leq \frac{pH(0,x)}{\overline{v}^2}+(1-p)(\Omega+m)+\frac{1}{r_0}=:C.
	\end{align*}
	Let us assume by contradiction that $N(R)=+\infty.$ We define the sequences $(r^{ini}_n)$ and $(r^{fin}_k)$ such that for all $k$:
	\[
		0=r^{ini}_0<r^{fin}_0<\dots <r^{ini}_k<r^{fin}_k<\dots
	\]
	and 
	\[
		\underset{k\in\mathbb{N}}{\cup}~(r^{ini}_k,r^{fin}_k)=\{r\in(0,R): |u|(r)< \tan(\overline{\theta})|v|(r)\}.
	\]
	These sequences are well-defined for all $k$ and 
	\[
		\underset{k\rightarrow+\infty}{\lim}~r^{ini}_k=R
	\]
	because lemma \ref{estimationangulaire} ensures that
	\[
		\theta'(r)\geq\Theta.
	\]
	Nevertheless, we have that for any $k>0$
	\[
		\overline{\theta}\leq\int_{r^{ini}_k}^{r^{fin}_k}\theta'(r)dr\leq C(r^{ini}_k-r^{fin}_k)
	\]
	by lemma \ref{energielevel}. This is impossible so $N(R)<+\infty.$
	\item Let us assume now that $R=+\infty$ and that there exists $r_1\geq0$ such that 
	\[H(u,v)(r_1)\leq 0 \mbox{~and~} (u,v)(r_1)\neq0.\] 
	Lemma \ref{deriveeenergie} ensures that $H(u,v)(r)<0$ for all $r>r_1$. $(u,v)$ will not cross the set $\{v=0\}$ anymore and $N(\infty) < \infty.$
	\item Let us assume next that $R=+\infty$ and that there exists $\gamma >0$ such that $H(u,v)\geq\gamma$ on $(0,R)$. Let $r_0<r_1<r_2.$ We have by lemmas \ref{deriveeenergie} and \ref{energielevel}:
	\begin{align*}
		\lefteqn{H(u,v)(r_1)-H(u,v)(r_2) = -\int_{r_1}^{r_2}\frac{d}{dr}H(u,v)(r)dr}\\
		&=\int_{r_1}^{r_2}\frac{2u^2\varphi(u,v)}{r}(p|u^2-v^2|^{p-1}+(\Omega-m))dr\\
		&\geq2(\Omega-m)\int_{r_1}^{r_2}\frac{u^2\varphi(u,v)}{r}dr.
	\end{align*}
	Let the sequences $(r^{ini}_n)$ and $(r^{fin}_k)$ be such that:
	\[
		r_0<r^{ini}_0<r^{fin}_0<\dots <r^{ini}_k<r^{fin}_k<\dots
	\]
	and 
	\[
		\underset{k\in\mathbb{N}}{\cup}~(r^{ini}_k,r^{fin}_k)=\{r>0: (u,v)(r)\in\{u>|v|>0\}\cap (\mathcal{R}^1_\epsilon)^c\}.
	\]
	These sequences are well-defined because lemma \ref{estimationangulaire} ensures that
	\[
		\theta'(r)\geq\Theta
	\]
	for any $r\geq r_0.$ We get by construction of these sequences and lemma \ref{energielevel} that
	\[
		\varphi(u,v)(r)=1 ~\text{and}~2u^2(r)\geq u^2(r)+v^2(r)\geq D_\gamma
	\]
	for all $r\in\underset{k\in\mathbb{N}}{\cup}~(r^{ini}_k,r^{fin}_k)$ so,
	\begin{align*}
		\lefteqn{H(u,v)(r_1)-H(u,v)(r_2)}\\
		&\geq  D_\gamma(\Omega-m)\underset{k\in \mathcal{A}(r_1,r_2)}{\Sigma}~\int_{r^{ini}_k}^{r^{fin}_k}\frac{dr}{r}\\
		&\geq D_\gamma(\Omega-m)\underset{k\in \mathcal{A}(r_1,r_2)}{\Sigma}~\log \left(\frac{r^{fin}_k}{r^{ini}_k}\right)
	\end{align*}
	where
	\[
		\mathcal{A}(r_1,r_2):=\{k\in\mathbb{N}:~(r^{ini}_k,r^{fin}_k)\subset (r_1,r_2)\}.
	\]
	Moreover, we have by lemma \ref{deriveeangulaire} that for any $r\geq r_0:$
	\begin{align*}
		\theta(r)'\leq \frac{pH(0,x)}{D_\gamma}+(1-p)(\Omega+m)+\frac{1}{r_0}=:c_1
	\end{align*}
	so that
	\[
		\int^{r_k^{fin}}_{r_k^{ini}}\theta'(r)dr\leq c_1(r_k^{fin}-r_k^{ini}).
	\]
		The same argument gives $c_2>0$ such that
	\[
		r^{ini}_k\leq c_2k.
	\]
	Let us remark that if $\epsilon=0,$ we would have
	\[
		\int^{r_k^{fin}}_{r_k^{ini}}\theta'(r)dr= \pi/2.
	\]
	In the regularized case, there exists a constant $\pi/2\geq c_0>0$ such that
	\[
		c_0\leq\int^{r_k^{fin}}_{r_k^{ini}}\theta'(r)dr.
	\]	
	We get then:
	\[
		\log  \left(\frac{r^{fin}_k}{r^{ini}_k}\right)\geq \log\left(1+\frac{c_0 }{c_1r^{ini}_k}\right)\geq \log\left(1+\frac{c_0 }{c_1c_2k}\right).
	\]
	Since the series $\Sigma \log\left(1+\frac{c_0 }{c_1c_2k}\right)$ diverges, there are $\epsilon_0>0$ and for all $N\in\mathbb{N}$, $M(N)\in\mathbb{N}$ such that
	\[
		M(N)>N
	\]
	and
	\[
		\underset{N\leq k\leq M(N)}{\Sigma} ~\log\left(1+\frac{c_0 }{c_1c_2k}\right)\geq \epsilon_0.
	\]
	Then, we get that:
	\begin{align*}
		\lefteqn{0=\underset{N\rightarrow+\infty}{\lim}H(u,v)(r^{ini}_N)-H(u,v)(r^{fin}_{M(N)})}\\
		&\geq D_\gamma(\Omega-m)\underset{k\in \mathcal{A}(r^{ini}_N,r^{fin}_{M(N)})}{\Sigma}~\log\left(1+\frac{c_0 }{c_1c_2k}\right)\\
		&\geq D_\gamma(\Omega-m)\underset{N\leq k\leq M(N)}{\Sigma}~\log\left(1+\frac{c_0 }{c_1c_2k}\right)\\
		&\geq D_\gamma(\Omega-m)\epsilon_0>0.
	\end{align*}
	We get the wanted contradiction.
	\item The remaining case is the one of the second point of the lemma.
\end{enumerate}
\end{proof}
%
%
%-------------------------------------------------------------------------------------------------------------------------------------------------
%----------------------------------------------------------The shooting method
\subsection{Topological results}
We are now able to give some topological properties of the $A_k$ and $I_k$ sets as in \cite{balabane2003}.
\begin{lem}\label{lem:topological}
For all $k\in \mathbb{N}$,
\begin{enumerate}[(i)]
	\item $A_k$ is an open set,\label{open}
	\item $A_k$ is bounded,\label{Abounded}
	\item $I_k$ is bounded,\label{Ibounded}
	\item $\sup A_k \in I_{k-1}\cup I_k,$ \label{supA}
	\item $\sup I_k \in I_k,$ \label{supI}
	\item if  $x \in I_k$ then there exists a neighborhood $V$ of $x$ such that  $V\subset A_k\cup I_k \cup A_{k+1}.$\label{voisinage}
\end{enumerate}
\end{lem}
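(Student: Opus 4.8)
The plan is to establish the six properties of Lemma~\ref{lem:topological} in an order that lets later properties reuse earlier ones, following the scheme of \cite{balabane2003} but taking care of the extra subtleties coming from the set $\{|u|=|v|\}$ and the regularization. All proofs rest on three ingredients: the monotonicity of $H(u_x,v_x)$ (Lemma~\ref{deriveeenergie}), the uniform angular rotation estimate (Lemma~\ref{estimationangulaire}), and the continuous dependence on initial data for the regularized system (Lemma~\ref{continuity}), together with the trichotomy of Lemma~\ref{comportementinfini}.

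\textbf{Openness of $A_k$ and the neighborhood property (\ref{open}), (\ref{voisinage}).} For $x\in A_k$ the solution has $\lim_{r\to\infty}H(u_x,v_x)<0$, so there is $R$ with $H(u_x,v_x)(R)<0$ and $N_x(R)=k$, and by Lemma~\ref{deriveeenergie} and Remark~\ref{rem:shootingsets} the solution exists globally, stays away from $0$, and never returns to $\{v=0\}$ after $R$; moreover the $k$ zeros of $v_x$ in $(0,R)$ are simple (at a zero of $v_x$ one has $v_x'\ne 0$ from \eqref{equation:uvreg} since $(u_x,v_x)\ne 0$). Continuous dependence (Lemma~\ref{continuity}) on $[0,R]$ then shows that for $y$ close to $x$, $(u_y,v_y)(R)$ is close to $(u_x,v_x)(R)$, hence $H(u_y,v_y)(R)<0$ and $N_y(R)=k$; this gives $y\in A_k$, proving (\ref{open}). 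For (\ref{voisinage}), if $x\in I_k$ then $(u_x,v_x)(r)\to(0,0)$ as $r\to\rho_x(0)$ with $N_x(\rho_x(0))=k$; pick $\overline R<\rho_x(0)$ with $u_x^2(\overline R)+v_x^2(\overline R)<\sigma^2$ (the $\sigma$ from Proposition~\ref{trapping} for index $k$) and $N_x(\overline R)=k$. By Lemma~\ref{continuity} on $[0,\overline R]$, nearby $y$ satisfy $N_y(\overline R)=k$ and $u_y^2(\overline R)+v_y^2(\overline R)<\sigma^2$, so Proposition~\ref{trapping} gives $y\in A_k\cup I_k\cup A_{k+1}$.

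\textbf{Boundedness (\ref{Abounded}), (\ref{Ibounded}).} Both follow from Proposition~\ref{proprietedenroulement}: $N_x(r_0,\rho_x(\gamma))\to\infty$ as $x\to\infty$. For $x$ large, $\rho_x(\gamma)>r_0$, and the solution has already made more than $k+1$ half-turns across $\{v=0\}$ before energy drops to $\gamma>0$ (hence before it can hit $0$ or have $H\to$ a nonpositive limit). Thus $N_x(\rho_x(0))>k$ and $N_x(\infty)>k$ whenever defined, so neither $A_k$ nor $I_k$ can contain arbitrarily large $x$; this bounds both sets. (One also uses Lemma~\ref{comportementinfini} to know that for large $x$ the solution falls in one of the two listed alternatives, with winding number exceeding $k$.)

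\textbf{The supremum properties (\ref{supA}), (\ref{supI}).} Let $x^*=\sup A_k$, finite by (\ref{Abounded}) and nonnegative; since $A_k$ is open, $x^*\notin A_k$. Take $x_n\in A_k$, $x_n\uparrow x^*$. Each $(u_{x_n},v_{x_n})$ is global with $N_{x_n}(\infty)=k$ and eventually in $B(0,\sigma)$ (for $\sigma$ as in Proposition~\ref{trapping}, index $k$), because once $H<0$ the trajectory spirals into $0$ by Lemmas~\ref{deriveeenergie}, \ref{deriveeangulaire}. Choosing $\overline R_n$ with $N_{x_n}(\overline R_n)=k$, $u^2+v^2<\sigma^2$ there, and passing to the limit via Lemma~\ref{continuity}: either the bound persists at $x^*$ at some finite radius, whence Proposition~\ref{trapping} forces $x^*\in A_k\cup I_k\cup A_{k+1}$, and since $x^*\notin A_k$ and $x^*=\sup A_k$ rules out $A_{k+1}$ (points of $A_{k+1}$ near $x^*$ would have to be $<x^*$... here one argues $x^*\notin A_{k+1}$ because that set is open and $x_n\notin A_{k+1}$), we get $x^*\in I_k$; or the half-turn count can drop by one in the limit (a zero of $v$ escaping to the right as the energy limit touches $0$), giving $x^*\in I_{k-1}$. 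Hence $\sup A_k\in I_{k-1}\cup I_k$. For (\ref{supI}), let $y^*=\sup I_k$, finite by (\ref{Ibounded}); take $y_n\in I_k$, $y_n\uparrow y^*$. Using (\ref{voisinage}) at each $y_n$ and the fact that $I_k$ is the ``boundary'' case, one shows $y^*\notin A_{k+1}$ and $y^*\notin A_k$ (both open, and a neighborhood of $y_n$ misses points of $I_{k}$ above... rather: points just above $y_n$ lie in $A_k\cup I_k\cup A_{k+1}$, and if some interval $(y^*,y^*+\delta)\subset A_{k+1}$ one derives a contradiction with $y_n\in I_k$ via continuity). Then Proposition~\ref{trapping} applied at $y^*$ (with $\overline R$ obtained as a limit of the $\overline R_n$, using Lemma~\ref{continuity}) gives $y^*\in I_k$.

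\textbf{Main obstacle.} The delicate point is (\ref{supA}): controlling what happens to the number of crossings of $\{v=0\}$ under the limit $x\to x^*$. The crossing count is lower semicontinuous but can jump down exactly when the limiting trajectory has $\lim H=0$ and a zero of $v_x$ slides off to $r=\rho_x(0)$; disentangling whether one loses zero or one crossing (hence landing in $I_k$ versus $I_{k-1}$) requires combining Lemma~\ref{continuity} on compact subintervals of $\{|v_0^2-u_0^2|>0\}$ with the trapping region of Proposition~\ref{trapping} and the angular lower bound of Lemma~\ref{estimationangulaire} to pin down the behavior near the terminal radius. The extra work relative to \cite{balabane2003} is that all of this must be done for solutions in the extended sense whose trajectories may touch $\{|u|=|v|\}$, where one leans on the hamiltonian structure of the regularized system established in Lemmas~\ref{existanceunicite}--\ref{exitenceglobale}.
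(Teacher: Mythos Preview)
Your treatment of points (\ref{open}), (\ref{Abounded}), (\ref{Ibounded}) and (\ref{voisinage}) is essentially the paper's argument and is fine. The problem is your proof of (\ref{supA}) (and, by the same token, (\ref{supI})). You claim that for $x_n\in A_k$ the trajectory ``once $H<0$\dots\ spirals into $0$'' and is therefore eventually inside $B(0,\sigma)$. This is false: by definition of $A_k$ one has $\lim_{r\to\infty}H(u_{x_n},v_{x_n})<0$, and since $H(0,0)=0$ the trajectory is bounded \emph{away} from the origin for all large $r$. So there is no reason at all for $(u_{x_n},v_{x_n})$ to enter $B(0,\sigma)$ with $N_{x_n}=k$, and your scheme of choosing $\overline R_n$ inside that ball and passing to the limit collapses. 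The subsequent alternative ``or the half-turn count can drop by one'' is then just a description of what you would like to happen, not an argument.

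The paper proceeds differently and avoids this trap. One first observes $x^*:=\sup A_k\notin\bigcup_n A_n$ since this union is open. Assume by contradiction that $x^*\notin\bigcup_n I_n$; then Lemma~\ref{comportementinfini} forces $R_{x^*}=\infty$, $H(u_{x^*},v_{x^*})>0$ everywhere, and $N_{x^*}(\infty)=\infty$. Fix $\overline R>r_0$ large enough that $\lfloor\Theta(\overline R-r_0)/\pi\rfloor>k$. On one hand $H(u_{x^*},v_{x^*})(\overline R)>0$, and since $(u_{x^*},v_{x^*})$ never vanishes on $[0,\overline R]$ Lemma~\ref{continuity} applies and gives $H(u_y,v_y)(\overline R)>0$ for $y$ close to $x^*$. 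On the other hand, for any $y\in A_k$ one must have $H(u_y,v_y)(\overline R)<0$, because otherwise Lemma~\ref{controlenombredetour} would give $N_y(r_0,\overline R)>k=N_y(\infty)$. Taking $y\in A_k$ close to $x^*$ yields the contradiction. Hence $x^*\in I_j$ for some $j$; now Proposition~\ref{trapping} (exactly your argument for (\ref{voisinage})) gives a neighborhood of $x^*$ contained in $A_j\cup I_j\cup A_{j+1}$, and since it meets $A_k$ one gets $j\in\{k-1,k\}$. The proof of (\ref{supI}) is identical: $y^*:=\sup I_k\notin\bigcup_n A_n$ because the $A_n$ are open and disjoint from $I_k$, the same contradiction forces $y^*\in I_j$ for some $j$, and the neighborhood of $y^*$ meets $I_k$, so $j=k$.
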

The proof is slightly different from the one of \cite{balabane2003} but follows essentially their ideas.  We give it here for the sake of completeness.
\begin{proof}
Let $k\in\mathbb{N}$. Lemma \ref{continuity} ensures that $A_k$ is open. By proposition \ref{proprietedenroulement}, $I_k$ and $A_k$ are bounded. Since $\underset{n\in \mathbb{N}}{\cup}~ A_n$ is open, we easily get that $\sup A_k \notin \underset{n\in \mathbb{N}}{\cup}~ A_n$ whenever $\sup A_k$ is well-defined. Let us prove now that $x:=\sup A_k$ belongs to $\underset{n\in \mathbb{N}}{\cup}~  I_n.$  We assume that $x\notin \cup I_n$, then setting $\overline{R} > \frac{\Theta(2+k)}{\pi}+r_0,$ we have $H(u_x,v_x)(\overline{R}) >0$ because of lemma \ref{comportementinfini}. Nevertheless, there exists $y\in A_k$ as close to $x$ as we want such that $H(u_y,v_y)(\overline{R})<0$ by lemma \ref{controlenombredetour}. This contradicts the continuity of the flow of lemma \ref{continuity}. So, $x$ belongs to $\underset{n\in \mathbb{N}}{\cup}~ I_n$. Proposition \ref{trapping} ensures then point \eqref{supA}. The same arguments give point \eqref{supI}. Thanks to Proposition \ref{trapping}, we immediately get point \eqref{voisinage}. 
\end{proof}
We proved the key lemmas of \cite{balabane2003}, thus, we get the following result:
\begin{prop}
For all $\epsilon \in (0,m)$, all $k\in \mathbb{N}$, there exists a solution $(u_x,v_x) $ of (\ref{systequation:uvreg}) such that:
\begin{enumerate}[(i)]
	\item $R_x<\infty$, i.e. $(u_x,v_x)(R_x) = 0$,
	\item $N_x(0,R_x) = k.$
\end{enumerate}
\end{prop}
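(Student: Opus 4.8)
The plan is to derive the existence of the claimed solution directly from the topological structure of the sets $A_k$ and $I_k$ established in Lemma~\ref{lem:topological}, combined with the "winding" Proposition~\ref{proprietedenroulement} and the trapping Proposition~\ref{trapping}. The target statement says that for each $k$ there is an initial datum $x$ with $R_x<\infty$ and $N_x(0,R_x)=k$, i.e. $x\in I_k$; so it suffices to prove $I_k\neq\emptyset$ for every $k\in\mathbb{N}$. This is the standard conclusion of the Balabane--Dolbeault--Ounaies scheme \cite{balabane2003}, and I would reproduce their induction on $k$.

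First I would treat the base case $k=0$. For $x>0$ small, Lemma~\ref{energielevel} gives $H(0,x)$ small, the solution stays near $0$, does not wind (by Lemma~\ref{deriveeangulaire} the angular speed is controlled but the trajectory never reaches $\{v=0\}$ before $H$ becomes negative), so $x\in A_0$; hence $A_0\neq\emptyset$. By Lemma~\ref{lem:topological}\eqref{Abounded}, $A_0$ is bounded, so $x_0^*:=\sup A_0$ is well-defined, and by \eqref{supA} it lies in $I_{-1}\cup I_0=I_0$ (there is no $I_{-1}$). Thus $I_0\neq\emptyset$. For the inductive step, assume $I_k\neq\emptyset$ and pick $x\in I_k$. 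By Lemma~\ref{lem:topological}\eqref{voisinage} there is a neighborhood $V$ of $x$ contained in $A_k\cup I_k\cup A_{k+1}$. The key point, exactly as in \cite{balabane2003}, is that $V$ cannot be contained in $A_k\cup I_k$ alone: if it were, then since $A_k$ is open and $I_k$ is "thin" (its complement is open near such points by the trapping proposition), one reaches a contradiction with the fact that $x=\sup I_k$ would have points of $A_{k+1}$ on its right — more precisely, one uses that $\sup A_k\in I_{k-1}\cup I_k$ together with Proposition~\ref{proprietedenroulement}: for $x$ large, $N_x(r_0,\rho_x(\gamma))$ is large, so $x$ cannot lie in $A_k$ for any fixed $k$, forcing the "staircase" of sets $A_0,I_0,A_1,I_1,\dots$ to continue indefinitely. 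Concretely: $A_{k+1}\cap V\neq\emptyset$, so $A_{k+1}\neq\emptyset$, it is bounded by \eqref{Abounded}, hence $\sup A_{k+1}$ exists and lies in $I_k\cup I_{k+1}$ by \eqref{supA}; one then argues it must be in $I_{k+1}$ (otherwise $\sup A_{k+1}\le \sup I_k$, contradicting that points of $A_{k+1}$ sit to the right of $I_k$). Either way $I_{k+1}\neq\emptyset$.

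Having produced $x\in I_k$, I would read off the conclusion: by Definition~\ref{setsinitialdata}, $x\in I_k$ means $\lim_{r\to\rho_x(0)}(u_x,v_x)(r)=(0,0)$ with $N_x(\rho_x(0))=k$. By Remark~\ref{rem:shootingsets} and Lemma~\ref{comportementinfini}, this forces $R_x=\rho_x(0)<\infty$ and $(u_x,v_x)(R_x)=0$, with $N_x(0,R_x)=k$. Since $x>0$, this $(u_x,v_x)$ is precisely the solution of \eqref{systequation:uvreg} claimed by the proposition, and $\epsilon\in(0,m)$ was arbitrary.

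The main obstacle is the inductive step, specifically ruling out that the neighborhood $V$ of a point of $I_k$ is swallowed by $A_k\cup I_k$ without meeting $A_{k+1}$; this is where the genuine input of Proposition~\ref{proprietedenroulement} (arbitrarily large winding for large $x$) and Proposition~\ref{trapping} (the trapping region $\mathcal{D}$ catches trajectories and sends them to $A_k\cup I_k\cup A_{k+1}$) is needed, and it is the only place where one must be careful that the singular, regularized setting has not broken the mechanism of \cite{balabane2003}. Since all the qualitative lemmas have been arranged to mirror theirs, the remaining combinatorial bookkeeping of the $A_k$/$I_k$ staircase is then routine, and I would simply cite \cite{balabane2003} for the details rather than rewrite them.
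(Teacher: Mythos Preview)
Your proposal is correct and follows essentially the same inductive scheme as the paper: show $A_0\neq\emptyset$, use $\sup A_k\in I_{k-1}\cup I_k$ and the neighborhood property \eqref{voisinage} at $\sup I_k$ to produce points in $A_{k+1}$, then iterate. The only imprecision is that you initially pick an arbitrary $x\in I_k$ before switching to $x=\sup I_k$; the paper works with $\sup I_k$ from the outset, which is what makes the argument ``points to the right of $\sup I_k$ in $V$ lie in $A_{k+1}$'' clean, since they cannot lie in $I_k$ (by definition of supremum) nor in $A_k$ (since $\sup A_k\le\sup I_k$).
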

The proof of this proposition can be found in \cite{balabane2003}.  We give it here for the sake of completeness.
\begin{proof}
The goal of the proof is to show that $I_k\ne\emptyset$ for any $k$.  We will show this by induction on $k$. Let us remark first that $(0,E_0]\subset A_0.$ Then, $\sup A_0$ exists and belongs to $I_0$ by lemma \ref{lem:topological}. Thus, $\sup I_0$ exists and belongs to $I_0$ by point \eqref{supI} of lemma \ref{lem:topological} and $\sup A_0\leq \sup I_0$. Point \eqref{voisinage} ensures that there exists $\delta>0$ such that
	\[
		(\sup I_0-\delta,\sup I_0+\delta)\subset A_0\cup I_0\cup A_1.
	\]
	Thus, we obtain that
	\[
		(\sup I_0,\sup I_0+\delta)\subset A_1 \ne\emptyset.
	\]
	 We assume now that there is $k\in \mathbb{N}\backslash\{0\}$ and $\delta_{k-1}>0$ such that
	\[
		(\sup I_{k-1},\sup I_{k-1}+\delta_{k-1} )\subset A_k \ne\emptyset.
	\]
	Then, by lemma \ref{lem:topological}, $\sup A_k$ exists and belongs to $I_k$ since $\sup A_{k-1}\leq\sup I_{k-1}<\sup A_k$. We get also that
	\[
		\sup A_k \leq\sup I_k\in I_k.
	\]
	Then, point \eqref{voisinage} ensures that there exists $\delta_k>0$ such that
	\[
		(\sup I_k-\delta_k,\sup I_k+\delta_k)\subset A_k\cup I_k\cup A_{k+1}
	\]
	so,
	\[
		(\sup I_{k},\sup I_{k}+\delta_{k} )\subset A_{k+1} \ne\emptyset.
	\]
	We proved by induction that $A_k$ and $I_k$ are not empty.
\end{proof}

\section{Proof of the existence of localized solutions to problem \eqref{equation:stati2}}\label{section:theo}
We give here the proof of theorem \ref{resultshooting}.
\begin{proof}

Let us fix $k\in\mathbb{N}$. We write here the $\epsilon$ superscripts to emphasize the dependence of the solutions on $\epsilon$. Let $(u^\epsilon_{x_\epsilon}, v^\epsilon_{x_\epsilon})$ be a solution of (\ref{systequation:uvreg}) such that $N^\epsilon_{x_\epsilon}(R_{x_\epsilon}^\epsilon) = k$ with $\varphi_\epsilon$ defined in section \ref{subsection:regul}.  From now on, we will not write the subscript ${x_\epsilon}$ anymore for the sake of notation simplicity. We have : 
\begin{equation}\label{eq:prooftheo1}
	R^\epsilon = \rho^\epsilon(0)\leq \frac{(k+1)\pi}{\Theta}+r_0 = \overline{R}
\end{equation}
by lemma \ref{deriveeangulaire}. 
Let $\gamma>0$ such that $q:=2(1-p)(\Omega+m)C_\gamma-1>0,$ either $H(u^\epsilon, v^\epsilon)(r_0)$ is smaller than $\gamma$ or not. In that latter case, we have because of lemma \ref{controledecroissanceenergie}:
\begin{align*}
	\lefteqn{\left((R^\epsilon)^{2(\Omega+m)C_\gamma}\gamma\right)^{1-p}-\left(r_0^{2(\Omega+m)C_\gamma}H(u^\epsilon, v^\epsilon)(r_0)\right)^{1-p} }\\
 &\geq-2p(1-p)C_\gamma^p(R^\epsilon)^{q}\int_{r_0}^{R^\epsilon }| 1-2\sin^2\theta^\epsilon|^{p-1}dr.
\end{align*}
This and inequality \ref{eq:prooftheo1} give us an uniform bound on $H(u^\epsilon, v^\epsilon)(r_0)$ which does not depends on $\epsilon$. We extend now the functions $u^\epsilon$ and $v^\epsilon$ by zero on $[R^\epsilon,\overline{R}].$ By lemmas \ref{energielevel} and \ref{deriveeenergie}, we obtain a uniform bound on $(u^\epsilon, v^\epsilon)$ in $\mathcal{C}^0([r_0,\overline{R}]).$  We get then that  
$
	(u^\epsilon)^2, (v^\epsilon)^2~\text{and}~u^\epsilon v^\epsilon
$
are bounded sequences of  $W^{1,s}([r_0,\overline{R}])$ by lemma \ref{sobolev}. 

Up to the extraction, there exist  a decreasing subsequence $(\epsilon_n)$ which tends to $0,$ $U, V, W \in W^{1,s}([r_0,\overline{R}])$ such that: 
	\begin{align*}
		U_n := (u^{\epsilon_n})^2	&\underset{n\rightarrow \infty}{\longrightarrow} U\\
		V_n := (v^{\epsilon_n})^2 	&\underset{n\rightarrow \infty}{\longrightarrow} V\\
		W_n :=u^{\epsilon_n}v^{\epsilon_n} 	&\underset{n\rightarrow \infty}{\longrightarrow} W,
	\end{align*}
	in $\mathcal{C}^0([r_0,\overline{R}]).$ We can then construct a function $(\overline{u},\overline{v})$ defined on $[r_0,\overline{R}]$ which is a solution of the system of equations \eqref{equation:uv} taking care of the sign of $W$ such that $\overline{u}^2 = U$, $\overline{v}^2 = V$, $\overline{u}(r_0)>0$ and $\overline{v}(r_0)>0$. $(\overline{u},\overline{v})$ satisfies $(\overline{u},\overline{v})(\overline{R})=(0,0).$ 
	
	It remains to study the problem on $[0,r_0].$ We define  $F(x) = (u_x,v_x)(r_0),$ where $(u_x,v_x)$ is a solution of \eqref{systequation:uv}. $F$ is a one-to-one continuous function from $[E_0,\infty)$ into $F([E_0,\infty))$ where $E_0$ comes from lemma \ref{energielevel}. Let us remark that we have constructed the regularized systems so that 
	\[
		\varphi_\epsilon(u^\epsilon, v^\epsilon)(r)=1
	\]
	for all $r\in[0,r_0].$
	We have that $((u^\epsilon, v^\epsilon)(r_0))$ is a bounded sequence and
	\[
		(v^\epsilon)^2(r_0)-(u^\epsilon)^2(r_0) \geq \alpha x_\epsilon^2
	\] 
	by lemma \ref{estimationangulaire}, so $(x_\epsilon)$ is bounded. Up to another extraction, we can assume that $(x_{\epsilon_n})$ converges to $x>0.$ Since $F$ is continuous, we get that $(u_x,v_x)(r_0) = (\overline{u},\overline{v})(r_0).$ We have constructed a solution $(\overline{u},\overline{v})$ of problem \eqref{systequation:uv}.

It just remains us to show that the function obtained still have his winding number $N$ satisfying
\[
	N=N^\epsilon(R^\epsilon)=k.
\]
Let $c_+\in H^{-1}(\mathbb{R}^-_*)\cap\{ (u,v)|u<0<v\}$ and $\delta>0$ such that $B(c_+,\delta) \subset H^{-1}(\mathbb{R}^-_*)\cap\{ (u,v)|u<0<v\}.$ We write $c_- = -c_+.$ For every $\epsilon \geq0$, we join $(u^\epsilon, v^\epsilon)(r_0)$ and $0$ to define the closed curve $\gamma^\epsilon$. We define also $\gamma^0$ from $(\overline{u},\overline{v})$. Setting: 
\[N(\gamma) = -\frac{1}{2i\pi}\int_{\gamma}\left(\frac{1}{z-c_+}+\frac{1}{z+c_+}\right),\]
Lebesgue theorem shows that $N(\gamma^{\epsilon}) = k$ converge to $N({\gamma^{0}}).$ We extend now $(\overline{u},\overline{v})$ by zero and we get the result of the theorem.
\end{proof}
%
%
%
%
%
%%%%%%%%%%%%%%%%%%%%%%%%%%%% MIT bag model
\section{The M.I.T. bag model limit}\label{section:mit}
Let $k\in\mathbb{N}.$ We denote by $(u_p,v_p)$ the solution of problem \eqref{systequation:uv} given by theorem \ref{resultshooting} which crosses $k$ times the set $\{v=0\}\backslash\{(0,0)\}$ and $R_p$ is the radius at which it hits $0$. We give here the proof of theorem \ref{theo:mitlimit}.

\begin{proof}
We fix $\eta\in(0,\overline{p}),$ $\Omega> \Omega_{\overline{p}-\eta}$ and $p\in(0,\overline{p}-\eta).$ We have by lemma \ref{controlenombredetour}:
\[R_p \leq \frac{(k+1)\pi}{\Theta}+r_0=\overline{R}.\]
Lemma \ref{estimationangulaire} ensures that $\overline{R}$ does not depend on $p$. We extend $(u_p,v_p)$ by zero on $[R_p,\overline{R}].$  We denote by $H_p$ the functions introduced in section \ref{section:preliminaryresult} to insist on the dependence on $p$.
\begin{lem} 
\label{convergencep}
There exist $g\in \mathcal{C}^0([r_0,\overline{R}])$, $(u_0,v_0)\in \mathcal{C}^0(\overline{\{|g|>0\}})$ and a decreasing sequence $(p_n)$ which converges to zero such that:
\begin{enumerate}[(i)]
	\item $(v^2_{p_n}-u^2_{p_n})$ converges uniformly to $g$ in $\mathcal{C}^0([r_0,\overline{R}]),$
	\item $(u_{p_n},v_{p_n})$ converges uniformly to $(u_0,v_0)$ on every compact interval of \newline ${\{|g|>0\}},$ $v^2_0-u^2_0 =g$ and $(u_{p_n},v_{p_n})$ is a bounded sequence of $\mathcal{C}^0([r_0,\overline{R}]),$ 
	\item $(u_0,v_0)$ is a solution of the free Dirac equation
		\[\left\{\begin{array}{rl}
			u'+\frac{2u}{r}=&v(\Omega-m)\\
			v'=&-u(\Omega+m)
		\end{array}\right.\] 
		on ${\{|g|>0\}}$.
\end{enumerate}
\end{lem}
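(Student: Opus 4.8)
The plan is to extract a convergent subsequence using the uniform bounds already at our disposal, and then identify the limit as a solution of the free Dirac equation on the set where the nonlinear term degenerates. First I would recall that by Lemma \ref{estimationangulaire} and the choice of $\overline{R}$, the extended solutions $(u_p,v_p)$ are uniformly bounded in $\mathcal{C}^0([r_0,\overline{R}])$ independently of $p$ (the bound on $H_p(u_p,v_p)(r_0)$ is uniform, exactly as in the proof of Theorem \ref{resultshooting}, via Lemma \ref{controledecroissanceenergie} and \eqref{eq:prooftheo1}), and hence by Lemma \ref{sobolev} the sequences $u_p^2,\ v_p^2,\ u_pv_p$ are bounded in $W^{1,s}([r_0,\overline{R}])$ for some fixed $s\in(1,\tfrac{1}{1-p_0})$, uniformly in $p\in(0,\overline{p}-\eta)$. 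By the compact embedding $W^{1,s}\hookrightarrow \mathcal{C}^0$ on the compact interval $[r_0,\overline{R}]$, I would extract a decreasing sequence $(p_n)\to 0$ along which $u_{p_n}^2\to U$, $v_{p_n}^2\to V$, $u_{p_n}v_{p_n}\to W$ uniformly. Set $g := V - U$; then $(v_{p_n}^2 - u_{p_n}^2)\to g$ uniformly, giving (i).

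Next, on $\{|g|>0\}$ I would define $(u_0,v_0)$ from $(U,V,W)$ by taking the appropriate square roots with signs dictated by $W$ and by continuity (as in the construction of $(\overline{u},\overline{v})$ in the proof of Theorem \ref{resultshooting}); this is where one uses that $\{|g|>0\}$ is open, so locally $v_{p_n}^2-u_{p_n}^2$ keeps a fixed sign and stays bounded away from $0$, so the nonlinearity $|v_{p_n}^2-u_{p_n}^2|^{p_n-1}$ is locally uniformly bounded and, since $p_n\to 0$, $|v_{p_n}^2-u_{p_n}^2|^{p_n-1}\to |g|^{-1}$ uniformly on compact subsets of $\{|g|>0\}$. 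Hence on any compact $K\subset\{|g|>0\}$ the right-hand sides of \eqref{equation:uv} converge uniformly; combined with the uniform bound on $(u_{p_n},v_{p_n})$ this gives equicontinuity on $K$, and Arzel\`a--Ascoli plus a diagonal argument over an exhaustion of $\{|g|>0\}$ by compacts yields uniform convergence of $(u_{p_n},v_{p_n})$ to some continuous $(u_0,v_0)$ on every compact of $\{|g|>0\}$, with $u_0^2=U$, $v_0^2=V$ there and $v_0^2-u_0^2=g$; this is (ii). Finally, passing to the limit in the integral form of \eqref{equation:uv} on each such compact (using $p_n\to 0$, so $p_n|v_{p_n}^2-u_{p_n}^2|^{p_n-1}\to 0$ uniformly on compacts of $\{|g|>0\}$), the nonlinear term drops out and $(u_0,v_0)$ solves
\[
\left\{\begin{array}{rl}
u'+\frac{2u}{r}=&v(\Omega-m)\\
v'=&-u(\Omega+m)
\end{array}\right.
\]
on $\{|g|>0\}$, which is (iii).

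I expect the main obstacle to be the construction of a single coherent continuous function $(u_0,v_0)$ on all of $\{|g|>0\}$ with the right signs: the individual connected components of $\{|g|>0\}$ must be handled so that the sign choices for $u_0$ and $v_0$ (the square roots of $U$ and $V$) are made consistently, matching $W=u_0v_0$, and one must check that $(u_{p_n},v_{p_n})$ does not oscillate in sign within a component — this follows because on a compact subinterval of $\{|g|>0\}$ where, say, $g>0$, the component $v_{p_n}$ cannot vanish for $n$ large (otherwise $v_{p_n}^2-u_{p_n}^2$ would be $\leq 0$ there, contradicting uniform convergence to $g>0$), so $v_{p_n}$ keeps a fixed sign, and then $u_{p_n}$'s sign is pinned by $W_n$ wherever $u_{p_n}\neq 0$. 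A secondary technical point is ensuring that the Sobolev bound of Lemma \ref{sobolev} is genuinely uniform in $p$: one applies it with a single exponent $s\in(1,\tfrac{1}{1-p_0})$ valid for all $p\leq p_0:=\overline{p}-\eta$, and the constant there depends only on $\Theta,\alpha,r_0,\overline{R}$ and the uniform $\mathcal{C}^0$ bound, all of which are $p$-independent by Lemma \ref{estimationangulaire} and its remark. The rest is routine passage to the limit.
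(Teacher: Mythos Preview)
There is a genuine gap in your argument, and it lies exactly where you flagged a ``secondary technical point'': the uniformity in $p$ of the Sobolev bound from Lemma~\ref{sobolev}. The admissible range for the exponent there is $s\in(1,\tfrac{1}{1-p})$, and as $p\to 0$ this interval collapses: $\tfrac{1}{1-p}\to 1$. There is therefore \emph{no} single $s>1$ valid for all $p\in(0,p_0]$; your choice $s\in(1,\tfrac{1}{1-p_0})$ guarantees $s<\tfrac{1}{1-p_0}$ but not $s<\tfrac{1}{1-p}$ for $p<p_0$, which is what the lemma requires. Even letting $s=s_p\to 1^+$ does not help, because the constant in the proof of Lemma~\ref{sobolev} contains the factor $\int_0^\pi|1-2\sin^2\theta|^{s(p-1)}d\theta$, which diverges as $s(1-p)\to 1^-$. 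So Lemma~\ref{sobolev} cannot produce a $W^{1,s}$ bound on $u_p^2,v_p^2,u_pv_p$ that is uniform as $p\to 0$, and hence your route to point~(i) breaks down.

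The paper bypasses this entirely by a direct computation you did not use: differentiating $v_p^2-u_p^2$ along \eqref{equation:uv} gives
\[
\frac{d}{dr}(v_p^2-u_p^2)=4\Bigl(\frac{u_p^2}{r}-\Omega\,u_pv_p\Bigr),
\]
in which the singular term $F(v^2-u^2)$ has cancelled completely. Once $(u_p,v_p)$ is shown to be uniformly bounded in $\mathcal{C}^0([r_0,\overline{R}])$, this identity gives equicontinuity of $v_p^2-u_p^2$ for free, and Ascoli yields~(i). Note also that the uniform $\mathcal{C}^0$ bound on $(u_p,v_p)$ is not an immediate consequence of the uniform bound on $H_p(u_p,v_p)$, since the coercivity constants in Lemma~\ref{energielevel}(i) are a priori $p$-dependent; the paper gives a short contradiction argument exploiting the explicit form of $H_p$ as $p\to 0$. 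After~(i) is established, your treatment of~(ii) and~(iii) on compact subsets of $\{|g|>0\}$ is essentially the same as the paper's and is fine.
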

\begin{proof}
Let $\gamma>0$. The arguments of the proof of theorem \ref{resultshooting} ensure that the sequence $(H_p(u_p^2,v^2_p))_p$ is bounded on $[r_0,\overline{R}]$ uniformly in $p$. We claim that $(u_p,v_p)_p$ is bounded on $[r_0,\overline{R}]$ uniformly in $p$ too. Let us assume by contradiction that $(u_{p},v_{p})$ is not bounded. Up to a subsequence, there exists $(r_{p_n})_n\in [r_0,\overline{R}]^{\mathbb{N}}$ such that $(u_{p_n},v_{p_n})(r_{p_n})=:(u_n,v_n)$ satisfies\begin{equation*}
\begin{array}{l}
(\Omega+m)u_n^2 + (\Omega-m)v_n^2\underset{n\rightarrow\infty}{\longrightarrow}\infty,\\
H_{p_n}(u_n,v_n)\leq C,
\end{array}
\end{equation*}
for some constant $C>0$, so that 
\[(v^2_n-u^2_n)|v^2_n-u^2_n|^{p_n-1}\underset{n\rightarrow\infty}{\longrightarrow}\infty.\]
$v^2_n-u^2_n$ is then nonnegative for $n$ big enough and $v^2_n-u^2_n\underset{n\rightarrow\infty}{\longrightarrow}\infty,$ thus 
\[H_{p_n}(u_n,v_n)=-\frac{1}{2}|v^2_n-u^2_n|^{p_n}+ \frac{\Omega-m}{2}(v^2_n-u^2_n)+\Omega u^2_n\underset{n\rightarrow\infty}{\longrightarrow}\infty.\]
This is the wanted contradiction. Moreover, we have 
\[\frac{d}{dr}(v^2_{p}-u^2_{p})(r) = 4\left(\frac{u^2_{p}}{r}-\Omega u_{p}v_{p}\right),\]
so that $v^2_{p}-u^2_{p}$ is equicontinuous and bounded on $[r_0,\overline{R}].$ Ascoli's theorem shows the first point. On every compact interval of $\{|g|>0\},$ $(u_{p_n},v_{p_n})$ is also equicontinuous and bounded. Ascoli's theorem gives us the second one. The remaining is immediate.  
\end{proof}
Let $R^i_{p}$ be the $i$-th radius at which  $(u_p,v_p)$ crosses the set $\{|u|=|v|\}$, $\widetilde{R}^i_p$ the $i$-th radius at which  $(u_p,v_p)$ crosses  the set $\{uv=0\}$, where $i$ belongs to $\{1,\dots,2k\}.$ Up to extraction, there exist $(R^i_0)_i,$ $(\widetilde{R}^i_0)_i$ such that:
\[\begin{array}{l}R^i_{p_n}\underset{n\rightarrow\infty}{\longrightarrow}R^i_0 ~\mbox{and}~g(R^i_0)=0,\\
		\widetilde{R}^i_{p_n}\underset{n\rightarrow\infty}{\longrightarrow}\widetilde{R}^i_0,
\end{array}\]
and $r_0\leq R^1_0\leq \widetilde{R}^1_0\leq R^2_0\leq \dots \leq \widetilde{R}^{2k}_0.$
%%%%%%%%%%%%%%%%%%%%%%%%%%%%%%%%%%%%%%%%%%%%%%%%%%%%%%%%%%%%%%%%%%%%%%%%%%%%%%%%
%%%%%%%%%%%%%%%%%%%%%%%%%%%%     i even
\begin{lem}
For all $i$ even, $\emptyset \ne(R^i_0,R^{i+1}_0)\subset\{g>0\}$ and $\emptyset \ne(r_0,R^{1}_0)\subset\{g>0\}$.
\end{lem}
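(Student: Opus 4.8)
The plan is to work on the interval $[r_0,\overline R]$ where the uniform estimates of Lemma~\ref{convergencep} and Lemma~\ref{estimationangulaire} are available, and to track the sign of $g = \lim (v_{p_n}^2 - u_{p_n}^2)$ through the ordered sequence of limiting radii $r_0 \le R^1_0 \le \widetilde R^1_0 \le R^2_0 \le \cdots$. The point of Lemma~\ref{estimationangulaire} is that for $r \ge r_0$ with $H \ge 0$ the angular velocity $\theta'$ stays bounded below by $\Theta > 0$, so the trajectory of $(u_{p_n}, v_{p_n})$ rotates monotonically around the origin; consequently between two consecutive crossings of $\{uv = 0\}$ (i.e. on $(\widetilde R^{i}_{p_n}, \widetilde R^{i+1}_{p_n})$) the solution stays in a single open quadrant, and between two consecutive crossings of $\{|u| = |v|\}$ it stays in a single one of the four sectors cut out by the diagonals. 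On $(r_0, R^1_0)$ the relevant sector is $\{|u| < |v|\}$, on which $v^2 - u^2 > 0$; I would pass this inequality to the limit to get $g \ge 0$ there, and similarly on $(R^i_0, R^{i+1}_0)$ for even $i$. So the ``$\subset \{g \ge 0\}$'' part is essentially monotone-rotation bookkeeping; the real content is the strict positivity $\{g > 0\}$ and the nonemptiness.

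For strict positivity I would argue by contradiction: if $g(r^*) = 0$ for some $r^*$ strictly inside $(r_0, R^1_0)$ (or inside $(R^i_0, R^{i+1}_0)$), then $(u_0, v_0)(r^*)$ lies on the diagonal $\{|u| = |v|\}$. On any compact subinterval of $\{|g| > 0\}$ we have uniform convergence and $(u_0,v_0)$ solves the free Dirac system of Lemma~\ref{convergencep}(iii), which is a linear, hence non-degenerate, autonomous-in-the-leading-part system; a solution of it cannot be tangent to or rest on the diagonal without the rotation forcing it to cross, so a zero of $g$ in the open interval would have to be an isolated crossing of the diagonal — contradicting that the \emph{next} crossing radius is $R^1_0$ (resp. $R^{i+1}_0$). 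More carefully: by definition $R^1_{p_n}$ is the first radius where $(u_{p_n},v_{p_n})$ hits $\{|u|=|v|\}$, so on $(r_0, R^1_{p_n})$ one has $v_{p_n}^2 - u_{p_n}^2 > 0$ with a sign that is locally uniform; combining the lower bound $\theta' \ge \Theta$ (which prevents the trajectory from lingering near the diagonal) with the energy monotonicity of Lemma~\ref{deriveeenergie} and the quadrant estimates of Lemma~\ref{estimationangulaire}, one gets that $v_{p_n}^2 - u_{p_n}^2$ is bounded below by a positive constant on any compact subinterval $[r_0 + \delta, R^1_0 - \delta]$, uniformly in $n$, whence $g \ge c(\delta) > 0$ there. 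Letting $\delta \to 0$ gives $(r_0, R^1_0) \subset \{g > 0\}$.

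Nonemptiness, $R^1_0 > r_0$, follows from Lemma~\ref{estimationangulaire}(i)--(ii): for $r \in [0, r_0]$ we have $v_{p_n}^2(r) - u_{p_n}^2(r) \ge \alpha x_{p_n}^2 > 0$, and since the crossing radii satisfy $R^1_{p_n} > r_0$ for every $n$ (the solution cannot reach the diagonal before $r_0$ by that same estimate), the limit satisfies $R^1_0 \ge r_0$; strict inequality comes from the quantitative lower bound $v_{p_n}^2 - u_{p_n}^2 \ge \alpha x_{p_n}^2$ together with a uniform modulus of continuity (the derivative formula $\frac{d}{dr}(v_p^2 - u_p^2) = 4(u_p^2/r - \Omega u_p v_p)$ gives an $n$-independent Lipschitz bound near $r_0$), so $v_{p_n}^2 - u_{p_n}^2$ stays positive on a fixed interval $(r_0, r_0 + \delta_0)$, forcing $R^1_0 \ge r_0 + \delta_0 > r_0$. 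For even $i$, nonemptiness of $(R^i_0, R^{i+1}_0)$ is similar once one knows $g$ changes sign exactly at each $R^i_0$: between two consecutive diagonal crossings the rotation rate $\theta' \ge \Theta$ forces an angular sweep of at least a fixed amount before the next crossing, which bounds $R^{i+1}_{p_n} - R^i_{p_n}$ below by a positive constant independent of $n$ (using also the energy decay to keep $H$ from vanishing too soon), and this lower bound passes to the limit.

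I expect the main obstacle to be the strict positivity claim, i.e. ruling out that $g$ touches zero at an interior point of $(R^i_0, R^{i+1}_0)$ — one must exploit the uniform angular lower bound $\theta' \ge \Theta$ to convert ``the trajectory cannot dwell near the diagonal'' into an honest $n$-uniform positive lower bound on $v_{p_n}^2 - u_{p_n}^2$ on compact subintervals, rather than a merely pointwise limiting statement, and then the harmless technical point that on the closure $\overline{\{|g|>0\}}$ the limit $(u_0,v_0)$ is only known to be continuous, so one argues on compact subintervals of the open set and takes $\delta \to 0$ at the end.
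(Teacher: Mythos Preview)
Your argument for nonemptiness of $(R^i_0,R^{i+1}_0)$ for even $i$ has the inequality backwards. A lower bound $\theta'\ge\Theta$ gives an \emph{upper} bound on the time needed to sweep the fixed angle $\pi/2$ between consecutive diagonal crossings: $\pi/2=\theta_{p_n}(R^{i+1}_{p_n})-\theta_{p_n}(R^{i}_{p_n})\ge\Theta\,(R^{i+1}_{p_n}-R^{i}_{p_n})$, i.e.\ $R^{i+1}_{p_n}-R^{i}_{p_n}\le\pi/(2\Theta)$, not $\ge$. So the step ``which bounds $R^{i+1}_{p_n}-R^i_{p_n}$ below by a positive constant'' does not follow from what you wrote. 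The paper avoids this entirely by a direct anchor: for even $i$ the axis crossing $\widetilde R^i_{p_n}$ is a point where $u_{p_n}=0$, and since $H_{p_n}\ge 0$ there one has $v_{p_n}^2(\widetilde R^i_{p_n})\ge (E_0^{p_n})^2\to\Omega-m$; hence $g(\widetilde R^i_0)\ge(\Omega-m)/2>0$, which forces $R^i_0<\widetilde R^i_0<R^{i+1}_0$ because $g(R^i_0)=g(R^{i+1}_0)=0$.

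Your strict-positivity argument is headed in the right direction (keep $\theta_{p_n}(r^+)$ uniformly away from $\pi/4\bmod\pi/2$ via $\theta'\ge\Theta$), but it is missing the one ingredient that converts an angular gap into a genuine lower bound on $v_{p_n}^2-u_{p_n}^2$: you must also bound $|v_{p_n}(r^+)|$ away from zero, otherwise the trajectory could shrink toward the origin with a fixed angle and still have $v^2-u^2\to 0$. This is exactly what Lemma~\ref{energielevel}\,\eqref{lem:point6} supplies (if $H\ge 0$ and $|u|\le\tan(\overline\theta)|v|$ then $|v|\ge\overline v>0$, uniformly in small $p$), and the paper invokes it explicitly; your citations of ``energy monotonicity'' and Lemma~\ref{estimationangulaire} do not provide this lower bound. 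Once you add that lemma, your compact-subinterval argument goes through and matches the paper's contradiction via inequalities on $\limsup/\liminf\,\theta_{p_n}(r^+)$.
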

\begin{proof}
We recall that  
\[
	E_0^p=(\Omega-m)^\frac{1}{2(1-p)}= \sup\{v| \exists u; H_p(u,v)=0\},
\]
 and $\underset{p\rightarrow0}{\lim}~E_0^p = \sqrt{\Omega-m}$. Thus, we obtain for all $i$ even, 
\[
	(v^2_{p_n}-u^2_{p_n})(\widetilde{R}^i_{p_n}) \geq (\Omega-m)/2
\] 
for $n$ big enough, so that 
\[
	g(\widetilde{R}^i_0)\geq (\Omega-m)/2.
\] 
This ensures that $R^i_0<\widetilde{R}^i_0< R^{i+1}_0$. We claim that $(R^i_0,R^{i+1}_0)\subset\{g>0\}. $ Let  $r^+\in(\widetilde{R}^i_0,R^{i+1}_0)$, we have
\begin{align*}
	&\theta_{p_n}(R^{i+1}_{p_n})-\theta_{p_n}(r^+)\geq\Theta(R^{i+1}_{p_n}-r^+),\\
	&\theta_{p_n}(r^+)-\theta_{p_n}(\widetilde{R}^i_0)\geq\Theta(r^+-\widetilde{R}^i_0),
\end{align*}
so
\begin{align}
	\label{lem:limangle1}&\underset{n\rightarrow+\infty}{\lim\sup}~\theta_{p_n}(r^+)<\underset{n\rightarrow+\infty}{\lim}~\theta_{p_n}(R^{i+1}_{p_n})\\
	\label{lem:limangle2}&\underset{n\rightarrow+\infty}{\lim\inf}~\theta_{p_n}(r^+)>\underset{n\rightarrow+\infty}{\lim}~\theta_{p_n}(\widetilde{R}^i_0).
	\end{align}
We also have
\[
	H_{p_n}(u_{p_n},u_{p_n})(r_+)\geq0
\]
so that by Point \eqref{lem:point6} of Lemma \ref{energielevel},
\[
	|v_{p_n}(r_+)|\geq C
\]
for some positive content $C$.
 We have by lemma \ref{convergencep} that  $(u_{p_n},v_{p_n})$ is a bounded sequence of $\mathcal{C}^0([r_0,\overline{R}]),$ and 
 \[
 	\underset{n\rightarrow\infty}{\lim}~v^2_{p_n}(r^+)-u^2_{p_n}(r^+) = g(r^+).
\]
Assume by the contradiction that $g(r_+)=0,$ then, up to extraction, we have
  \[
 	\underset{n\rightarrow\infty}{\lim}~v^2_{p_n}(r^+)=\underset{n\rightarrow\infty}{\lim}~u^2_{p_n}(r^+)\geq C^2
\] 
but, this is in contradiction with inequalities \eqref{lem:limangle1} and \eqref{lem:limangle2}. Thus, we get that
\[
	(\widetilde{R}^i_0,R^{i+1}_0)\subset\{g>0\}. 
\] 
The same argument works as well for $r^-\in (R^{i}_0,\widetilde{R}^i_0)$ and $(r_0,R^1_0).$ This gives us the lemma. 
\end{proof}
%%%%%%%%%%%%%%%%%%%%%%%%%%%%%%%%%%%%%%%%%%%%%%%%%%%%%%%%%%%%%%%%%%%%%%%%%%%%%%%%
%%%%%%%%%%%%%%%%%%%%%%%%%%%%     i odd
\begin{rem}
The  limiting function is more complicated to tackle on the intervals $(R^i_0,R^{i+1}_0)$ when $i$ is odd and different behaviors may occur when $H_{p_n}(R^{i+1}_{p_n})\geq 1/2$ or $1/2\geq H_{p_n}(R^{i}_{p_n})\geq 0.$
\end{rem}

Let us now consider the following mapping:
\[\begin{array}{cl}\Gamma: &(-\overline{p}/2,\overline{p}/2)\times(\sqrt{\Omega-m}/2,\infty)\rightarrow \mathbb{R}^3\\
	&(p,x)\longmapsto (p,u_{p,x}(r_0),v_{p,x}(r_0)),
\end{array}
\]
where $(u_{p,x},v_{p,x})$ is the solution of the problem (\ref{systequation:uv}$)_p$ with $x$ as initial condition. $\Gamma$ is an injective continuous map. We denote by $(x_n)$ the sequence of initial conditions related with $(u_{p_n},v_{p_n})$. We get by lemma \ref{estimationangulaire} that
\[
	v^2_{p_n}(r_0)\geq v^2_{p_n}(r_0)-u^2_{p_n}(r_0)\geq \alpha x_n^2
\]
so that, the sequence $(x_n)$ is bounded by lemma \ref{convergencep}. Up to extraction, we have
\[
	\underset{n\rightarrow+\infty}{\lim}~x_n=x\in \mathbb{R}_+.
\]
 The continuity of $\Gamma$ ensures, 
\[
	\underset{n\rightarrow+\infty}{\lim}~\Gamma(p_n,x_n)=\Gamma(0,x)
\]
and $x$ satisfies
\[
	(u_{0,x},v_{0,x})(r_0) = (u_0,v_0)(r_0)
\]
where $(u_0,v_0)$ comes from lemma \ref{convergencep}. $(u_{p_n},v_{p_n})$ converges to $(u_{0,x},v_{0,x})$ uniformly on $[0,r_0].$ Thus we get the theorem.
\end{proof}

%-------------------------------------------------------------------------------------------------------------------------------------------------
\appendix
\section*{Appendix}
We give here the proof of lemma \ref{energielevel}. The proof of the first four points is given in \cite{balabane1990}.
\begin{proof}
We have 
\[
	\underset{u^2+v^2 \rightarrow +\infty}{\lim\inf}~\frac{H(u,v)}{u^2+v^2}=\frac{\Omega-m}{2}
\] 
and this gives us the first two points. The fourth point is immediate. The proof of  the third one is straightforward and can be found in \cite{balabane1990}. We will now prove the fifth point. We denote
\[
	\overline{C}_{\gamma} = \sup\{U>0~|~ \exists v,~ H(\sqrt{U},v) = \gamma\}/\gamma
\]
for all $\gamma>0$. We have by definition,  for all $(u,v) \in \mathbb{R}^2$:
\[
	H(u,v) = \gamma \Rightarrow \overline{C}_{\gamma}H(u,v)\geq u^2.
\]
Since $H^{-1}(\{\gamma\})$ is compact, there exists $(u_0,v_0)\in H^{-1}(\{\gamma\})$ such that  $\gamma \overline{C}_{\gamma} = u_0^2$. Thanks to the symmetries of $H$, we can assume that $u_0,v_0\geq 0.$ We denote now 
\[
	E_\gamma:= \sup\{v:~\exists u, ~H(u,v)=\gamma\}.
\]
By the implicit function theorem, there exists a regular function 
\[
	u:v\in [0,E_\gamma)\mapsto u(v)\in\mathbb{R}_+
\] 
such that:
\[
	\{(u,v):~u,v>0,~H(u,v)=\gamma\}=\{(u(v),v):~v\in(0,E_\gamma)\}
\] 
and
\[
	\frac{d}{dv}u(v) = \frac{v[p|v^2-u^2|^{p-1}-(\Omega-m)]}{u[p|v^2-u^2|^{p-1}+(\Omega+m)]}.
\] 
$v\in(0,E_\gamma)$ satisfies $\frac{d}{dv}u(v)>0$ if and only if
\[
	|v^2-u^2|<\left(\frac{\Omega-m}{p}\right)^{\frac{1}{p-1}}
\]
so that, the function $v\mapsto u(v)$ has at most two local maxima  in $0$ and in $v_1$ defined by
\[
	v_1^2-u(v_1)^2=\left(\frac{\Omega-m}{p}\right)^{\frac{1}{p-1}}.
\]
We get 
\[
	\gamma= H(u(0),0)=\frac{1}{2}\left(u(0)^{2p}+(\Omega+m)u(0)^2\right)
\]
and 
\[
	\gamma=H(u(v_1),v_1) = -\frac{1-p}{2}\left(\frac{\Omega-m}{p}\right)^{\frac{p}{p-1}}+\Omega u(v_1)^2.
\]
We define now
\[
	C_\gamma^0 = u(0)^2/\gamma=\frac{2}{\frac{1}{u(0)^{2(1-p)}}+\Omega+m}
\]
and
\[
	C_\gamma^1=u(v_1)^2/\gamma= \frac{1}{\Omega}\left(1+\frac{(1-p)\left(\frac{\Omega-m}{p}\right)^{\frac{p}{p-1}}}{2\gamma}\right).
\]
It is straightforward to see that $\gamma\mapsto C^0_\gamma$ is a non-decreasing function whereas $\gamma\mapsto C^1_\gamma$ is non-increasing and 
\[
	\overline{C}_\gamma =\max\{C^0_\gamma,C^1_\gamma\}.
\]
We have that
\[
	\underset{\gamma\rightarrow +\infty}{\lim}~C^0_\gamma=\frac{2}{\Omega+m}
\]
so that defining
\[
	C_\gamma=\max\{\overline{C}_\gamma,\frac{2}{\Omega+m}\},
\]
we get that $\gamma\mapsto C_\gamma$ is a non-increasing function such that for every $\gamma >0$, $(u,v)\in H^{-1}([\gamma,+\infty)),$
\[
	C_{H(u,v)}H(u,v)\geq C_\gamma H(u,v)\geq u^2
\]
and
\[
	\underset{\gamma\rightarrow 0}{\lim}~C_\gamma=+\infty.
\]
Let us remark now that $H^{-1}((-\infty,\gamma))$ is a bounded open set for all $\gamma>0$, so we can define:
\[
	D_\gamma=\sup\{D>0:~B(0,\sqrt{D})\subset H^{-1}((-\infty,\gamma))\}\in(0,+\infty)
\]
where $B(0,r)$ is the euclidean ball of $\mathbb{R}^2$ of radius $r$. We immediately get that if $(u,v)$ satisfies $H(u,v)\geq \gamma>0$ then $(u,v)\notin B(0,\sqrt{D_\gamma})$ and this is the result.

Let us now prove the last point. Just as in the proof of the previous point, we can define thanks to the implicit function theorem, a regular function 
\[
	u:v\in(0,E_0)\mapsto u(v)\in\mathbb{R}^*_+
\]
such that
\[
	\{(u,v):~H(u,v)=0,~0<u,v\}=\{(u(v),v):~u\in(0,E_0)\}.
\]
This function is increasing on $(0,\overline{v})$ for 
\[
	\overline{v}^2=\left(\frac{p}{\Omega-m}\right)^{\frac{1}{1-p}}+\frac{1-p}{2\Omega}\left(\frac{p}{\Omega-m}\right)^{\frac{p}{1-p}}
\]
and decreasing on $(\overline{v},E_0).$ We define $\overline{\theta}\in(0,\pi/4)$ by
\[
	\tan(\overline{\theta})	=\frac{u(\overline{v})}{\overline{v}}.	
\] 
Let us define now
\[
	\Gamma:v\in(0,E_0]\mapsto\frac{u(v)}{v}\in[0,1).
\]
It is straightforward to see that the function $\Gamma$ is decreasing from $[\overline{v},E_0]$ in $[0,\overline{\theta}],$ one-to-one and onto. For any $\alpha\in[0,\overline{\theta}],$ the function
\[
	v\in\mathbb{R}^*_+\mapsto H(\tan(\alpha) v,v)
\] 
is strictly convexe,
\[
	\underset{v\rightarrow+\infty}{\lim}~H(\tan(\alpha) v,v)=+\infty~\text{and}~H(\tan(\alpha) v,v)\underset{v\rightarrow 0}{\sim}~-\frac{v^{2p}(1-\tan(\alpha)^2)^p}{2}
\]
so that, there is a unique $v_\alpha>0$ such that $H(\tan(\alpha) v_\alpha,v_\alpha)=0.$ We have $\Gamma(v_\alpha)=\tan(\alpha)$ so $v_\alpha\geq\overline{v}.$ We get also that if $(u,v)$ satisfies $H(u,v)\geq0$ and $|u|\leq \tan(\overline{\theta})|v|$ then there is a unique $\alpha\in[0,\overline{\theta}]$ such that $\tan(\alpha)=\frac{u}{v}$ and $v\geq v_\alpha\geq \overline{v}.$
Since 
\[
	\underset{p\rightarrow 0}{\lim}~\overline{v}^2=\frac{1}{2\Omega}
\]
and
\[
	\underset{p\rightarrow 0}{\lim}~\overline{v}^2-u(\overline{v})^2=0,
\]
we can choose smaller constants for $\overline{\theta}$ and $\overline{v}$ that do not depend on $p$.

\end{proof}
We prove now lemma \ref{estimationangulaire}.
\begin{proof}
We denote by $(u_{x},v_{x})$ the solution of \eqref{systequation:uv}$_p$. We begin as in \cite{balabane1990}. Let $r_0> \frac{1}{\Omega},$ 
\[
	\overline{R}_x = \sup\{r>0|v_x>|u_x|\}\in(0,+\infty]
\] 
and $S_x = \min(\overline{R}_x, r_0).$ For $r\in(0,S_x)$, we have:
\[\begin{array}{ll}
\frac{d}{dr}(v^2_x-u^2_x) 	&= 2(v'_xv_x-u'_xu_x) \\
					&= 4(\frac{u^2_x}{r}-\Omega u_x v_x) \\
					&\geq 4\Omega(u_x^2-v_x^2)-4(\Omega-\frac{1}{r_0}) u^2_x \\
					&\geq 4\Omega(u_x^2-v_x^2)-4(\Omega-\frac{1}{r_0}) x^2
\end{array}\]
because $S_x\leq r_0,\overline{R}_x.$
We get: 
\[\frac{d}{dr}(e^{4\Omega r}(v^2_x-u^2_x))+4(\Omega-\frac{1}{r_0}) x^2e^{4\Omega r} \geq 0 ~\mbox{on} ~(0,S_x).\]
and
\begin{align*}
v^2_x-u^2_x 	&\geq x^2\left(e^{-4\Omega r_0}(1+\frac{(\Omega-\frac{1}{r_0})}{\Omega})-\frac{(\Omega-\frac{1}{r_0})}{\Omega}\right)~\mbox{on} ~[0,S_x).
\end{align*}
We want to show that we can choose $r_0>\frac{1}{(1-p)(\Omega-m)}>\frac{1}{\Omega}$.
 We define  
 \[
 	g:(m,\infty)\times(0,1)\rightarrow \mathbb{R}
\]
by
\[\begin{array}{ll}
	g(\Omega,p)= \exp\left(-\frac{4\Omega}{(\Omega-m)(1-p)}\right)\left(1+\frac{p\Omega+(1-p)m}{\Omega}\right)-\frac{p\Omega+(1-p)m}{\Omega},
\end{array}\]
and 
\[f: p\in(0,1) \mapsto e^{-\frac{4}{1-p}}(1+p)-p\in\mathbb{R}.\] 
On one hand, for $p$ fixed, $\Omega\mapsto g(\Omega,p)$ is increasing and
\[\lim_{\Omega \rightarrow \infty}g(\Omega,p) = f(p), ~~ \lim_{\Omega \rightarrow m}g(\Omega,p) =-1.\]
On the other hand, $f$ is decreasing and \[\lim_{p\rightarrow 0}f(p) = e^{-4}>0, ~~ \lim_{p\rightarrow 1}f(p) = -1.\]
Thus, there exists a unique $\overline{p}\in (0,1)$ such that 
\[
	\forall p\in(0,\overline{p}),~ f(p)>f(\overline{p})=0,
\]
 and for  $p\in(0,\overline{p})$, a unique $\Omega_p>m$ such that 
 \[
 	\forall \Omega> \Omega_p,~ g(\Omega,p)>g(\Omega_p,p)=0.
\]
Finally, for $0<p<q<\overline{p},$ we have for all $\Omega >\Omega_q>m,$
\[ \frac{1}{(\Omega-m)(1-p)}<\frac{1}{(\Omega-m)(1-q)}=:r_0\]
$\alpha := g(\Omega,q)>g(\Omega_q,q)=0.$ Then, we get
\[v_{x,p}^2-u_{x,p}^2 \geq \alpha x^2\mbox{~ for all~} r\in[0,S_x).\]
This ensures that $S_x = r_0$ and that the first two points of lemma \ref{estimationangulaire} are true. The latter one is an easy consequence of lemma \ref{deriveeangulaire}.
\end{proof}

% ------------------------------------------------------------------------

\subsection*{Acknowledgment}
%Many thanks to our \TeX-pert for developing this class file.
This problem has been proposed by Patricio Felmer. The author would like to thank Patricio Felmer and Eric S\'{e}r\'{e} for useful discussions and helpful comments. This work was partially supported by the Grant ANR-10-BLAN 0101 of the French Ministry of research.

\bibliography{bibliographiebibdesk}
\bibliographystyle{plain}

\end{document}